\numberwithin{equation}{section}
\newtheorem{theorem}{Theorem}[section]
\newtheorem{lemma}[theorem]{Lemma}
\newtheorem{proposition}[theorem]{Proposition}
\newtheorem{corollary}[theorem]{Corollary}
\newtheorem{remark}[theorem]{Remark}
\newcommand{\bbN}{{\ensuremath{\mathbb N}} }
\newcommand{\bbR}{{\ensuremath{\mathbb R}} }
\newcommand{\bbZ}{{\ensuremath{\mathbb Z}} }
\newcommand{\cB}{{\ensuremath{\mathcal B}} }
\newcommand{\cC}{{\ensuremath{\mathcal C}} }
\newcommand{\cD}{{\ensuremath{\mathcal D}} }
\newcommand{\cF}{{\ensuremath{\mathcal F}} }
\newcommand{\cM}{{\ensuremath{\mathcal M}} }
\newcommand{\cU}{{\ensuremath{\mathcal U}} }
\newcommand{\cX}{{\ensuremath{\mathcal X}} }
\newcommand{\ga}{\alpha}
\newcommand{\gb}{\beta}
\newcommand{\gga}{\gamma}
\newcommand{\gd}{\delta}
\newcommand{\gep}{\varepsilon} 
\newcommand{\gt}{\theta}
\newcommand{\gl}{\lambda}
\newcommand{\gL}{\Lambda}
\newcommand{\gs}{\sigma}
\newcommand{\gS}{\Sigma}
\newcommand{\bfD}{\mathbf{D}}
\renewcommand{\tilde}{\widetilde}          
\DeclareMathSymbol{\leqslant}{\mathalpha}{AMSa}{"36} 
\DeclareMathSymbol{\geqslant}{\mathalpha}{AMSa}{"3E} 
\DeclareMathSymbol{\eset}{\mathalpha}{AMSb}{"3F}     
\newcommand{\dd}{\text{\rm d}}             
\newcommand{\suptwo}[2]{\sup_{\substack{#1 \\ #2}}} 
\newcommand{\inttwo}[2]{\int_{\substack{#1 \\ #2}}}     
\newcommand{\R}{\mathbb{R}}
\newcommand{\Z}{\mathbb{Z}}
\newcommand{\N}{\mathbb{N}}
\newcommand\bP{\ensuremath{\mathrm{P}}}
\newcommand\bE{\ensuremath{\mathrm{E}}}
\renewcommand{\epsilon}{\varepsilon}
\newcommand{\ind}{{\sf 1}}
\newenvironment{myenumerate}{
\renewcommand{\theenumi}{\arabic{enumi}}
\renewcommand{\labelenumi}{{\rm(\theenumi)}}
\begin{list}{\labelenumi}
{
\setlength{\itemsep}{0.4em}
\setlength{\topsep}{0.5em}
\setlength\leftmargin{2.45em}
\setlength\labelwidth{2.05em}
\setlength{\labelsep}{0.4em}
\usecounter{enumi}
}
}
{\end{list}
}
\renewenvironment{enumerate}{
\begin{myenumerate}}
{\end{myenumerate}}
\newcommand{\red}{\color{red}}
\newcommand{\beq}{\begin{equation}}
\newcommand{\eeq}{\end{equation}}
\newcommand{\ba}{\begin{aligned}}
\newcommand{\ea}{\end{aligned}}
\newcommand{\supp}{\mathrm{Supp}}
\newcommand{\Dtwo}{\mathbf{D}^{(2)}}
\begin{document}

\title[]{Strong Large Deviation principles for pair empirical measures of random walks in the Mukherjee-Varadhan topology}
\author{Dirk Erhard and Julien Poisat}
\date{today}

\begin{abstract}
In this paper we introduce a topology under which the pair empirical measure of a large class of random walks satisfies a strong Large Deviation principle. The definition of the topology is inspired by the recent article by Mukherjee and Varadhan~\cite{MV2016}. This topology is natural for translation-invariant problems such as the downward deviations of the volume of a Wiener sausage or simple random walk, known as the Swiss cheese model~\cite{BBH2001}. We also adapt our result to some rescaled random walks and provide a contraction principle to the single empirical measure despite a lack of continuity from the projection map, using the notion of diagonal tightness.
\end{abstract}

\thanks{\\
(D. Erhard) {\it Instituto de Matemática - Universidade Federal da Bahia}\\
(J. Poisat) {\it Université Paris-Dauphine, CNRS, UMR 7534, CEREMADE, PSL Research University, 75016 Paris, France}
}
\keywords{Large Deviation principles, empirical measure, occupation measure, random walk, compactification}
\subjclass[]{60F10 ; 60G37 ; 60J05 ; 54D35}
 
\date{\today}

\maketitle

\tableofcontents

\section{Introduction}
\label{sec:intro}
Let $X=(X_n)_{n\in \bbN_0}$ be a Markov chain on a Polish space $\gS$ with transition kernel $\pi$ which we assume to have a density $p(x,y)$ with respect to a reference measure $\gl(\dd y)$. In the seventies, Donsker and Varadhan~\cite[(II)]{DV75} showed that the empirical measure defined by
\beq
\label{def:empr-meas}
L_n = \frac1n \sum_{i=1}^n \gd_{X_i}
\eeq
satisfies a Large Deviation principle in the space of probability measures equipped with the weak topology, when $\gS$ is \emph{compact} and $p(x,y)$ is uniformly bounded from above and below. If the state space $\gS$ is \emph{not compact} (e.g.\ $\gS=\bbR^d$ equipped with the Euclidian distance), the upper bound holds for \emph{compact} sets rather than \emph{closed} sets, while the lower bound still holds for any open set under the following assumption: for all $x\in\gS$, for all measurable sets $A\subset \gS$ such that $\gl(A)>0$, there exists $k\in\bbN$ such that $\pi^k(x,A)>~0$, see~\cite[Corollary 3.4 and Equation (4.1)]{DV76}. 
In some cases, this \emph{weak} Large Deviation principle may be upgraded to a standard (or \emph{strong}) one, recovering the Large Deviation upper bound for all closed sets: see e.g.\ the uniformity assumption in~\cite[Corollary 6.5.10]{DemboZei10:book}, the compactness conditions in~\cite{BFG15} for continuous-time Markov chains, or~\cite{JJW24} for a more general class of processes that encompasses both discrete-time and continuous-time Markov chains. However such cases fail to include many natural examples. In applications, the lack of compactness may be dealt with by adding a confining drift to the Markov chain (or diffusion)~\cite{MR709647} or folding it on a large torus~\cite{B94,MR1434118,DV79}. Quite recently, Mukherjee and Varadhan~\cite{MV2016} proposed a new approach in which they embed the space of probability measures on $\bbR^d$ into a larger space equipped with a certain topology that makes it a \emph{compact} metric space. This approach is actually very much in spirit of the concept of concentration-compactness introduced by P. L. Lions in the 80's, see~\cite{PL1985}, which cures the lack of compactness coming from the action of a noncompact group action like for instance rescaling or translation. Under this new topology, Mukherjee and Varadhan were able to prove a \emph{strong} Large Deviation principle for the empirical measure of Brownian motion~\cite[Theorem 4.1]{MV2016}, which was then successfully applied to the so-called polaron problem in~\cite{BKM2017, KM2017, MV2016}, and also in~\cite{MV20,MV20b} where the approach from~\cite{MV2016} has been developed for level-3 large deviations. Recently, the compactification of measures has also proven fruitful in the context of directed polymers, see~\cite{BatCha20,BroMuk19}. The reader may refer to~\cite{DemboZei10:book,dH-book2000,DS,Var-LD2016} for an account on Large Deviation theory and to~\cite{Var18} for the role of topology in this theory.

\par In this paper we adapt and extend the work of Mukherjee and Varadhan in order to prove a \emph{strong} Large Deviation principle (LDP) for the \emph{pair} empirical measure of the Markov chain $(X_n)_{n\in\N_0}$, defined by
\beq
\label{def:pair-empr-meas}
L_n^{(2)} = \frac1n \sum_{i=1}^n \gd_{(X_{i-1}, X_{i})},
\eeq
in the case $\gS=\bbR^d$. Our work is motivated by the application of this LDP to the so-called \emph{Swiss cheese} problem, that is the (downward) Large Deviations for the volume of a Wiener sausage in $\bbR^d$ ($d\ge 2$), by van den Berg, Bolthausen and den Hollander~\cite{BBH2001}. In that paper (as well as in Phetpradap's Ph.D thesis~\cite{Phetpradap} for the discrete random walk counterpart a few years later) the authors used the aforementioned folding procedure on a large torus to deal with the lack of compactness of the state space. In a companion paper~\cite{EP2023}, we apply the strong LDP for the pair empirical measure to the Swiss cheese problem in order to obtain the so-called \emph{tube property}.  The latter essentially means that, conditioned to the downward deviations of the random walk volume, the empirical measure of a certain  random walk skeleton introduced in~\cite{BBH2001} is close to one of the minimizers of the variational problem in the corresponding rate function.

\par The present paper is organized as follows. In Section~\ref{sec:topology} we introduce the relevant  notation and topology. Although there is a lot in common with~\cite{MV2016}, let us stress that this is \emph{not} the Mukherjee-Varadhan topology applied to the product space $\bbR^d\times \bbR^d$ instead of $\bbR^d$, see also Remark~\ref{rmk:not-the-same} below. Our main result,  that is the Large Deviation principle for the pair empirical measure of a large class of random walks, in this new topology, is stated in Theorem~\ref{thm:LDPpair} of Section~\ref{sec:LDP}. Lower semi-continuity of the rate function is proven in Section~\ref{sec:lsc}. The lower and upper bounds of the LDP are proven in Sections~\ref{sec:lb} and~\ref{sec:ub}, respectively.  We shall use therein the well-known fact that $(X_n,X_{n+1})_{n\in\bbN_0}$ is itself a Markov chain. In Section~\ref{sec:adapt} we adapt our result to the case of certain rescaled random walks. Finally, in Section~\ref{sec:contraction} we explain how to use~Theorem~\ref{thm:LDPpair} to obtain a Large Deviation principle for the (single) empirical measure. This part is non-trivial since, due to a lack of continuity from the projection map, the standard contraction principle is not applicable. We use the new notion of {\it diagonal tightness} to overcome this difficulty. In order to make the paper more self-contained, we include a condensed presentation of the Mukherjee-Varadhan topology in Appendix~\ref{sec:a}.
\section{Topology on the space of probability measures modulo shifts}
\label{sec:topology}
Throughout the paper, we equip $\bbR^d$ with any of its equivalent norms, further denoted by $|\cdot|$, while $\bbR^d\times \bbR^d$ is equipped with the product norm $||(x,y)|| = |x|\vee |y| = \max(|x|, |y|)$.
Let $\cM_1^{(2)} := \cM_1(\bbR^d\times \bbR^d)$ be the space of probability measures on $\bbR^d\times \bbR^d$ and $\cM_{\le1}^{(2)} := \cM_{\le1}(\bbR^d\times \bbR^d)$ be the space of sub-probability measures on $\bbR^d\times \bbR^d$. We consider the action of the \emph{diagonal} shifts $\gt_{x,x}$ for $x\in\bbR^d$ , defined by:
\beq
\int_{\bbR^d\times \bbR^d} f(u,v) (\gt_{x,x} \nu)(\dd u,\dd v) = \int_{\bbR^d\times \bbR^d} f(u+x,v+x)  \nu(\dd u,\dd v)
\eeq
for all continuous bounded functions $f\colon \bbR^d\times \bbR^d \mapsto \bbR$ and $\nu\in \cM_{\le1}^{(2)}$. We shall denote by $\tilde \cM_1^{(2)}$ (resp. $\tilde \cM_{\le 1}^{(2)}$) the space of equivalence classes of $\cM_1^{(2)}$ (resp. $\cM_{\le 1}^{(2)}$) under the collection of shifts $\gt_{x,x}$. For $k\ge 2$, we define $\cF_k$ as the space of continuous functions $f\colon (\bbR^d)^k \mapsto \bbR$ that are translation invariant, i.e.
\beq
f(x_1+x, \ldots, x_k +x) = f(x_1,\ldots, x_k), \quad \forall x,x_1,\ldots, x_k \in \bbR^d,
\eeq
and {\it vanishing at infinity}, in the sense that
\beq
\lim_{\max_{i\neq j} |x_i-x_j|\to \infty} f(x_1, \ldots, x_k) = 0.
\eeq
 Let us define $\cF_k^{(2)}= \cF_{2k}$ for $k\ge 1$. When $f\in \cF_k^{(2)}$ and $\ga \in \cM_{\le 1}^{(2)}$, we write
\beq
\label{eq:defLambda}
\gL(f, \ga) := \int f(u_1, v_1,\ldots, u_{k}, v_{k}) \prod_{1\le i\le k} \ga(\dd u_i, \dd v_i),
\eeq
which only depends on the orbit $\tilde \ga$.
\begin{remark}
\label{rmk:not-the-same}
Note that the space  $\tilde \cM_1^{(2)}$ defined here is different from $\tilde \cM_1(\R^{2d})$ defined in \cite{MV2016}. Indeed, in~\cite{MV2016} the shifts are with respect to all directions in $\R^{2d}$ whereas here they are only with respect to all directions of the form $(x,x)\in \R^{2d}$. This is very natural in view of our application to the pair empirical measure.
\end{remark}
\begin{remark}
\label{rmk:not-the-same2}
There is another natural choice for the set of test functions, which would lead to the definition of an a priori \emph{stronger} topology. We could say that $f$ (with an even number of arguments) is \emph{vanishing at infinity} if
\beq
\lim_{\max_{i\neq j} \|(u_i,v_i)-(u_j,v_j)\|\to \infty} f(u_1, v_1, \ldots, u_k, v_k) = 0.
\eeq
Note that this is a larger set of test functions. Indeed, if we write $(x_1, x_2, \ldots, x_{2k})=(u_1, v_1, \ldots, u_k, v_k)$ then
\beq
\max_{i\neq j} |x_i-x_j| \ge
\max_{i\neq j} (|u_i-u_j| \vee |v_i-v_j|)= \max_{i\neq j} \|(u_i,v_i)-(u_j,v_j)\|.
\eeq%
The test function we exhibit in Remark~\ref{rmk:not-the-same3} below shows that the inclusion is actually strict.
\end{remark}
\subsection{Vague and weak convergence}
Let us recall that a sequence of sub-probability measures $(\mu_n)_{n\in \N_0}$ on $\bbR^d$ is said to converge vaguely (resp.\ weakly) to $\mu$ if for every continuous and compactly supported (resp.\ continuous and bounded) function $f\colon \bbR^d \to \bbR$,
\beq
\lim_{n\to \infty} \int f(x) \mu_n(\dd x) = \int f(x) \mu(\dd x).
\eeq
Weak convergence clearly implies vague convergence but the reverse implication is false~\cite[Chapter 5, Section 28]{BillingBookPM}. However, the following lemma will be useful for later purposes:%
\begin{lemma}[\bf See Lemma 2.2 in~\cite{MV2016}]
\label{lem:vagueweak}
Consider a sequence $(\mu_n)_{n\in \N_0}$ of sub-probability measures in $\R^d$ that converges vaguely to some sub-probability measure $\alpha$. Then we can write $\mu_n= \alpha_n + \beta_n$ for every $n\in\N_0$, where $(\alpha_n)_{n\in\N_0}$ converges weakly to $\alpha$ and $(\beta_n)_{n\in\N_0}$ converges vaguely to zero. Moreover, for any $n\in\N$ there exists a positive number $R_n$ such that for any measurable set $A$ one has $\alpha_n(A)=\mu_n(A\cap B(0,R_n))$ and $\beta_n(A)=\mu_n(A\cap B(0,R_n)^c)$, where $B(0,R_n)$ is the closed ball in $\bbR^d$ centered at the origin with radius $R_n$. Furthermore, the sequence $(R_n)_{n\in\N}$ can be chosen to be increasing and tending to infinity.
\end{lemma}
The last sentence is the only part that was not explicitly mentioned in~\cite[Lemma 2.2]{MV2016} but it is derived in the proof given therein. We recall that the measure $\alpha_n$ is defined in~\cite{MV2016} as the restriction of $\mu_n$ to $B(0,R_n)$ and $\beta_n$ as the restriction of $\mu_n$ to $B(0,R_n)^c$, where $(R_n)_{n\in\N_0}$ is an appropriately chosen increasing sequence which converges to infinity and satisfies
\begin{equation}
	\mu_n(B(0,R_n)) \leq \alpha(\R^d) +\frac{1}{R_n}\,.
\end{equation}
\subsection{Widely separated sequences} We say that two sequences $(\ga_n)$ and $(\gb_n)$ in $\cM_{\le 1}^{(2)}$ are {\it widely separated} if, for some positive function $V$ in $\cF_2^{(2)}$,
\beq
\label{eq:wide-sep}
\lim_{n\to \infty} \int_{(\bbR^d)^4} V(u_1, v_1, u_2, v_2) \ga_n(\dd u_1, \dd v_1) \gb_n(\dd u_2, \dd v_2) = 0.
\eeq

\begin{remark}
\label{rmk:wide-sep}
It is clear from~\eqref{eq:wide-sep} that if $(\ga_n)$ and $(\gb_n)$ are widely separated and if $\gb_n = \gb_n^{(1)}+\gb_n^{(2)}$, where $\gb_n^{(1)}$ and $\gb_n^{(2)}$ are sub-probability measures, then also $(\ga_n)$ and $(\gb_n^{(i)})$ are widely separated, for every $i\in\{1,2\}$.
\end{remark}

\begin{remark}
\label{rmk:not-the-same3} There exist $(\ga_n)$ and $(\gb_n)$ that are widely separated for our choice of test functions but not widely separated for the larger set of test functions mentioned in Remark~\ref{rmk:not-the-same2}. Indeed, let $\ga_n$ be a Dirac mass at $(u(n), v(n))\in \bbR^d\times \bbR^d$, where $|u(n)-v(n)|\to \infty$. It is clear that $(\ga_n)$ is widely separated (in our sense) from any other sequence (including itself). Now, consider the bounded continuous function
\beq
V(u_1, v_1, u_2, v_2) = \frac{1}{(1+ |u_1- u_2|)(1+ |v_1- v_2|)}.
\eeq
The function $V$ is not vanishing at infinity in our sense, since $V(u_1, v_1, u_2, v_2) = V(u_1+x, v_1, u_2+x, v_2)$ for every $x\in\bbR^d$, but it is vanishing at infinity in the less restrictive sense of Remark~\ref{rmk:not-the-same2} and
\beq
\int V(u_1, v_1, u_2, v_2) \ga_n(\dd u_1, \dd v_1) \ga_n(\dd u_2, \dd v_2) = 1
\eeq
does not converge to zero.
\end{remark}
The following lemma, which mimicks~\cite[Lemma 2.4]{MV2016}, lists the most important properties of widely separated sequences of sub-probability measures.
\begin{lemma}\label{lem:widely-sep}
Let $(\alpha_n)$ and $(\beta_n)$ be two widely separated sequences in $\cM_{\le 1}^{(2)}$. Then, 
\begin{enumerate}
	\item For every $W\in \cF_2^{(2)}$,
	\beq
	\lim_{n\to\infty} \int W(u_1, v_1, u_2, v_2) \alpha_n(\dd u_1, \dd v_1)\, \beta_n(\dd u_2, \dd v_2) = 0\,.
	\eeq
	\item For every $k\geq 2$ and every $f\in \cF_k^{(2)}$,
	\beq
	 \lim_{n\to \infty} |\gL(f, \ga_n + \gb_n)-\gL(f, \ga_n)-\gL(f,\gb_n)| = 0.
	\eeq	
\end{enumerate}
\end{lemma}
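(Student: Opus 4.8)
The plan is to follow the proof of \cite[Lemma 2.4]{MV2016}, the only real addition being a multilinearity expansion over subsets in part (2). The key preliminary fact I would isolate is the following: \emph{if $V\in\cF_4$ is positive then, for every $M>0$, there is a constant $c_M>0$ with $V(y_1,y_2,y_3,y_4)\ge c_M$ whenever $\max_{a\ne b}|y_a-y_b|\le M$}. Indeed, by translation invariance $V(y_1,y_2,y_3,y_4)=V(0,y_2-y_1,y_3-y_1,y_4-y_1)$, and the set of admissible triples $(y_2-y_1,y_3-y_1,y_4-y_1)\in(\bbR^d)^3$ has compact closure, on which the positive continuous function $V$ attains a positive minimum. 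I also note that any function in $\cF_4$ or $\cF_{2k}$, being continuous and vanishing at infinity, is bounded.

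\emph{Part (1).} Let $W\in\cF_4$, let $V$ be a positive element of $\cF_4$ witnessing the wide separation of $(\ga_n)$ and $(\gb_n)$, write $\mu_n:=\ga_n\otimes\gb_n$ for the associated product measure on $(\bbR^d)^4$, and fix $\gep>0$. Choose $M$ so large that $|W|<\gep$ on $\{\max_{a\ne b}|y_a-y_b|>M\}$ and let $D_M$ be the complement of that set. Since $V\ge c_M$ on $D_M$ and $V\ge 0$ everywhere, one has $\one_{D_M}\le V/c_M$ pointwise, whence
\[
\Bigl|\int W\,\dd\mu_n\Bigr|\le \gep\,\mu_n\bigl((\bbR^d)^4\bigr)+\|W\|_\infty\,\mu_n(D_M)\le \gep+\frac{\|W\|_\infty}{c_M}\int V\,\dd\mu_n ,
\]
using that $\mu_n$ has total mass at most $1$. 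By \eqref{eq:wide-sep} the last integral tends to $0$, so $\limsup_n|\int W\,\dd\mu_n|\le\gep$; letting $\gep\downarrow 0$ proves (1).

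\emph{Part (2).} By multilinearity, $\gL(f,\ga_n+\gb_n)=\sum_{S\subseteq\{1,\dots,k\}}I_n(S)$, where $I_n(S):=\int f\,\prod_{i\in S}\ga_n(\dd u_i,\dd v_i)\,\prod_{i\notin S}\gb_n(\dd u_i,\dd v_i)$; the terms $S=\{1,\dots,k\}$ and $S=\eset$ equal $\gL(f,\ga_n)$ and $\gL(f,\gb_n)$, so it remains to show $I_n(S)\to 0$ for each of the finitely many $S$ with $\eset\ne S\subsetneq\{1,\dots,k\}$. Fix such an $S$, pick $i_0\in S$ and $j_0\notin S$, fix $\gep>0$, and choose $M$ with $|f|<\gep$ whenever the largest pairwise distance among the $2k$ coordinates exceeds $M$. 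On the complementary region the four coordinates $u_{i_0},v_{i_0},u_{j_0},v_{j_0}$ have pairwise distances at most $M$, so there the indicator of that region is bounded by $V(u_{i_0},v_{i_0},u_{j_0},v_{j_0})/c_M$; bounding $|f|$ by $\|f\|_\infty$ on that region and integrating out the $2k-4$ coordinates with indices in $S\setminus\{i_0\}$ and $S^c\setminus\{j_0\}$ (each contributing a mass factor $\le 1$, which is legitimate since $V\ge 0$) gives
\[
|I_n(S)|\le\gep+\frac{\|f\|_\infty}{c_M}\int V(u_{i_0},v_{i_0},u_{j_0},v_{j_0})\,\ga_n(\dd u_{i_0},\dd v_{i_0})\,\gb_n(\dd u_{j_0},\dd v_{j_0}).
\]
By part (1) (applied with $W=V$) the integral tends to $0$, so $\limsup_n|I_n(S)|\le\gep$, and letting $\gep\downarrow 0$ gives $I_n(S)\to 0$; summing over the finitely many mixed subsets $S$ yields (2). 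The only mildly delicate point in the whole argument is the uniform lower bound $c_M>0$ — this is exactly where the positivity and translation invariance of $V$ enter — while the remainder is routine bookkeeping with sub-probability masses.
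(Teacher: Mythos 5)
Your proof is correct and follows essentially the same route as the paper: in both parts the key step is that $V$, being positive, continuous and translation invariant, is bounded below by a positive constant on the region where all pairwise distances are at most $M$, while $W$ (resp.\ $f$) is smaller than $\gep$ off that region; your split of the integral over the two regions is just the integrated form of the paper's pointwise bound $|W(0,v_1,u_2,v_2)|\le C_\gep V(0,v_1,u_2,v_2)+\gep$. The explicit multilinear expansion over subsets $S$ and the choice of a pair $i_0\in S$, $j_0\notin S$ in part (2) is exactly what the paper compresses into ``w.l.o.g.\ any cross-term may be written as\dots''.
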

\begin{proof}[Proof of Lemma~\ref{lem:widely-sep}]
Let $W\in \cF_2^{(2)}$ be arbitrary and let $V\in\cF_2^{(2)}$ be the positive function from~\eqref{eq:wide-sep}. Then, since $V(0,v_1, u_2, v_2)$ is bounded from below by a positive constant on compact sets, for every $\gep >0$ there exists a constant $C_\gep$ such that for any $v_1, u_2, v_2\in\bbR^d$,
\beq
|W(0,v_1, u_2, v_2)| \leq C_\gep V(0,v_1, u_2, v_2)+\gep\,.
\eeq
Thus
\beq
\ba
\limsup_{n\to \infty} &\int |W(u_1, v_1, u_2, v_2)| \alpha_n(\dd u_1, \dd v_1)\beta_n(\dd u_2, \dd v_2)\\
&\leq C_\gep\limsup_{n\to\infty} \int V(0, v_1-u_1, u_2-u_1, v_2-u_1) \alpha_n(\dd u_1, \dd v_1)\beta_n(\dd u_2, \dd v_2) + \gep\\
&=\gep\,,
\ea
\eeq
and (1) follows.
To see that (2) holds as well, we first note that the case $k=2$ is a direct consequence of the first part of the lemma. The case $k\ge 3$ follows easily: w.l.o.g, any cross-term in the expansion of $\gL(f,\ga_n+\gb_n)$ may be written as
\beq
\int f(u_1,v_1, u_2, v_2, \ldots, u_k, v_k) \ga_n(u_1,v_1) \gb_n(u_2,v_2) \prod_{3\le i\le k} \gga_{n,i}(u_i,v_i),
\eeq
where $\gga_{n,i}$ is either $\ga_n$ or $\gb_n$. Using translation invariance and repeating the argument used at the beginning of the proof, we see that for every $\gep>0$, there exists $C_\gep>0$ such that for all $v_1, u_2, v_2, \ldots, u_k, v_k\in \bbR^d$,
\beq
|f(0,v_1, u_2, v_2, \ldots, u_k, v_k)| \le C_\gep V(0,v_1,u_2,v_2) + \gep,
\eeq
which allows to conclude.
\end{proof}

Here is a sufficient condition for two sequences of measures to be widely separated.
\begin{lemma}\label{lem:wid-sep-suff}
Let $(\ga_n)$ and $(\gb_n)$ be two sequences in $\cM_{\le 1}^{(2)}$. If $(\ga_n)$ is tight and $(\gb_n)$ converges vaguely to zero, then they are widely separated.
\end{lemma}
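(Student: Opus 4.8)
The plan is to exhibit an explicit positive function $V\in\cF_4$ witnessing~\eqref{eq:wide-sep}. The natural candidate is a function of the separation between the two pairs, built to decay when the blocks $(u_1,v_1)$ and $(u_2,v_2)$ drift apart. Concretely, I would set
\beq
V(u_1,v_1,u_2,v_2) = \psi\big(|u_1-u_2|\big),
\eeq
where $\psi\colon[0,\infty)\to(0,\infty)$ is continuous, strictly positive, and $\psi(r)\to 0$ as $r\to\infty$ (for instance $\psi(r)=e^{-r}$ or $\psi(r)=(1+r)^{-1}$). Translation invariance in the sense required for $\cF_4$ is immediate since $V$ depends only on the difference $u_1-u_2$; and $V$ vanishes at infinity in the required sense because, among the six pairwise distances $|x_i-x_j|$ with $x=(u_1,v_1,u_2,v_2)$, the relevant one is controlled: more carefully, one should check that $\max_{i\ne j}|x_i-x_j|\to\infty$ forces $|u_1-u_2|\to\infty$ \emph{along sequences where $V$ is bounded away from $0$}, which is not literally true as stated (e.g.\ $v_1$ could run off alone). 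To fix this cleanly, take instead $V(u_1,v_1,u_2,v_2)=\psi\big(|u_1-u_2|\big)\,\phi\big(|u_1-v_1|\big)\,\phi\big(|u_2-v_2|\big)$ with $\phi$ continuous, strictly positive, bounded, and $\phi(r)\to 0$ as $r\to\infty$; then $V>0$ everywhere and $V\to 0$ whenever any pairwise distance blows up, so $V\in\cF_4$.

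Next I would verify~\eqref{eq:wide-sep} with this $V$. Fix $\gep>0$. By tightness of $(\ga_n)$ there is a compact $K=B(0,R)\subset\bbR^d$ with $\ga_n\big(K\times\bbR^d\big)\ge \ga_n(\bbR^d\times\bbR^d)-\gep$ for all $n$, and in particular $\ga_n$ restricted to $\{|u_1|>R\}$ has total mass at most $\gep$. On the complementary part $\{|u_1|\le R\}$, write
\beq
\int V\,\ga_n\,\gb_n \le \|\phi\|_\infty^2 \int_{|u_1|\le R} \Big(\int_{\bbR^d\times\bbR^d}\psi\big(|u_1-u_2|\big)\,\gb_n(\dd u_2,\dd v_2)\Big)\ga_n(\dd u_1,\dd v_1) + \gep\,\|\psi\|_\infty\|\phi\|_\infty^2.
\eeq
For the first term, the inner integral is the integral against $\gb_n$ of the function $(u_2,v_2)\mapsto \psi(|u_1-u_2|)$, which is bounded, continuous, and vanishes as $|u_2|\to\infty$ uniformly for $|u_1|\le R$; hence it is a single fixed function in $C_0(\bbR^d\times\bbR^d)$ up to the uniform-in-$u_1$ control. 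Since $\gb_n\to 0$ vaguely, $\int\psi(|u_1-u_2|)\,\gb_n(\dd u_2,\dd v_2)\to 0$ for each fixed $u_1$; combined with the uniform bound $\psi(|u_1-u_2|)\le\psi_*(|u_2|)$ for $|u_1|\le R$, where $\psi_*\in C_0$, and the fact that $\ga_n$ has total mass $\le 1$, dominated convergence (or a direct $\gep/2$ splitting of $\bbR^d\times\bbR^d$ into a large ball and its complement) gives that the first term tends to $0$. Letting $n\to\infty$ and then $\gep\to 0$ yields~\eqref{eq:wide-sep}.

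The main obstacle is a subtle one: vague convergence $\gb_n\to 0$ gives convergence of $\int g\,\gb_n$ only for $g\in C_0$, so I must genuinely argue the \emph{uniformity} in the "base point'' $u_1$ ranging over the compact set $\{|u_1|\le R\}$ — i.e.\ that $\sup_{|u_1|\le R}\int\psi(|u_1-u_2|)\,\gb_n(\dd u_2,\dd v_2)\to 0$. This follows by covering $\{|u_1|\le R\}$ by finitely many balls of radius $\gd$, using the uniform continuity of $\psi$ to replace $u_1$ by a net point up to error $o_\gd(1)$, and applying vague convergence at the finitely many net points; the error terms are handled by the tail bound $\psi(|u_1-u_2|)\le\psi_*(|u_2|)$ and the finite (indeed $\le 1$) total mass of $\gb_n$. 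This is exactly the mechanism underlying Lemma~\ref{lem:widely-sep}(1) read in reverse, so the argument is short once the function $V$ is correctly chosen.
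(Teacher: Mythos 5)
Your choice of witness $V(u_1,v_1,u_2,v_2)=\psi(|u_1-u_2|)\,\phi(|u_1-v_1|)\,\phi(|u_2-v_2|)$ is indeed a positive element of $\cF_4$, and writing one down explicitly is a worthwhile detail that the paper's proof leaves implicit. However, the verification of~\eqref{eq:wide-sep} has a genuine gap. Once you bound $\phi(|u_1-v_1|)\phi(|u_2-v_2|)\le\|\phi\|_\infty^2$, the remaining integrand $(u_2,v_2)\mapsto\psi(|u_1-u_2|)$ does not decay as $|v_2|\to\infty$, so it is neither compactly supported nor in $C_0(\bbR^d\times\bbR^d)$, and vague convergence of $\gb_n$ on $\bbR^{2d}$ tells you nothing about $\int\psi(|u_1-u_2|)\,\gb_n(\dd u_2,\dd v_2)$. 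Your claim that this integral tends to $0$ for fixed $u_1$ is false in general: take $\ga_n=\gd_{(0,0)}$ and $\gb_n=\gd_{(0,n e_1)}$ with $e_1$ a unit vector; then $\gb_n\to 0$ vaguely, yet your ``first term'' equals $\|\phi\|_\infty^2\,\psi(0)>0$ for every $n$. (The lemma survives this example only because the factor $\phi(|u_2-v_2|)=\phi(n)$ that you discarded is exactly what tends to zero.) The ``main obstacle'' you single out --- uniformity in $u_1$ over a compact set --- is real but minor and correctly handled by your covering argument; the obstacle you do not address is the absence of decay of your test function in the $v_2$ direction.

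The repair is the route the paper takes: keep the full function $V$, use tightness of $(\ga_n)$ on $\bbR^{2d}$ to confine \emph{both} $u_1$ and $v_1$ to a ball of radius $M$ up to mass $\gep$, and then split the $(u_2,v_2)$-integration into the compact box $\{|u_2|\le 2M,\ |v_2|\le 2M\}$ and its complement. On the box the domain is compact in $\bbR^{2d}$ and $V$ is bounded, so vague convergence of $(\gb_n)$ makes that contribution vanish as $n\to\infty$. On the complement, with $|u_1|,|v_1|\le M$, either $|u_2|>2M$ (so $|u_1-u_2|>M$) or $|v_2|>2M$ (so $|u_1-v_2|>M$); hence some pairwise distance exceeds $M$ and $V$ itself is uniformly small for $M$ large, since $V$ vanishes at infinity. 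The point is that one must exploit the decay of $V$ jointly in all variables before discarding any factor, rather than reducing to a marginal of $\gb_n$, whose vague convergence to zero is not guaranteed.
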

\begin{proof}[Proof of Lemma~\ref{lem:wid-sep-suff}]
Let $V\in \cF_2^{(2)}$ and $\gep>0$. By tightness of $(\ga_n)$ and boundedness of $V$, there exists $M>0$ such that  for all $n\ge 1$,
\beq
\int V(u_1,v_1,u_2, v_2) \ga_n(\dd u_1, \dd v_1) \gb_n(\dd u_2, \dd v_2) \le \int_{|u_1|, |v_1|\le M} (\ldots) + \gep \|V\|_{\infty}.
\eeq
Here, $(\ldots)$ stands of course for $V(u_1,v_1,u_2, v_2) \ga_n(\dd u_1, \dd v_1) \gb_n(\dd u_2, \dd v_2)$.
We further split the integral on the right-hand side as
\beq
\inttwo{|u_1|, |v_1|\le M}{|u_2|,|v_2|\le 2M} (\ldots) + \inttwo{|u_1|, |v_1|\le M}{ |u_2|\vee|v_2|> 2M} (\ldots)\ .
\eeq
We claim that we can make the second term smaller than $\gep$ by choosing $M$ even larger if necessary. Indeed, since $|u_2-u_1|\vee |v_2-v_1|\ge M$ on the domain of integration and $V$ is vanishing at infinity the claim follows. As for the first term, it goes to zero as $n$ goes to infinity since $V$ is bounded and $(\gb_n)$ converges vaguely to zero.
\end{proof}
We close this section with a property that was used without proof in the original paper~\cite{MV2016} and that will be useful in the sequel. We provide a proof here:
\begin{lemma}
\label{lem:wid-sep-div-seq}
Let $(\ga_{n}^{(1)})$ and $(\ga_{n}^{(2)})$ be two sequences in $\cM_{\le 1}^{(2)}$ that are widely separated.
If there exist two $\bbR^d$-valued sequences $(a_{n}^{(1)})$ and $(a_{n}^{(2)})$ such that $\ga_{n}^{(i)} \star \gd_{(a_{n}^{(i)},a_{n}^{(i)})}$ converges weakly to some \emph{non-zero} $\ga_i\in\cM_{\le 1}$ for every $i\in\{1,2\}$, then $|a_{n}^{(1)}- a_{n}^{(2)}|$ diverges.
\end{lemma}
\begin{proof}[Proof of Lemma~\ref{lem:wid-sep-div-seq}]
For ease of notation, write $b_n := a_{n}^{(1)}-a_{n}^{(2)}$ and $\hat \ga_{n}^{(i)} := \ga_{n}^{(i)} \star \gd_{(a_{n}^{(i)},a_{n}^{(i)})}$ for every $i\in\{1,2\}$. Let $W$ be a \emph{positive} function in $\cF_2^{(2)}$. By translation invariance of $W$, we have
\beq
\label{eq:wid-sep-div-seq}
\ba
&\int W(u_1, v_1, u_2, v_2) \ga_{n}^{(1)}(\dd u_1, \dd v_1) \ga_{n}^{(2)}(\dd u_2, \dd v_2)\\
\qquad &=
\int W(u_1, v_1, u_2 + b_n, v_2+b_n) \hat\ga_{n}^{(1)}(\dd u_1, \dd v_1) \hat\ga_{n}^{(2)}(\dd u_2, \dd v_2).
\ea
\eeq
Assume, by contradiction, that $(b_n)$ does \emph{not} diverge. Then, there exists $R>0$ such that $|b_n|\le R$ along a subsequence. Additionally, we may assume (possibly enlarging $R$) that
\beq
\ga_i(B((0,0),R)) >0, \qquad i\in\{1,2\},
\eeq
where $B((0,0),R)$ is the closed ball of $\bbR^d \times \bbR^d$ equipped with the product norm $\|\cdot\|$ defined at the beginning of Section~\ref{sec:topology}.
Restricting the integral in the r.h.s.\ of~\eqref{eq:wid-sep-div-seq} to $|u_1|\vee |u_2| \vee |v_1|\vee |v_2|\le R$, we obtain (along a subsequence)
\beq
\liminf_{n\to \infty} {\rm \eqref{eq:wid-sep-div-seq}} \ge \Big(\inf_{B((0,0),R)\times B((0,0),2R)} W \Big)\times \prod_{i\in\{1,2\}}\ga_i(B((0,0),R)) >0,
\eeq
which contradicts the fact that $(\ga_{n}^{(1)})$ and $(\ga_{n}^{(2)})$ are widely separated.
\end{proof}
\subsection{Totally disintegrating sequences} From now on and unless stated otherwise, we denote, for every $(x,y)\in \bbR^d\times \bbR^d$ and $r>0$,
\beq
B((x,y), r) := \{(u,v)\in \bbR^d\times \bbR^d \colon |u-x| \vee |v-y| < r\}.
\eeq
We say that a sequence $(\mu_n)$ in $\cM_{\le 1}^{(2)}$ is {\it totally disintegrating} if, for every $r>0$,
\beq
\label{eq:total-dis}
\lim_{n\to\infty} \sup_{x\in\bbR^d} \mu_n(B((x,x), r)) = 0.
\eeq
Following~\cite[Lemma 2.3]{MV2016}, we obtain the following result. 
\begin{lemma}\label{lem:total-dis}
The sequence $(\mu_n)$ in $\cM_{\le 1}^{(2)}$ is totally disintegrating iff one of the following equivalent statements holds:
\begin{enumerate}
	\item There exists a positive $V\in\cF_2^{(2)}$ such that $\lim_{n\to\infty} \gL(V,\mu_n)= 0$.
	\item For any $V\in\cF_2^{(2)}$, $\lim_{n\to\infty} \sup_{x,y\in\bbR^d} \int V(x,y,u,v) \mu_n(\dd u ,\dd v)=0$.
	\item For any $V\in\cF_2^{(2)}$, $\lim_{n\to\infty} \gL(V,\mu_n)= 0$.
\end{enumerate}
\end{lemma}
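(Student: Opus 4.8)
The plan is to prove the cycle of implications $(1)\Rightarrow(\mathrm{TD})\Rightarrow(2)\Rightarrow(3)\Rightarrow(1)$, where $(\mathrm{TD})$ abbreviates ``totally disintegrating''; since $(\mathrm{TD})$ is by definition quantified over all $r>0$, this cycle yields all the claimed equivalences at once. Two elementary facts will be used throughout. First, every $V\in\cF_4$ is bounded: translation invariance lets one write $V(x_1,\dots,x_4)=V(0,x_2-x_1,x_3-x_1,x_4-x_1)$, a continuous function on $(\bbR^d)^3$ vanishing at infinity, hence bounded. Second, for the $\ell_\infty$-balls one has $B((x,x),r)\times B((x,x),r)\subseteq B\big((x,x,x,x),r\big)$ inside $(\bbR^d)^4$, so that a point of the left-hand set, after subtracting $(x,x,x,x)$, falls into the compact set $\overline{B(0,r)}\subseteq(\bbR^d)^4$.

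For $(1)\Rightarrow(\mathrm{TD})$, let $V\in\cF_4$ be positive (i.e.\ $V>0$) with $\gL(V,\mu_n)\to0$, and fix $r>0$. On the event $\{(u_1,v_1)\in B((x,x),r)\}\times\{(u_2,v_2)\in B((x,x),r)\}$, translation invariance and the second fact above give $V(u_1,v_1,u_2,v_2)=V(u_1-x,v_1-x,u_2-x,v_2-x)\ge c_r:=\inf_{\overline{B(0,r)}}V$, and $c_r>0$ because $V$ is positive and continuous on the compact set $\overline{B(0,r)}$. Hence $\gL(V,\mu_n)\ge c_r\,\mu_n\big(B((x,x),r)\big)^2$ for every $x\in\bbR^d$, so $\sup_{x}\mu_n\big(B((x,x),r)\big)^2\le \gL(V,\mu_n)/c_r\to0$. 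This is the one step with genuine content: it converts an integral smallness of $\gL$ into a uniform pointwise bound, using positivity of $V$ on compact diagonal balls.

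For $(\mathrm{TD})\Rightarrow(2)$ I read $(2)$ with the supremum taken over \emph{both} free coordinates, that is $\lim_n\sup_{(y,x)\in(\bbR^d)^2}\big|\int V(y,x,u,v)\,\mu_n(\dd u,\dd v)\big|=0$ for every $V\in\cF_4$; this is the natural formulation, since by translation invariance $\int V(y,x,u,v)\,\mu_n(\dd u,\dd v)=\int V(0,x-y,u,v)\,(\gt_{-y,-y}\mu_n)(\dd u,\dd v)$ and $(\mathrm{TD})$ is invariant under the shifts $\gt_{-y,-y}$. Given $\gep>0$, vanishing at infinity yields $R>0$ with $|V|\le\gep$ as soon as the largest of the six pairwise distances of the arguments exceeds $R$; and if those six distances are all $\le R$ then in particular $\|y-u\|_\infty,\|y-v\|_\infty\le R$, i.e.\ $(u,v)\in B((y,y),R)$. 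Splitting the domain into $\{(u,v):|V(y,x,u,v)|>\gep\}$, which is contained in $B((y,y),R)$, and its complement, one gets
\[
\Big|\int V(y,x,u,v)\,\mu_n(\dd u,\dd v)\Big|\ \le\ \|V\|_\infty\,\mu_n\big(B((y,y),R)\big)+\gep\ \le\ \|V\|_\infty\,\sup_{z}\mu_n\big(B((z,z),R)\big)+\gep,
\]
and the right-hand side is independent of $(y,x)$ and tends to $\gep$. Then $(2)\Rightarrow(3)$ is immediate: write $\gL(V,\mu_n)=\int\big(\int V(u_1,v_1,u_2,v_2)\,\mu_n(\dd u_2,\dd v_2)\big)\,\mu_n(\dd u_1,\dd v_1)$, bound the inner integral uniformly in $(u_1,v_1)$ by the supremum in $(2)$, and use $\mu_n\big((\bbR^d)^2\big)\le1$. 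Finally $(3)\Rightarrow(1)$ is trivial: $V_0(x_1,\dots,x_4):=\exp\!\big(-\sum_{i<j}|x_i-x_j|\big)$ lies in $\cF_4$ and is strictly positive, so $(3)$ applied to $V_0$ is exactly $(1)$.

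The only real obstacle is the bookkeeping around $(2)$: the supremum there must effectively range over all translates of $\mu_n$ (equivalently, over both free coordinates), otherwise the statement is false --- for instance $\mu_n=\gd_{(n,n)}$ satisfies the one-coordinate version, because $\sup_x\int V(0,x,u,v)\,\mu_n(\dd u,\dd v)=\sup_xV(0,x,n,n)\to0$ by vanishing at infinity, yet $(\mu_n)$ is plainly not totally disintegrating. Apart from this point, all four implications are routine once the two elementary facts above and the translation-invariant structure of $\cF_4$ are in hand.
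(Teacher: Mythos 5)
Your proof is correct and follows the same cycle $(1)\Rightarrow(\mathrm{TD})\Rightarrow(2)\Rightarrow(3)\Rightarrow(1)$ as the paper: your step $(1)\Rightarrow(\mathrm{TD})$, the splitting of the integral over a ball and its complement for $(\mathrm{TD})\Rightarrow(2)$, and the closing implications all match the paper's argument almost verbatim (the paper takes the minimum of $V(0,\cdot,\cdot,\cdot)$ over $B(0,2r)$ after centering at $u_1$, where you center at $x$ and use the radius-$r$ ball in $(\bbR^d)^4$; both are fine, as is your explicit positive witness $V_0$ for $(3)\Rightarrow(1)$). The one genuine difference is your reading of item (2), and it is the right one. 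As literally written, item (2) freezes the first argument of $V$ at $0$ and takes the supremum only over the second argument; your example $\mu_n=\gd_{(n,n)}$ shows this condition is strictly weaker than total disintegration, since $\sup_x V(0,x,n,n)\to 0$ by vanishing at infinity while $\gL(V,\gd_{(n,n)})=V(0,0,0,0)>0$. Correspondingly, the second inequality in the paper's step {\bf (iii)}, namely $\sup_{x,y}\int V(0,y-x,u-x,v-x)\,\mu_n(\dd u,\dd v)\le\sup_{y}\int V(0,y,u,v)\,\mu_n(\dd u,\dd v)$, fails in general because the left-hand side implicitly ranges over all diagonal translates of $\mu_n$ and the right-hand side does not. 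Your two-coordinate supremum $\sup_{y,x}\bigl|\int V(y,x,u,v)\,\mu_n(\dd u,\dd v)\bigr|$ is, by translation invariance, exactly that middle quantity, so with your formulation of (2) the implication $(2)\Rightarrow(3)$ goes through using only the paper's first inequality and $\mu_n(\bbR^{2d})\le 1$, and the proof of $(\mathrm{TD})\Rightarrow(2)$ still works since total disintegration is invariant under the shifts $\gt_{x,x}$. In short: same architecture, but you have identified and repaired a real (if minor) flaw in the statement of item (2) and in step {\bf (iii)} of the paper's proof.
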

\begin{proof}[Proof of Lemma~\ref{lem:total-dis}] We prove that (1) $\Rightarrow$ \eqref{eq:total-dis} $\Rightarrow$ (2) $\Rightarrow$ (3). Clearly, (3) implies (1).
\par {\bf (i)} Let us prove that (1) implies~\eqref{eq:total-dis}. Letting
\beq
\gd := \min_{a,b,c\in B(0,2r)} V(0,a,b,c)>0,
\eeq
we get
\beq
\ba
&\int V(u_1, v_1, u_2, v_2) \mu_n(\dd u_1, \dd v_1)\mu_n(\dd u_2, \dd v_2)\\ 
&=\int V(0, v_1-u_1, u_2-u_1, v_2-u_1) \mu_n(\dd u_1, \dd v_1)\mu_n(\dd u_2, \dd v_2)\\
&\ge \gd \int_{v_1, u_2, v_2 \in B(u_1,2r )} \mu_n(\dd u_1, \dd v_1)\mu_n(\dd u_2, \dd v_2)\\
&\ge \gd \sup_{x\in\bbR^d} \mu_n(B((x,x), r))^2.
\ea
\eeq

\par {\bf (ii)} From~\eqref{eq:total-dis} to (2). Let $x,y\in\bbR^d$.
Fix $M>0$ and write
\beq
\ba
&\int|V(x,y,u,v)| \mu_n(\dd u, \dd v)\\
&= \int_{B((x,x),M)} |V(x,y,u,v)| \mu_n(\dd u, \dd v) + \int_{B((x,x),M)^c}|V(x,y,u,v)| \mu_n(\dd u, \dd v).
\ea
\eeq
The first term on the right-hand side goes to zero uniformly in $(x,y)$ as $n$ tends to infinity by the boundedness of $V$ and by \eqref{eq:total-dis}, while the second term goes to zero uniformly in $(x,y)$ as $M$ tends to infinity by the fact that $V$ vanishes at infinity.

\par {\bf (iii)} To go from (2) to (3), it is enough to write
\beq
\gL(V,\mu_n) \le \sup_{x,y\in\bbR^d} \int V(x,y,u,v) \mu_n(\dd u, \dd v)
\eeq
using the fact that $\mu_n$ is a sub-probability measure.
\end{proof}

As an immediate corollary we obtain the following:

\begin{corollary}\label{cor:total-dis}
	If the sequence $(\mu_n)$ is totally disintegrating then, for any $k\geq 2$ and any $V\in \cF_k^{(2)}$,
	\beq
	\lim_{n\to \infty}\Lambda(V,\mu_n)=0\,.
	\eeq
\end{corollary}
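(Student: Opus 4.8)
The plan is to deduce Corollary~\ref{cor:total-dis} from Lemma~\ref{lem:total-dis} by an induction on $k$, reducing the case of a general $V\in\cF_{2k}$ to the case $k=2$ already handled by parts (2)--(3) of the lemma. Fix $V\in\cF_{2k}$ with $k\ge 2$ and assume first $V\ge 0$ (the general case follows by splitting into positive and negative parts, or by bounding $|V|$ below by a nonnegative function in $\cF_{2k}$ that still vanishes at infinity; note $|V|$ itself need not be smooth but one may use $|V|\le V_+ $ pointwise where harmless, or simply work with $\int|V|\,d\mu_n^{\otimes k}$ directly since all estimates below are in absolute value). Using translation invariance, rewrite
\beq
\gL(V,\mu_n) = \int V(0, v_1-u_1, u_2-u_1, v_2-u_1, \ldots, u_k-u_1, v_k-u_1)\prod_{1\le i\le k}\mu_n(\dd u_i,\dd v_i),
\eeq
so that after the change of variables the first pair of coordinates is pinned at $(0,0)$ and the remaining $2k-2$ coordinates are integrated against $\mu_n^{\otimes(k-1)}$ translated by $-u_1$.

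The key step is to bound this by the $k=2$ quantity. Since $V$ vanishes at infinity, for every $\gep>0$ there is a compact set $K\subset\bbR^d$ such that $|V(0,a_2,\ldots,a_{2k-1})|\le\gep$ whenever some $a_i$ has $|a_i|>$ (diameter bound), i.e. whenever the point $(u_1,v_1)$ is far from one of the other arguments. More precisely I would argue: outside the event that all of $v_1-u_1, u_2-u_1, v_2-u_1$ lie in a fixed large ball $B(0,R)$, the value of $|V|$ is at most $\gep$; on the complementary event the integrand is at most $\|V\|_\infty$ times the indicator that the \emph{second} pair $(u_2,v_2)$ lies within $\ell_\infty$-distance $R$ of $(u_1,u_1)$ — wait, more carefully, within distance $R$ of the diagonal point determined by $u_1$. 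This lets me estimate
\beq
\gL(V,\mu_n)\le \gep + \|V\|_\infty \sup_{x\in\bbR^d}\mu_n\big(B((x,x),R)\big)\cdot\prod_{3\le i\le k}\mu_n(\bbR^d\times\bbR^d)^{\phantom{1}},
\eeq
and since $\mu_n$ is a sub-probability measure and $(\mu_n)$ is totally disintegrating the middle factor tends to $0$; letting $n\to\infty$ and then $\gep\downarrow 0$ finishes the argument. Alternatively, and perhaps more cleanly, I would invoke part (1) of Lemma~\ref{lem:total-dis}: pick the positive $V_0\in\cF_4$ with $\gL(V_0,\mu_n)\to0$, observe as in the proof of Lemma~\ref{lem:widely-sep} that $|V(0,a_2,\ldots,a_{2k-1})|\le C_\gep V_0(0,a_2,a_3,a_4)+\gep$ for all arguments, integrate, and use that $\gL(V_0,\mu_n)\to0$ together with Lemma~\ref{lem:total-dis}(3).

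I expect the only mild obstacle to be bookkeeping: making precise which two of the $k$ pairs play the role of the ``pinned'' pair and the ``test'' pair, and checking that the reduction really only needs control of the distance between those two pairs (so that the remaining $k-2$ integrations are harmless, contributing factors $\le1$). There is no genuine difficulty, since everything is reduced to the $k=2$ statement, which is exactly Lemma~\ref{lem:total-dis}(3), combined with the estimate $|V|\le C_\gep V_0+\gep$ that is already established in the proof of Lemma~\ref{lem:widely-sep}; hence the word ``immediate'' in the statement is justified.
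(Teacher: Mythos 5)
Your proposal is correct and is essentially the paper's argument: the paper's one-line proof applies Lemma~\ref{lem:total-dis}(3) to the four-variable dominating function $W(u_1,v_1,u_2,v_2):=\sup_{u_3,v_3,\ldots,u_k,v_k}|V(u_1,v_1,\ldots,u_k,v_k)|$, which is exactly your reduction of the $k$-fold integral to the $k=2$ case (your domination $|V(0,\cdot)|\le C_\gep V_0(0,a_2,a_3,a_4)+\gep$ is the same device as in the proof of Lemma~\ref{lem:widely-sep}, and your first, more hands-on estimate via $\gep+\|V\|_\infty\sup_{x}\mu_n(B((x,x),R))$ is also a valid direct route from the definition~\eqref{eq:total-dis}). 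The only cosmetic difference is that your domination by the fixed continuous $V_0$ sidesteps the unremarked point that the paper's $W$, being a supremum of continuous functions, requires a small extra check to belong to $\cF_4$.
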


\begin{proof}[Proof of Corollary~\ref{cor:total-dis}]
Apply Item (3) in Lemma~\ref{lem:total-dis} to 
\beq
W(u_1,v_1,u_2, v_2) := \sup_{u_3,v_3,\ldots,u_k, v_k\in\bbR^d} |V(u_1,v_1, \ldots, u_k, v_k)|.
\eeq
\end{proof}

\subsection{Compactification}
Let us define
\beq
\cF^{(2)} := \bigcup_{k\ge 2} \cF_k^{(2)},
\eeq
for which there exists a countable dense set (under the uniform metric) denoted by
\beq
\{f_r(u_1, v_1,\ldots, u_{k_r}, v_{k_r})\colon r\in \bbN\},
\eeq
(same arguments as in~\cite[Section 2.2]{MV2016}). With that we mean that for each $k\geq 2$ the family 
\beq
\cF_k^{(2)}\cap \{f_r(u_1, v_1,\ldots, u_{k_r}, v_{k_r})\colon r\in \bbN\}
\eeq is dense in $\cF_k^{(2)}$ with respect to the uniform metric. We define
\beq
\tilde\cX^{(2)} := \Big\{
\xi = \{\tilde \ga_i\}_{i\in I} \colon \ga_i \in \cM_{\le 1}^{(2)},\ \sum_{i\in I} \ga_i(\bbR^d \times \bbR^d) \le 1\Big\},
\eeq
where $I$ may be empty, finite or countably infinite. Recall~\eqref{eq:defLambda} and for any $\xi_1, \xi_2\in \tilde \cX^{(2)}$, define
\beq
\Dtwo(\xi_1, \xi_2) := \sum_{r\ge 1} \frac{1}{2^r} \frac{1}{1+\|f_r\|_{\infty}}
\Big|%
\sum_{\tilde\ga\in\xi_1} \gL(f_r, \ga)- \sum_{\tilde\ga\in\xi_2} \gL(f_r, \ga)
\Big|.%
\eeq

\begin{proposition}\label{lem:D2metric}
$\Dtwo$ is a metric on $\tilde\cX^{(2)}$.
\end{proposition}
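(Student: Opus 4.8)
The plan is to check the defining properties of a metric one at a time; the first three are routine and everything of substance is in the fourth. First I would record that the series defining $\mathbf{D}_2$ converges: for $f\in\cF_{2k}$ and $\ga\in\cM_{\le1}^{(2)}$ one has $|\gL(f,\ga)|\le\|f\|_\infty\,\ga(\bbR^d\times\bbR^d)^k\le\|f\|_\infty\,\ga(\bbR^d\times\bbR^d)$, so for $\xi=\{\tilde\ga_i\}_{i\in I}\in\tilde\cX^{(2)}$ the constraint $\sum_{i\in I}\ga_i(\bbR^d\times\bbR^d)\le1$ yields $\big|\sum_{\tilde\ga\in\xi}\gL(f,\ga)\big|\le\|f\|_\infty$; hence the $r$-th term is at most $2^{-r}\cdot 2\|f_r\|_\infty/(1+\|f_r\|_\infty)\le 2^{1-r}$, and $\mathbf{D}_2$ is finite and non-negative. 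Symmetry is immediate from $|a-b|=|b-a|$, and the triangle inequality follows by applying $|a-c|\le|a-b|+|b-c|$ termwise and summing. So the only non-trivial point is that $\mathbf{D}_2(\xi_1,\xi_2)=0$ implies $\xi_1=\xi_2$, where, as usual, collections differing only by null elements of $\cM_{\le1}^{(2)}$ are identified (equivalently, each $\ga_i$ is taken non-null).

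Next I would reduce this to a separation statement. Writing $\gL(f,\xi):=\sum_{\tilde\ga\in\xi}\gL(f,\ga)$ and using that every term in the defining sum is non-negative, $\mathbf{D}_2(\xi_1,\xi_2)=0$ forces $\gL(f_r,\xi_1)=\gL(f_r,\xi_2)$ for all $r$. From the bound $|\gL(f,\xi)-\gL(g,\xi)|\le\|f-g\|_\infty$ for $f,g\in\cF_{2k}$, together with the fact that for each $k\ge2$ the $f_r$ belonging to $\cF_{2k}$ are dense in $(\cF_{2k},\|\cdot\|_\infty)$, this equality extends to every $f\in\cF^{(2)}=\bigcup_{k\ge2}\cF_{2k}$. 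So the proposition reduces to showing that the family of functionals $\big(\gL(f,\cdot)\big)_{f\in\cF^{(2)}}$ separates the points of $\tilde\cX^{(2)}$.

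This separation is the heart of the proof and is where one adapts the argument of Mukherjee and Varadhan~\cite{MV2016}. For fixed $k\ge2$, the restriction of $\gL(\cdot,\xi)$ to $\cF_{2k}$ is the functional $f\mapsto\langle\Xi_k(\xi),f\rangle$, where $\Xi_k(\xi):=\sum_{\tilde\ga\in\xi}\ga^{\otimes k}$ is a finite measure on $(\bbR^d)^{2k}$; moreover, under the change of variables $(x_1,\dots,x_{2k})\mapsto(x_2-x_1,\dots,x_{2k}-x_1)$ the space $\cF_{2k}$ is exactly $C_0\big((\bbR^d)^{2k-1}\big)$ (the decay condition $\max_{i\ne j}|x_i-x_j|\to\infty$ being equivalent to $\max_{2\le i\le 2k}|x_i-x_1|\to\infty$). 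Since a finite Borel measure on a locally compact space is determined by its integrals against $C_0$, the hypothesis says precisely that $\xi_1$ and $\xi_2$ have the same translation-reduced $k$-th moment measure for every $k\ge2$, and it remains to reconstruct the orbit collection from this data. Following~\cite{MV2016}, I would carry out the reconstruction by testing against functions $f\in\cF_{2k}$ that are bumps concentrated near the various thin sub-diagonals of $(\bbR^d)^{2k}$ — prescribing the relative position $v_j-u_j$ inside each pair, and the relative positions of the distinct pairs — and letting the bump width tend to zero; in the limit such a test isolates, for each member $\ga_i$ and each point of its support, the corresponding local mass of $\ga_i$ raised to the power $k$, so that varying $k$ and the configuration recovers each orbit $\tilde\ga_i$, with multiplicity.

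The hard part will be the two points of care inherent to this scheme. First, the collection $\{\tilde\ga_i\}$ may be infinite, so one must control the ``diffuse'' remainder: the mass bound $\sum_i\ga_i(\bbR^d\times\bbR^d)\le1$ already guarantees that only finitely many $\ga_i$ have mass above any fixed threshold, while the rest is handled via Lemmas~\ref{lem:wid-sep-suff} and~\ref{lem:total-dis} and, in particular, Corollary~\ref{cor:total-dis}, by which a totally disintegrating part contributes $0$ to every $\gL(f,\cdot)$. Second, one must disentangle the contributions of the various members $\ga_i$; this is feasible because each summand $\gL(f,\ga_i)=\langle\ga_i^{\otimes k},f\rangle$ involves only a single member, so the reconstruction is, in effect, the recovery of a family of measures from its ``power sums'' $\sum_i\ga_i^{\otimes k}$, $k\ge2$ — the bookkeeping being the same as in~\cite{MV2016} and resting on the fact that functions in $\cF_{2k}$ decay as soon as \emph{any} pair of their arguments is sent to infinity. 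The only structural difference from~\cite{MV2016} is that the admissible shifts here are the diagonal ones $\gt_{x,x}$ rather than all of $\bbR^{2d}$; this is harmless, because the decay condition defining $\cF_{2k}$ still involves \emph{all} the pairwise differences $|x_i-x_j|$, so the Mukherjee--Varadhan argument carries over after the obvious notational changes.
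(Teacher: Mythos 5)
Your handling of the routine axioms and the reduction to a separation statement is fine, and your identification of the core problem --- recovering the collection $\{\tilde\ga_i\}$ from the translation-reduced moment measures $\sum_i \ga_i^{\otimes k}$, $k\ge 2$ --- matches the paper's Steps 1--3. But the two places where you defer to ``the same bookkeeping as \cite{MV2016}'' are exactly where the content sits, and one of them is misjudged.

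First, the ``bump functions near thin sub-diagonals'' picture does not do what you claim: testing $\sum_i\ga_i^{\otimes k}$ against such a bump returns the \emph{sum over $i$} of products of local masses, and no amount of spatial localization separates the contributions of distinct orbits (each $\ga_i$ is only defined up to translation, so different members can be superposed). The mechanism the paper actually uses (Step 1) is different: given $f\in\cF_{2k}$, one builds $g_N\in\cF_{2kr}$ converging pointwise and boundedly to $f^{\otimes r}$ by inserting a cut-off $\varphi((u_{k+1}-u_1)/N)$, so that $\gL(g_N,\ga)\to\gL(f,\ga)^r$; this upgrades the hypothesis to $\sum_i\gL(f,\ga_i)^r=\sum_i\gL(f,\gb_i)^r$ for every integer $r\ge 1$, and the multiset of values $\{\gL(f,\ga_i)\}_i$ is then determined by its power sums. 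You do name ``power sums'' at the end, so this is recoverable, but the bump heuristic should be discarded rather than presented as the reconstruction.

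Second, and more seriously, your final claim that the restriction to diagonal shifts is ``harmless'' and needs only ``obvious notational changes'' dismisses the one step of the proof that is genuinely new relative to \cite{MV2016}. Since the quotient in $\tilde\cX^{(2)}$ is by the \emph{smaller} group $\{\gt_{x,x}:x\in\bbR^d\}$, separation requires showing that two measures differing by a non-diagonal shift $(a_1,a_2)$ with $a_1\neq a_2$ cannot share all their $\gL$-values. The paper's Step 3 does this through characteristic functions: the test functions in $\cF_{2k}$ give access to the products $\prod_j\phi(s_j,t_j)$ under the single constraint $\sum_j(s_j+t_j)=0$ (not the pair of constraints $\sum_j s_j=0$ and $\sum_j t_j=0$ that the full product-shift group would impose); writing $\phi=\psi\chi$ with $\chi(s,t)=e^{i(\langle a_1,s\rangle+\langle a_2,t\rangle)}$, the extra freedom $\chi(s_1+s_2+C,\,t_1+t_2-C)=\chi(s_1,t_1)\chi(s_2,t_2)$ for all $C\in\bbR^d$ is what forces $a_1=a_2$. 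Without this argument you only conclude that each $\ga$ is determined up to \emph{some} shift of $\bbR^{2d}$, which does not separate points of $\tilde\cX^{(2)}$.
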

\begin{proof}[Proof of Proposition~\ref{lem:D2metric}]
 As in~\cite[Theorem 3.1]{MV2016}, the only non-trivial part is to show that $\Dtwo(\xi_1,\xi_2)=0$ implies $\xi_1=\xi_2$. We follow the three-step proof from that reference.
\par {\bf Step 1.} We give some level of details to show how the proof is adapted to our case. If $\Dtwo(\xi_1,\xi_2)=0$ then for all $k\ge 2$ and $f\in\cF_k^{(2)}$,
\beq
\sum_{\tilde\ga\in\xi_1} \gL(f, \ga) = \sum_{\tilde\ga\in\xi_2} \gL(f, \ga).
\eeq
We deduce therefore that for every integer $r\ge 1$,
\beq
\sum_{\tilde\ga\in\xi_1} \gL(f, \ga)^r = \sum_{\tilde\ga\in\xi_2} \gL(f, \ga)^r.
\eeq
Indeed, define the function (for $r=2$)
\beq
\ba
&g_N(u_1, v_1, \ldots, u_k, v_k, u_{k+1}, v_{k+1}, \ldots, u_{2k}, v_{2k})\\
&:=
f(u_1, v_1, \ldots, u_k, v_k) f(u_{k+1}, v_{k+1}, \ldots, u_{2k}, v_{2k})
\varphi\Big(\frac{u_{k+1}-u_1}{N} \Big),
\ea
\eeq
where $0\le \varphi \le 1$ is equal to $1$ inside a ball of radius $1$ and is truncated smoothly to $0$ outside a ball of radius $2$. Then, $g_N\in \cF_{4k}$ and converges pointwise to $f\otimes f$ as $N\to \infty$. Hence, using the fact that $f$ is bounded and dominated convergence
\begin{equation}
\lim_{N\to\infty}	\Lambda(g_N,\alpha) = \Lambda(f,\alpha)^2\,.
\end{equation} 
The general case follows the same idea.
\par {\bf Step 2.} From there we prove that to every orbit $\tilde \ga_1$ in $\xi_1$ we can match a single orbit $\tilde \ga_2$ in $\xi_2$ such that $\gL(f, \ga_1) = \gL(f, \ga_2)$ for all $f\in \cF^{(2)}_k$ and $k\ge 2$. This part of the argument is exactly the same as in~\cite{MV2016}. We do not reproduce it here, for conciseness.
\par {\bf Step 3.} This step is also an adaptation of~\cite{MV2016}, so we only sketch the arguments. We want to recover the orbit of $\ga \in \cM_{\le 1}^{(2)}$ from the value of $\gL(f,\ga)$ for $f\in \cF_k^{(2)}$. Adapting~\cite{MV2016}, from these values we get those of
\beq
\prod_{j=1}^k \phi(s_j,t_j), \quad \text{where } \quad \phi(s,t) := \int e^{i\langle (s,t),(u,v) \rangle} \ga(\dd u, \dd v), \qquad s,t\in\bbR^d,
\eeq 
provided $\sum (s_j + t_j) = 0$. Suppose now that
\beq
\forall k\ge 1,\quad \prod_{j=1}^k \phi(s_j,t_j) = \prod_{j=1}^k \psi(s_j,t_j), \qquad
\text{whenever } \sum (s_j + t_j) = 0,
\eeq
 where $\psi$ is another characteristic function.
Following~\cite{MV2016}, we obtain that $|\phi(s,t)|= |\psi(s,t)|$ and write $\phi(s,t) = \psi(s,t)\chi(s,t)$ whenever $|\phi(s,t)|= |\psi(s,t)|\neq 0$. As soon as $\sum (s_j + t_j) = \gs +\tau \in \bbR^d$, we have $\prod_{j=1}^k \chi(s_j,t_j) = \chi(\gs,\tau)$, provided that the $s_j$'s and $t_j$'s are such that $|\psi(s_j,t_j)| = |\phi(s_j,t_j)|\neq 0$ for all $j$, and $\chi(\gs,\tau)\neq 0$. In particular,
\beq
\chi(s_1+s_2, t_1+t_2) = \chi(s_1,t_1)\chi(s_2, t_2),
\eeq
hence as in~\cite{MV2016} we can show that $\chi(s,t) = e^{i(\langle a_1, s \rangle + \langle a_2, t \rangle)}$ for some $a_1, a_2 \in \bbR^d$. The fact that actually $\chi(s_1+s_2+C, t_1+t_2-C) = \chi(s_1,t_1)\chi(s_2, t_2)$ for all $C\in\bbR^d$ entails that $a_1= a_2$. This means that $\ga$ is determined up to shifts by some $(a,a)\in \bbR^{2d}$, which ends the proof.
\end{proof}

\begin{proposition}\label{pr:D2compact}
The space $\tilde \cX^{(2)}$ equipped with $\Dtwo$ is a compact metric space that contains $\tilde\cM_1^{(2)}$ as a dense subset.
\end{proposition}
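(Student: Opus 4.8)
The plan is to mirror the Mukherjee--Varadhan compactification argument (their Theorem~3.1 and Section~2.2), adapting it to the partially-shift-invariant setting. First I would establish that $(\tilde\cX^{(2)}, \mathbf{D}_2)$ is \emph{sequentially compact}; since $\mathbf{D}_2$ is a metric (Proposition~\ref{lem:D2metric}), sequential compactness is equivalent to compactness, so it suffices to show every sequence $(\xi_n)_{n\ge 1}$ in $\tilde\cX^{(2)}$ has a $\mathbf{D}_2$-convergent subsequence. The natural strategy is a diagonal/concentration-function argument: for each $n$, decompose a representative $\mu_n$ of (a piece of the mass of) $\xi_n$ according to where mass concentrates, peeling off ``bubbles'' of mass that remain tight around suitable shifts $(x,x)$, and showing the remaining mass either forms further bubbles or totally disintegrates (in the sense of~\eqref{eq:total-dis}), in which case it contributes $0$ to all the functionals $\gL(f_r,\cdot)$ by Corollary~\ref{cor:total-dis} and hence is invisible in the $\mathbf{D}_2$-limit.

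Concretely, the key steps are as follows. \textbf{Step 1 (concentration function).} Given $(\xi_n)$, work with the associated family of measures and define, for $r>0$, the concentration level $\ell_n(r) := \sup_{x\in\bbR^d}\sum_{\tilde\ga\in\xi_n}\ga(B((x,x),r))$; by a diagonal extraction over a countable set of radii we may assume $\ell_n(r)\to\ell(r)$ for every rational $r$. \textbf{Step 2 (extracting the first bubble).} If $\lim_{r\to\infty}\lim_{n}\ell_n(r) =: c_1 > 0$, pick shifts $(x_n^{(1)},x_n^{(1)})$ nearly realising the concentration, translate, and extract a vaguely convergent subsequence; by Lemma~\ref{lem:vagueweak} (applied in $\bbR^d\times\bbR^d$, which works verbatim) split off a weakly convergent piece $\ga_n^{(1)}\to\ga^{(1)}$ with disjoint-support remainder $\gb_n^{(1)}$, where $\ga^{(1)}\in\cM_{\le1}^{(2)}$ has mass $\ge c_1 > 0$. \textbf{Step 3 (iteration).} Apply the same procedure to the remainder $\gb_n^{(1)}$ to extract $\ga^{(2)}$, etc.; because each bubble carries mass at least $c_j$ with $\sum_j c_j\le 1$, only countably many bubbles arise, and one checks (via Lemma~\ref{lem:wid-sep-suff} and Lemma~\ref{lem:widely-sep}) that distinct bubbles come from widely separated sequences, so the functionals split additively in the limit: $\sum_{\tilde\ga\in\xi_n}\gL(f_r,\ga)\to\sum_{i}\gL(f_r,\ga^{(i)})$. \textbf{Step 4 (the residual disintegrates).} The leftover mass after removing all bubbles has concentration function tending to $0$ for every $r$, i.e.\ is totally disintegrating, so by Corollary~\ref{cor:total-dis} it contributes $0$ to every $\gL(f_r,\cdot)$ and may be discarded. \textbf{Step 5 (assembling the limit).} Set $\xi := \{\tilde\ga^{(i)}\}_i \in \tilde\cX^{(2)}$; then $\mathbf{D}_2(\xi_n,\xi)\to 0$ by construction, using that the defining series for $\mathbf{D}_2$ is dominated uniformly and the $r$-th term tends to $0$.

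The main obstacle I expect is Step~3/Step~4: making the iterative bubble extraction rigorous and, in particular, verifying that after removing all bubbles the residual genuinely totally disintegrates and that the sum over bubbles of the functionals converges to the sum for $\xi_n$ uniformly enough to conclude $\mathbf{D}_2$-convergence. This requires a careful bookkeeping of a doubly-indexed extraction (over bubble index $i$ and over $n$) together with a uniform tail estimate showing the total mass not yet accounted for by the first $N$ bubbles is small uniformly in $n$ for large $N$; this is exactly the delicate part of~\cite[Theorem 3.1]{MV2016} and the same argument carries over here because all the auxiliary lemmas (Lemmas~\ref{lem:vagueweak}, \ref{lem:widely-sep}, \ref{lem:wid-sep-suff}, \ref{lem:total-dis} and Corollary~\ref{cor:total-dis}) have been set up in the partially-invariant framework. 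I would therefore write the proof as: ``We follow the proof of~\cite[Theorem 3.1]{MV2016} closely, indicating the changes needed for the partial shift invariance,'' and then spell out Steps~1--5 with the references above, emphasising that the metric $\mathbf{D}_2$ only sees the equivalence classes modulo the shifts $\gt_{x,x}$ and that the vague-versus-weak dichotomy is insensitive to this restriction.
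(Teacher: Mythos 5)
Your outline reproduces the hard part of the paper's argument (the concentration-function/``bubble'' extraction of \cite[Theorem 3.2]{MV2016}, adapted to shifts of the form $(x,x)$), but it skips a step that the paper treats as essential and that your direct attack cannot avoid: the \emph{density of $\tilde\cM_1^{(2)}$ in $\tilde\cX^{(2)}$}. A general element $\xi_n=\{\tilde\ga_{n,i}\}_{i\in I_n}$ is a countable collection of \emph{orbits}, so the quantity you propose in Step 1, $\ell_n(r)=\sup_{x}\sum_{\tilde\ga\in\xi_n}\ga(B((x,x),r))$, is not well defined: it depends on the choice of representatives $\ga_{n,i}$, and there is no canonical single measure on $\bbR^{2d}$ attached to $\xi_n$ on which to run the translation/vague-convergence machinery. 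To make your Steps 2--5 meaningful you must first replace each $\xi_n$ by a single (sub-)probability measure $\mu_n$ whose orbit is $\mathbf{D}_2$-close to $\xi_n$, which means choosing representatives and placing them at mutually divergent shifts (plus a totally disintegrating piece carrying the mass deficit) so that, by Lemmas~\ref{lem:widely-sep} and~\ref{lem:total-dis}, the cross-terms in $\gL(f_r,\mu_n)$ vanish. That is precisely the explicit construction~\eqref{eq:explicitmu} in the paper's Step~1; once it is in place, compactness follows from the standard fact that a metric space with a dense subset $D$ such that every sequence in $D$ has a subsequence converging \emph{in the ambient space} is sequentially compact. Without this reduction your argument does not get off the ground for elements of $\tilde\cX^{(2)}$ that are not single orbits of probability measures.

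Apart from this omission, your Steps 2--5 coincide with the paper's Step 2: Helly extraction of $q=\lim_r\lim_n q_{\mu_n}(r)$, recentering by $(a_n,a_n)$, the vague/weak splitting of Lemma~\ref{lem:vagueweak} with disjoint supports, wide separation via Lemma~\ref{lem:wid-sep-suff}, iteration with the mass bookkeeping $\sum_j c_j\le 1$, and discarding the totally disintegrating residue via Corollary~\ref{cor:total-dis}. You correctly identify the diagonal extraction over bubbles and $n$, together with the uniform tail control over the first $N$ bubbles, as the delicate point. So the fix is purely structural: prepend the density step (and note that it is also reused later, in the proof of the lower bound), then run your extraction on the resulting sequence of single measures.
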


\begin{proof}[Proof of Proposition~\ref{pr:D2compact}]
This proof is an adaptation of the arguments used in ~\cite{MV2016}, so we only sketch the arguments.

{\bf Step 1.} Let us first show that $\tilde\cM_1^{(2)}$ is dense in $\tilde \cX^{(2)}$. Let $\xi = \{\tilde \ga_i, i\in I\} \in \tilde \cX^{(2)}$ and $\gep>0$. Pick a finite collection $\{\tilde \ga_i,\ 1\le i\le k\}$ such that $\sum_{i>k} p_i < \gep$, where $p_i$ denotes the total mass of $\tilde \ga_i$. Let $\ga_i$ be an arbitrary member of the orbit $\tilde \ga_i$ and for $M>0$, let $\gl_M$ be the Gaussian law on $\bbR^{2d}$ with zero mean and covariance matrix $M\times \rm{Id}$. Pick any $M$-parametrized sequence $(a_{i,M})_{1\le i \le k}$ in $(\bbR^d)^k$ such that $\inf_{i\neq j} |a_{i,M} - a_{j,M}| \to \infty$ as $M\to \infty$. Finally, set 
\beq
\label{eq:explicitmu}
\mu_M := \sum_{1\le i\le k} \ga_i * \gd_{(a_{i,M},a_{i,M})} + \Big(1 - \sum_{1\le i\le k} p_i \Big) \gl_M \in \cM_1^{(2)}\,.
\eeq 
The mutual distances of the centers of masses of the measures in the first sum increase to infinity, i.e., they are widely separated, see~\eqref{eq:wide-sep} and all the mass of $\gl_M$ vanishes in the limit as $M$ tends to infinity, i.e, $\gl_M$ is totally disintegrating, see~\eqref{eq:total-dis}. As a consequence, we get by Lemmas~\ref{lem:widely-sep} and \ref{lem:total-dis} that for all $k\ge 1$ and $f\in \cF_k^{(2)}$,
\beq
\lim_{M\to \infty} \Lambda(f, \mu_M)=\sum_{1\le i\le k} \Lambda(f, \tilde \ga_i).
\eeq
Since $\gep>0$ may be chosen arbitrarily small this completes the first step.

{\bf Step 2.} Let us show that for any sequence $(\mu_n)$ in $\cM_1^{(2)}$, there exists a subsequence along which $(\tilde \mu_n)$ converges to some element of $\tilde\cX^{(2)}$. Together with the first step this implies the result.

\par {\bf (i)} We start with some preliminary considerations. We use the following {\it concentration function}:
\beq
\label{eq:conc-fct}
q_{\mu}(r) := \sup_{x\in \bbR^d} \mu(B((x,x), r)), \qquad r\ge 0,\ \mu\in \cM_1^{(2)}.
\eeq
Let now $(\mu_n)_{n\in \N}$ be a sequence in $\cM_{\leq 1}^{(2)}$. Going over to a subsequence if necessary, we may define, by Helly's selection theorem,
\begin{equation}
	q:=\lim_{r\to \infty}\lim_{n\to\infty}q_{\mu_n}(r)\quad\text{and}\quad p:=\lim_{n\to\infty}\mu_n(\R^{2d})\,.
\end{equation}
If $q=0$, then it follows by Corollary~\ref{cor:total-dis} that $\tilde\mu_n\to 0$ in $\tilde \cX^{(2)}$. 
If on the other hand $q>0$, then taking a suitable translation vector $(a_n,a_n)\in \R^d\times\R^d$ we have that for some $r>0$ and all sufficiently large $n$, the shifted measure $\lambda_n= \mu_n*\delta_{(a_n,a_n)}$ satisfies
\begin{equation}\label{eq:q2}
	\lambda_n(B((0,0),r))\geq q/2\,.
\end{equation}
Choosing a subsequence if needed, we may assume that $(\gl_n)$ converges vaguely to some non-zero $\ga\in\cM_{\le 1}^{(2)}$ (see~\cite[Theorem 4.2]{Kall_RMTA} or~\cite[Section 28]{BillingBookPM}) and
by Lemma~\ref{lem:vagueweak} we may further write $\lambda_n= \alpha_n +\beta_n$ with $\alpha_n$ converging weakly to $\alpha$, and $\beta_n$ converging vaguely to zero, and both measures being concentrated on disjoint sets. By Lemma~\ref{lem:wid-sep-suff}\footnote{The use of Lemma~\ref{lem:wid-sep-suff} corrects a minor gap in \cite{MV2016}. Actually, the analogous step in the proof of \cite[Theorem 3.2, Step 2]{MV2016} does not seem to follow from Lemma 2.4, as it is claimed therein. Indeed, a sequence of sub-probability measures may converge vaguely to zero without disintegrating (think of a Dirac mass escaping to infinity).}, for every $V\in \cF_2^{(2)}$,
\begin{equation}
\lim_{n\to \infty}	\int V(u_1, v_1, u_2, v_2) \alpha_n (\dd u_1, \dd v_1) \beta_n(\dd u_2, \dd v_2) = 0\,.
\end{equation}
If additionally $q=p$, then no mass escapes to infinity and one can choose $\beta_n$ to be zero.
In that case it follows that $\tilde \mu_n \to \alpha$ in $\tilde\cX^{(2)}$, similarly to \cite[Theorem 3.2]{MV2016}.

\par {\bf (ii)} To conclude the result we can now proceed iteratively, in very much the same way as in \cite{MV2016}. Fix a sequence $(\mu_n)$ in $\cM_1^{(2)}$. If $q\in\{0,1\}$ then the considerations in Item (i) above imply the result.
Otherwise, for some sequence $(a_n,a_n)\in \R^d\times\R^d$, at least along a subsequence, we have the decomposition $\mu_n= \alpha_n + \beta_n$, where
\begin{itemize}
\item[\rm{(1)}]$\alpha_n*\delta_{(a_n,a_n)}\to \alpha \neq 0$ weakly as $n\to\infty$;
\item[(2)] For every $V\in \cF_2^{(2)}$ the integral
$$ \int V(u_1,v_1, u_2, v_2) \alpha_n(\dd u_1, \dd v_1) \beta_n(\dd u_2, \dd v_2) $$
converges to zero;
\item[(3)] $\limsup_{n\to\infty}q_{\beta_n}(r) \leq \min\{q,1-q/2\}$;
\item[(4)] $\alpha_n$ and $\beta_n$ have disjoint support.
\end{itemize}
In Item (3) we used for the first inequality that $q_{\beta_n}(r)\le q_{\mu_n}(r)\le q$ and for the second inequality that
\beq
\limsup_{n\to\infty} \gb_n(\bbR^d) \le  \limsup_{n\to\infty} (1-\ga_n(\bbR^d)) {\red \leq} 1 - q/2,
\eeq
by~\eqref{eq:q2} and the fact that $(\gl_n)$ converges {\red vaguely} to $\ga$.
If the limit in Item (3) is not zero one can repeat the procedure in Item (i) to the sequence $(\beta_n)$, i.e., for an appropriate shift of $\beta_n$ one decomposes  it (along some further subsequence) as a sum of two measures where one converges weakly and the other converges vaguely to zero and such that additionally the above four items (1)-(4) are satisfied. If this iteration process terminates after some finite number of stages $k\in\N$, meaning that the limit in Item (3) eventually becomes zero for every $r$, we obtain the decomposition
\begin{equation}\label{eq:decomposition}
	\mu_n= \sum_{j=1}^k \alpha_{n,j}*\delta_{(a_{n,j},a_{n,j})} +\beta_n\,,
\end{equation}
such that $\alpha_{n,j}*\delta_{(a_{n,j},a_{n,j})}$ converges weakly to some non-zero sub-probability measure for each $1\le j\le k$, $q_{\beta_n}(r)\to 0$ for every $r$ and such that for every $V\in\cF_2^{(2)}$ and $1\leq i<j\leq k$,
\beq
\ba
&\int V(u_1+a_{n,i},v_1+a_{n,i},u_2+a_{n,j},v_2+a_{n,j}) \alpha_{n,i}(\dd u_1,\dd v_1)\alpha_{n,j}(\dd u_2,\dd v_2)\\
\text{and} \quad & \int V(u_1+a_{n,i},v_1+a_{n,i},u_2,v_2) \alpha_{n,i}(\dd u_1,\dd v_1) \beta_n(\dd u_2,\dd v_2)
\ea
\eeq
tend to zero as $n\to \infty$. Moreover, by Remark~\ref{rmk:wide-sep} and Lemma~\ref{lem:wid-sep-div-seq}, the $n$-indexed sequences $(a_{n,i})$ satisfy
\beq\label{eq:divergingcenter}
\lim_{n\to\infty} \min_{i\neq j} |a_{n,i}-a_{n,j}| = +\infty.
\eeq
Let us stress that in general the measure $\gb_n$ in~\eqref{eq:decomposition} is not the one appearing in Item (4) (except of course if $k=1$).
If the iteration process does not terminate after a finite number of stages, then one has a similar decomposition that goes by induction. We refer to the proof of~\cite[Theorem 3.2]{MV2016} for details.
\end{proof}

\section{Large Deviation principles}
\label{sec:LDP}
 Let $X=(X_i)_{i\in\bbN_0}$ be a Markov chain in $\bbR^d$ starting from the origin and having transition kernel $\pi$ satisfying the following assumptions:
\begin{enumerate}
\item {\bf (Random walk)} There exists a function $p\colon \bbR^d\mapsto \bbR^+ =[0,\infty)$ and a reference measure $\gl$ such that $\pi(x,A)=\int_A p(y-x)\gl(\dd y)$ for all $x\in\bbR^d$ and Borel sets $A\subseteq \bbR^d$.
\item {\bf (Irreducibility)} For all Borel sets $A\subseteq \bbR^d$ such that $\gl(A)>0$, there exists $k\in\bbN$ such that $\pi^k(x,A) = \int_A p^{* k}(y-x)\gl(\dd y)>0$.
\item {\bf (Tightness)} There exists a positive sequence $(\rho_n)_{n\in \N}$ with $\limsup_{n\to\infty}\tfrac1n\log \rho_n=0$ such that
\beq\label{eq:tails}
\lim_{n\to\infty}\frac{1}{n}\log\bP\Big(\sup_{1\leq i\leq n} |X_i|\geq \rho_n\Big)=-\infty\,.
\eeq
\end{enumerate}
\par Assumption (1) implies shift-invariance of the process, see Remark~\ref{rmk:shift-inv} below. Assumption (2) is used at the end of the proof of Proposition~\ref{pr:PairLDP-LB} when applying the standard Large Deviation lower bound in the usual weak topology~\cite[Corollary 3.4 and Equation (4.1)]{DV76}. Assumption (3) is used during the proof of~Lemma~\ref{lem:PairLDP-UB-1}. These assumptions include many natural examples such as the simple random walk on $\bbZ^d$ (with the counting measure as reference measure) or the discretized Brownian motion $(B_{i\gep})_{i\in\bbN_0}$, where $\gep>0$ and $B$ is Brownian motion (with Lebesgue measure as reference measure). In both cases it is known that it suffices to choose $\rho_n$ to grow superlinearly to infinity in order for \eqref{eq:tails} to be satisfied. More generally, consider an irreducible random walk on $\Z^d$ with i.i.d.\ increments $(Y_\ell)_{\ell\ge 1}$ for which there exist $\delta,\gamma>0$ such that
\begin{equation}
\label{eq:exp-moments-incr}
	\bE[e^{\delta |Y_1|^\gamma}] <\infty\,.
\end{equation}
In that case the reference measure is again the counting measure on $\Z^d$, the chain is irreducible by assumption and it also satisfies the tightness assumption as we will see now. To this end note that, by a union bound and Markov's inequality,
\begin{equation}
	\ba
\bP\Big(\sup_{1\leq i\leq n} |X_i|\geq \rho_n\Big)
&\leq \bP\Big(\bigcup_{i=1}^n \bigcup_{\ell=1}^i |Y_\ell|\geq \rho_n/i\Big)\\
&\leq n^2 \bP\Big(|Y_1|\geq {\rho_n}/{n}\Big)\\
&\leq n^2\exp(-\delta (\tfrac{\rho_n}{n})^\gamma)\bE\Big(\exp(\delta |Y_1|^\gamma)\Big)\,.
\ea
	\end{equation}
To conclude it suffices to choose $\rho_n$ diverging much faster than $n^{(1+\gamma)/\gamma}$. As for the discretized Brownian motion, the same reasoning applies with \(\gamma=1\) and any \(\gd >0\) in~\eqref{eq:exp-moments-incr}.

\par  In the following, $h(\cdot|\cdot)$ is the relative entropy between two sub-probability measures on $\bbR^d\times \bbR^d$, i.e.,
\begin{equation}
	h(\mu|\nu)= 
\begin{cases}
\int \log\Big(\frac{\dd \mu}{\dd \nu}\Big) \dd \mu & \textrm{if } \mu \ll \nu,\\
+\infty& \textrm{else.}
\end{cases}
\end{equation}
Note that:
\begin{enumerate}
\item if $\mu$ is the zero measure it makes sense to let $h(\mu| \nu)= 0$;
\item if $\mu$ and $\nu$ have the same total mass, say $\mathsf{m}\in(0,1)$, then 
\beq
\label{eq:rel-entropy-nonneg}
h(\mu|\nu) = \mathsf{m} h\Big(\frac{\mu}{\mathsf{m}}\Big|\frac{\nu}{\mathsf{m}}\Big)\ge 0,
\eeq
since the relative entropy between two \emph{probability} measures is always non-negative.
\end{enumerate}
Finally, when $\nu \in \cM_1(\bbR^d)$, we write
\beq \label{eq:def-nu-prod-pi}
(\nu \otimes \pi)(\dd x, \dd y) = \nu(\dd x) \pi(x,\dd y).
\eeq
Let us denote by
\beq
L_n^{(2)} := \frac1n \sum_{i=1}^n \gd_{(X_{i-1}, X_i)} \in \cM_1^{(2)}
\eeq
the pair empirical measure associated to $X$ and by $\tilde L_n^{(2)}$ its orbit in $\tilde \cM_1^{(2)}$.
We may now state our main result:
\begin{theorem}\label{thm:LDPpair}
As $n\to\infty$, $(\tilde L_n^{(2)})_{n\in\bbN}$ satisfies a strong Large Deviation principle on the compact metric space $\tilde \cX^{(2)}$ equipped with the metric $\Dtwo$, with speed $n$ and rate function $\tilde J^{(2)}$, where
\beq\label{eq:ratefct}
\tilde J^{(2)}(\xi) := \sum_{\tilde\alpha\in\xi} h(\ga | \ga_{1} \otimes \pi)\,,
\eeq
if $\ga_{1}=\ga_{2}$ for all $\tilde\alpha\in\xi$, and $\tilde J^{(2)}(\xi) := +\infty$ otherwise. Here we write, with a slight abuse of notation, $\ga\in\cM_{\le 1}^{(2)}$ for a representative of $\tilde\ga\in\xi$, while $\ga_{1}$ and $\ga_{2}$ are the projections of the representative $\ga$ onto the first and the last $d$ coordinates respectively.
\end{theorem}
\begin{remark}
\label{rmk:shift-inv}
The rate function in~\eqref{eq:ratefct} is well-defined due to the fact that the transition kernel $\pi$ only depends on the difference between its two arguments. Indeed, this implies that inside the sum in~\eqref{eq:ratefct} the particular choice of $\ga$ among the orbit $\tilde \ga$ does not matter. Moreover, all the terms inside the sum are non-negative by \eqref{eq:rel-entropy-nonneg}. It follows that $\tilde J^{(2)}$ is a non-negative function. Also, note that the relative entropy between two sub-probability measures with the same mass is zero if and only if the measures coincide. Hence, $\tilde J^{(2)}(\xi)=0$ if and only if every orbit $\tilde\alpha\in\xi$ satisfies both $\alpha_1=\alpha_2$ and $\alpha= \alpha_1\otimes \pi$. An elementary computation then shows that $\alpha_1$ (and therefore $\alpha_2$) must be an invariant measure for the Markov chain $X$.
\end{remark}
\begin{remark} Following the same idea, one should be able to prove a \emph{strong} Large Deviation principle for the $k$-th order empirical measure ($k\ge 2$) defined as
\beq
L_n^{(k)} := \frac 1n \sum_{i=1}^n \gd_{(X_i, X_{i+1}, \ldots, X_{i+k-1})},
\eeq
see~\cite[Section 6.5.2]{DemboZei10:book} for the standard version of it.
Naturally, the orbits of the sub-probability measures on $\bbR^{kd}$ would be taken with respect to diagonal shifts of the form $(x, x, \ldots, x)\in \bbR^{kd}$, where $x\in\bbR^d$. In analogy with the case $k=2$, we expect the rate function to read
\beq\label{eq:ratefct-gen}
\xi\in \tilde \cX^{(k)} \mapsto\tilde J^{(k)}(\xi) := \sum_{\ga\in\xi} h(\ga | \ga_{1\ldots k-1} \otimes \pi),
\eeq
if $\ga_{1\ldots k-1}$ (projection onto the $k-1$ first $\bbR^d$-valued coordinates) coincides with $\ga_{2\ldots k}$ (projection onto the $k-1$ last $\bbR^d$-valued coordinates) for all $\alpha\in\xi$, and $\tilde J^{(k)}(\xi) := +\infty$ otherwise. Note that we used a notation analogous to~\eqref{eq:def-nu-prod-pi}, namely:
\beq
\ba
(\ga_{1\ldots k-1} \otimes \pi)(\dd x_1, \ldots, \dd x_k)
&= \ga_{1\ldots k-1}(\dd x_1, \ldots, \dd x_{k-1})\pi(x_{k-1},\dd x_k)\\
&= \ga(\dd x_1, \ldots, \dd x_{k-1}, \bbR^d)\pi(x_{k-1},\dd x_k).
\ea
\eeq
\end{remark}
\par The proof of Theorem~\ref{thm:LDPpair} is split into three parts: we prove the lower semi-continuity of the rate function (Proposition~\ref{pr:lsc}), then the lower bound (Proposition~\ref{pr:PairLDP-LB}) and finally the upper bound (Proposition~\ref{pr:PairLDP-UB}). We shall use the well-known fact that the process $(X_{i-1}, X_i)_{i\in\bbN}$ is itself a Markov chain on $\bbR^d\times\bbR^d$ with transition kernel
\beq\label{eq:kernelY}
\pi^{(2)}(x_1, x_2, \dd y_1, \dd y_2) := \gd_{x_2}(y_1) \pi(x_2, \dd y_2), \qquad x_1, x_2, y_1, y_2\in\bbR^d.
\eeq
\section{Lower semi-continuity of the rate function}
\label{sec:lsc}
\begin{proposition}
\label{pr:lsc}
The function
\beq
\xi\in \tilde \cX^{(2)} \mapsto \tilde J^{(2)}(\xi) 
\eeq
is lower semi-continuous.
\end{proposition}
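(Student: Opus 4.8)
The plan is to establish lower semi-continuity of $\tilde J^{(2)}$ by reducing it to the lower semi-continuity of the relative entropy functional together with a careful treatment of the ``mass-splitting'' structure of $\tilde \cX^{(2)}$. Recall that for fixed $\mu \in \cM_1^{(2)}$, the map $\nu \mapsto h(\mu\,|\,\nu)$ and more generally $(\mu,\nu)\mapsto h(\mu\,|\,\nu)$ is jointly lower semi-continuous with respect to weak convergence; moreover, it admits the variational (Donsker--Varadhan) representation
\beq
h(\mu\,|\,\nu) = \sup_{g} \Big\{ \int g\, \dd\mu - \log \int e^{g}\, \dd\nu \Big\},
\eeq
the supremum running over bounded continuous $g$. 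The first step I would take is to rewrite, for a single orbit $\tilde\ga$, the quantity $h(\ga\,|\,\ga_1\otimes\pi)$ in a manifestly shift-invariant way: since $\pi(x,\dd y) = p(y-x)\gl(\dd y)$ depends only on the difference, the density $\tfrac{\dd\ga}{\dd(\ga_1\otimes\pi)}(u,v)$ is a function of $\ga$ that transforms covariantly under $\gt_{x,x}$, so $h(\ga\,|\,\ga_1\otimes\pi)$ is a genuine function of the orbit, as already noted in Remark~\ref{rmk:shift-inv}. I would then seek a representation of $h(\ga\,|\,\ga_1\otimes\pi)$ as a supremum over a countable family of functionals of $\ga$ that are continuous with respect to $\mathbf D_2$ — i.e.\ built out of the test functions $f\in\cF^{(2)}$ — using the fact that the relevant logarithmic moment-generating functions can be approximated by translation-invariant, vanishing-at-infinity functions.

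The second step is to handle the sum over $i\in I$. The key structural fact, established in the proof of Proposition~\ref{pr:D2compact}, is that convergence $\xi_n\to\xi$ in $(\tilde\cX^{(2)},\mathbf D_2)$ means precisely that $\sum_{\tilde\ga\in\xi_n}\gL(f,\ga)\to\sum_{\tilde\ga\in\xi}\gL(f,\ga)$ for every $f\in\cF^{(2)}$; combined with the decomposition from that proof, each limiting component $\ga_i$ of $\xi$ arises (after suitable shifts $\delta_{(a_{n,i},a_{n,i})}$) as a weak limit of pieces $\ga_{n,i}$ of the $\xi_n$, with the remaining masses either escaping to infinity (vaguely to zero) or being totally disintegrating, hence invisible to all of $\cF^{(2)}$ by Corollary~\ref{cor:total-dis}. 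Using this, for any finite subcollection $i_1,\dots,i_m\in I$ I would extract along a subsequence shifted pieces $\ga_{n,i_\ell}\to\ga_{i_\ell}$ weakly, with $\ga_{n,i_\ell,1}\otimes\pi \to \ga_{i_\ell,1}\otimes\pi$ weakly (continuity of convolution under weak convergence, using Assumption (1)), and apply joint lower semi-continuity of $h$ componentwise. Because the $\ga_{n,i_\ell}$ are (asymptotically) widely separated — having disjoint supports at mutually diverging distances — the masses $\ga_{n,i_\ell}(\bbR^{2d})$ add up to at most the mass captured by $\mu_n$, so superadditivity of entropy over measures with (asymptotically) disjoint supports gives
\beq
\liminf_{n\to\infty} \tilde J^{(2)}(\xi_n) \;\ge\; \liminf_{n\to\infty} \sum_{\ell=1}^m h(\ga_{n,i_\ell}\,|\,\ga_{n,i_\ell,1}\otimes\pi) \;\ge\; \sum_{\ell=1}^m h(\ga_{i_\ell}\,|\,\ga_{i_\ell,1}\otimes\pi).
\eeq
Letting $m\to\infty$ over an exhaustion of $I$ yields $\liminf_n \tilde J^{(2)}(\xi_n)\ge \tilde J^{(2)}(\xi)$.

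The main obstacle I anticipate is the bookkeeping needed to justify the inequality $\sum_\ell h(\ga_{n,i_\ell}\,|\,\ga_{n,i_\ell,1}\otimes\pi)\le \tilde J^{(2)}(\xi_n)$ cleanly: one must argue that the pieces $\ga_{n,i_\ell}$ (and their projections pushed through $\pi$) sit inside $\xi_n$ in a way compatible with its own orbit-decomposition, and that relative entropy is superadditive along a decomposition of a measure into mutually singular parts with reference measures that are correspondingly decomposed. This is essentially the statement that if $\mu = \sum_j \mu^{(j)}$ with the $\mu^{(j)}$ mutually singular and $\nu \ge \sum_j \nu^{(j)}$ with $\mu^{(j)}\ll\nu^{(j)}$, then $h(\mu\,|\,\nu)\ge \sum_j h(\mu^{(j)}\,|\,\nu^{(j)})$ — true but requiring care because the first marginal of a sum is the sum of marginals, so $\ga_{i,1}\otimes\pi$ and $(\sum_i\ga_i)_1\otimes\pi$ are genuinely different reference measures and one needs the disjointness of supports (inherited, asymptotically, from the wide separation) to decouple the contributions. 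A secondary technical point is making the approximation in Step 1 uniform enough that the supremum representation passes to the limit; here one should exploit that the rate function need only be shown lower semi-continuous (not continuous), so it suffices to produce, for each $\xi$ and each $\epsilon>0$, finitely many $\cF^{(2)}$-continuous functionals whose values at $\xi$ exceed $\tilde J^{(2)}(\xi)-\epsilon$, which is the standard route and avoids any delicate uniformity.
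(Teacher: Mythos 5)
Your overall architecture --- decompose $\mu_n$ into widely separated, weakly convergent pieces plus a totally disintegrating remainder, pass to the limit componentwise, then sum --- matches the paper's, and the componentwise step via joint lower semi-continuity of $(\mu,\nu)\mapsto h(\mu|\nu)$ is a legitimate substitute for the explicit limit computation~\eqref{eq:lsc-weaklimit}. However, the step you yourself flag as the main obstacle is a genuine gap, and the ``superadditivity'' you propose to close it with is false as stated: under your hypotheses the inequality goes the other way. Indeed, if $\mu=\sum_j\mu^{(j)}$ with the $\mu^{(j)}$ mutually singular, $\nu\ge\sum_j\nu^{(j)}$ and $\mu^{(j)}\ll\nu^{(j)}$, then $\mu^{(j)}$-a.e.\ one has $\frac{\dd\mu}{\dd\nu}=\frac{\dd\mu^{(j)}}{\dd\nu}=\frac{\dd\mu^{(j)}}{\dd\nu^{(j)}}\cdot\frac{\dd\nu^{(j)}}{\dd\nu}\le\frac{\dd\mu^{(j)}}{\dd\nu^{(j)}}$, whence $h(\mu|\nu)\le\sum_j h(\mu^{(j)}|\nu^{(j)})$. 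This is precisely the \emph{sub}-additivity that the paper exploits for the lower bound (Proposition~\ref{pr:PairLDP-LB}), not the super-additivity needed here. A concrete counterexample to your inequality: take $\mu^{(1)}\perp\mu^{(2)}$ and $\nu^{(1)}=\nu^{(2)}=\nu/2$; then $\sum_j h(\mu^{(j)}|\nu^{(j)})=h(\mu|\nu)+\mu(\bbR^{2d})\log 2>h(\mu|\nu)$. The same phenomenon occurs in the application: the common reference measure $\mu_{n,1}\otimes\pi$ strictly dominates each $\alpha^{(j)}_{n,1}\otimes\pi$, and the first marginals $\alpha^{(j)}_{n,1}$ are \emph{not} mutually singular (disjoint supports in $\bbR^{2d}$ do not project to disjoint supports in $\bbR^d$), so the bound $\sum_\ell h(\alpha_{n,i_\ell}|\alpha_{n,i_\ell,1}\otimes\pi)\le \tilde J^{(2)}(\mu_n)$ fails for finite $n$. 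It holds only asymptotically, by virtue of the wide separation, and establishing that asymptotic version is the entire content of the proposition.

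The way to close the gap is the one the paper takes and which you mention only in passing: use the variational representation~\eqref{eq:rel-ent-as-sup}, $J^{(2)}(\mu_n)\ge\int\log\bigl(v/\pi^{(2)}v\bigr)\,\dd\mu_n$, with the single test function $v=1+\sum_{i}u_i^{(n)}$, where $u_i$ is a compactly supported near-optimizer for $J^{(2)}(\alpha_i)$ recentred at $-a_n^{(i)}$. Since $\pi^{(2)}$ is linear and preserves constants, and since the $u_i^{(n)}$ and $\pi^{(2)}u_i^{(n)}$ have eventually disjoint supports (by the divergence of $\min_{i\ne j}|a_n^{(i)}-a_n^{(j)}|$ and~\eqref{eq:supportui}), the logarithm splits exactly into a sum of $k$ terms and one never has to decompose the reference measure $\mu_{n,1}\otimes\pi$ at all; the cross terms and the $\beta_n$ contribution vanish by support localisation and the diagonal terms converge by the weak convergence you already have. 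A secondary remark: your Step~1 --- representing $h(\alpha|\alpha_1\otimes\pi)$ as a supremum of $\mathbf{D}_2$-continuous functionals built from $\cF^{(2)}$ --- is neither carried out nor plausible as stated, since the functionals $\Lambda(f,\cdot)$ are multilinear in the measure and cannot encode the entropy; moreover, if such a representation existed, lower semi-continuity would follow at once and your Step~2 would be superfluous. That step is dispensable once the variational argument above is in place.
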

Before we come to the proof of the above result we need to collect some facts about the relative entropy, which will also be of use later on. To that end we denote by $\cB(\R^{d}\times\R^d)$ the space of bounded and measurable real-valued  functions, by $\cC_b(\R^{d}\times \R^d)$ the space of continuous and bounded real-valued functions and by $\cU(\R^d\times\R^d)=:\cU$ the space of continuous, non-negative and compactly supported functions. 
\begin{proposition} 
\label{pr:alt-form-J-2-alpha}
For every $\ga\in \cM_{\le 1}(\bbR^d\times \bbR^d)$,
\beq
J^{(2)}(\alpha) = \sup_{u\in \cU} \int \log\Big( \frac{u+1}{\pi^{(2)}(u+1)} \Big) \dd \ga\,,
\eeq
where  
\beq
\ba
J^{(2)}(\alpha):=
\begin{cases}
	 h(\ga | \ga_1\otimes \pi)\,,&\text{if $\ga_1=\ga_2$}\,,\\
	+\infty\,,&\text{otherwise.} 
\end{cases}
\ea
\eeq
\end{proposition}
We recall that with a slight abuse of notation, $\ga_{1}$ (resp. $\ga_2$) denotes the projection of $\ga$ onto the first (resp.\ last) $d$ coordinates in the above proposition.
\begin{proof}[Proof of Proposition~\ref{pr:alt-form-J-2-alpha}]
By \cite[Theorem~6.5.12 and Corollary~6.5.10]{DemboZei10:book}\footnote{Although the given reference considers only \emph{probability} measures, \eqref{eq:rel-ent-as-sup} carries over to sub-probability measures. Indeed, if $\ga \in \cM_{\leq 1}^{(2)}$ and $\mathsf{m}= \ga(\bbR^d\times \bbR^d)\in (0,1)$ then $J^{(2)}(\ga) = \mathsf{m} J^{(2)}(\ga/\mathsf{m})$.} one has for every sub-probability measure $\ga$ the relation
\beq\label{eq:rel-ent-as-sup}
J^{(2)}(\alpha) = \sup_{v\in \cB(\R^{d}\times\R^d), v\geq 1} \int \log\Big( \frac{v}{\pi^{(2)}v} \Big) \dd \ga \,.
\eeq
Moreover, by~\cite[Exercise 6.5.7]{DemboZei10:book} we have that 
\beq
\sup_{v\in \cB(\R^{d}\times\R^d), v\geq 1} \int \log\Big( \frac{v}{\pi^{(2)}v} \Big) \dd \ga  = \sup_{v\in \cC_b(\R^{d}\times\R^d), v\geq 1} \int \log\Big( \frac{v}{\pi^{(2)}v} \Big) \dd \ga\,.
\eeq
Writing $v\in \cC_b(\R^{d}\times \R^d)$ with $v\geq 1$ as $v=u+1$ for $u$ a non-negative, bounded and continuous function, an approximation argument using smooth cut-off functions and dominated convergence shows that 
\beq
\sup_{v\in \cC_b(\R^{d}\times\R^d), v\geq 1} \int \log\Big( \frac{v}{\pi^{(2)}v} \Big) \dd \ga = \sup_{u\in \cU} \int \log\Big( \frac{u+1}{\pi^{(2)}(u+1)} \Big) \dd \ga\,.
\eeq
\end{proof}

We are now in a position to prove Proposition~\ref{pr:lsc}.

\begin{proof}[Proof of Proposition~\ref{pr:lsc}] 
Let $(\mu_n)$ be a sequence in $\tilde\cX^{(2)}$ converging to $\xi =\{\tilde \ga_i\}_{i\in I}$. Suppose that there exists $\ell\in(0,\infty)$ such that $\tilde J^{(2)}(\mu_n)\le \ell$ for all $n$ large enough and let us show that $\tilde J^{(2)}(\xi)\le \ell$. We for now restrict to the case where for each $n$, $\mu_n$ is made of a single orbit, so that $\tilde J^{(2)}(\mu_n)=J^{(2)}(\mu_n)$. Let $\gep>0$. 
By the same arguments used in Proposition~\ref{pr:D2compact} and possibly restricting to a subsequence, we may write for some $k\ge 1$, 
\beq
\mu_n = \sum_{i=1}^k \ga_n^{(i)} + \gb_n,
\eeq
where $\ga_n^{(i)}$ ($1\le i \le k$) and $\gb_n$ are sequences of sub-probability measures in $\bbR^d \times \bbR^d$ such that
\beq\label{eq:conv-ga-i-n}
\ga_n^{(i)}*\delta_{(a_n^{(i)},a_n^{(i)})} \stackrel{\rm (weakly)}{\longrightarrow}\ga_i \in \tilde \ga_i, \qquad n\to \infty,
\eeq
for some sequences $a_n^{(i)}$ ($1\le i\le k$) in $\bbR^d$, and $(\gb_n)$ is widely separated from each $(\ga_n^{(i)})$, with (recall~\eqref{eq:conc-fct})
	\beq
	\limsup_{n\to\infty} q_{\gb_n}(r) \le \gep, \qquad \forall r>0,
	\eeq
and
\beq
\lim_{n\to\infty} \min_{i\neq j} |a_n^{(i)}-a_n^{(j)}| = +\infty.
\eeq
Then, by Lemma~\ref{lem:vagueweak} and the construction in Step 2 of Proposition~\ref{pr:D2compact}, there exists for each $i$ a sequence $(R_n^{(i)})_{n\in\N_0}$ tending to infinity such that 
\beq \label{eq:ppties-supports}
\ba
&\supp(\alpha_n^{(i)})\subseteq B((-a_n^{(i)}, -a_n^{(i)}), R_n^{(i)}),\\
&i\neq j \implies B((-a_n^{(i)}, -a_n^{(i)}), R_n^{(i)}) \cap B((-a_n^{(j)}, -a_n^{(j)}), R_n^{(j)}) = \emptyset,
\ea
\eeq
and the support of $\beta_n$ is contained in the complement of $\bigcup_i B((-a_n^{(i)}, -a_n^{(i)}), R_n^{(i)})$. 
In particular, $\alpha_n^{(1)}, \alpha_n^{(2)},\ldots, \beta_n$ are concentrated on disjoint sets.
We now define
	\beq\label{eq:def-u-i-n}
	u_i^{(n)}(x,y) = u_i(x+a_n^{(i)},y+a_n^{(i)}), \qquad x,y\in\bbR^d\,,
	\eeq
where, for each $i$, $u_i$ is a given non-negative continuous function with compact support.
Consequently, the support of each $u_i^{(n)}$ is contained in a compact ball in $\R^{2d}$ centered around $(-a_n^{(i)}, -a_n^{(i)})$. In particular, recalling~\eqref{eq:kernelY} there is $R>0$ such that for all $1\leq i\leq k$,
	\beq
	\ba
	&\supp\ u_i^{(n)} \subseteq\ B((-a_n^{(i)},-a_n^{(i)}), R)\,,\\
	&\supp\ \pi^{(2)}u_i^{(n)} \subseteq\ \bbR^d\times B(-a_n^{(i)}, R)\,,
	\ea
	\eeq
	hence
	\beq\label{eq:supportui}
	\Big( \frac{1+u_i^{(n)}}{1+\pi^{(2)}u_i^{(n)}} \Big)(x,y) \neq 1 \Rightarrow y\in B(-a_n^{(i)}, R).
	\eeq
	We may now check that
	\beq
		\label{eq:lsc-weaklimit}
		\lim_{n\to\infty } \int \log\Big( \frac{1+u_i^{(n)}}{1+\pi^{(2)}u_i^{(n)}} \Big) \dd \ga_n^{(j)}
		= 
		\begin{cases}
			\int \log\Big( \frac{1+u_i}{1+\pi^{(2)}u_i} \Big) \dd \ga_i & (i=j)\\
			0 & (i\neq j).
		\end{cases}
		\eeq
		The case $i=j$ follows from~\eqref{eq:conv-ga-i-n} and~\eqref{eq:def-u-i-n} together with the fact that $\pi$ only depends on the difference of its arguments. Assume now that $i\neq j$. Note that, from what precedes, the integral on the left-hand side may be restricted to couples $(x,y)$ such that $y\in B(-a_n^{(i)}, R) \cap B(-a_n^{(j)}, R_n^{(j)})$. If this set is non-empty, then $|a_n^{(i)}-a_n^{(j)}| \le R + R_n^{(j)}$. This is not possible when $n$ is large enough, since $|a_n^{(i)}-a_n^{(j)}|> R_n^{(i)} + R_n^{(j)}$ by~\eqref{eq:ppties-supports}, so the integral is eventually zero.
		Finally, letting
		\beq
		u^{(n)} = \sum_{1\le i\le k} u_i^{(n)},
		\eeq
		we obtain:
		\beq\label{eq:lowerlast}
		\ba
		\ell &\ge \liminf_{n\to\infty} J^{(2)}(\mu_n)\ge \liminf_{n\to\infty} \int \log\Big( \frac{1+u^{(n)}}{1+\pi^{(2)}u^{(n)}} \Big) \dd \mu_n\\
		&\ge \liminf_{n\to\infty} \sum_{1\le i,j\le k} \int \log\Big( \frac{1+u_i^{(n)}}{1+\pi^{(2)}u_i^{(n)}} \Big) \dd \ga_n^{(j)} \\
		&\ge \sum_{1\le i\le k} J^{(2)}(\ga_i) - \gep.
		\ea
		\eeq
	Here, the contribution coming from $\beta_n$ is zero because for sufficiently large $n$ we have that $\beta_n(\dd x,\dd y)=0$ for $y\in B(-a_n^{(i)}, R)$, which together with~\eqref{eq:supportui} shows that the contribution is indeed zero. It now remains to send $\gep$ to zero and $k$ to infinity. Finally, to treat the general case, that is when $\mu_n$ has possibly more than one orbit, the idea is the same as in the last paragraph of~\cite[proof of Lemma 4.2]{MV2016}.
\end{proof}

\section{Lower bound}
\label{sec:lb}

\begin{proposition}\label{pr:PairLDP-LB}
For any open set $G$ in $\tilde\cX^{(2)}$, 
\beq
\liminf_{n\to\infty} \frac1n \log \bP(\tilde L_n^{(2)} \in G) \ge - \inf_{\xi\in G} \tilde J^{(2)}(\xi).
\eeq
\end{proposition}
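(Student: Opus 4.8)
The plan is to reduce the problem to the classical (weak-topology) large deviation lower bound for the pair empirical measure of the Markov chain $(X_{i-1},X_i)_{i\in\bbN}$, which is available because this is itself a Markov chain on $\bbR^d\times\bbR^d$ with kernel $\pi^{(2)}$ as in~\eqref{eq:kernelY}, and because assumptions~(1) and~(2) give the irreducibility needed to invoke~\cite[Corollary 3.4 and Equation (4.1)]{DV76}. The rate function produced there is $h(\cdot\,|\,\cdot\,\otimes\pi^{(2)})$ restricted to shift-invariant (i.e.\ stationary) measures, and one checks that for a stationary measure $\alpha$ on $\bbR^d\times\bbR^d$ this relative entropy equals $h(\alpha\,|\,\alpha_1\otimes\pi)=J^{(2)}(\alpha)$, matching the single-orbit case of $\tilde J^{(2)}$.

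First I would fix an open set $G\subseteq\tilde\cX^{(2)}$ and $\xi=\{\tilde\alpha_i\}_{i\in I}\in G$ with $\tilde J^{(2)}(\xi)<\infty$ (otherwise there is nothing to prove), and it suffices to produce, for each $\epsilon>0$, a lower bound of the form $\liminf_n \tfrac1n\log\bP(\tilde L_n\in G)\ge -\tilde J^{(2)}(\xi)-\epsilon$. By the density argument of Step~1 in the proof of Proposition~\ref{pr:D2compact} — approximating $\xi$ by a single concentrated measure — one reduces to the case where $\xi=\tilde\alpha$ is a single orbit of some $\alpha\in\cM_1^{(2)}$ with $J^{(2)}(\alpha)<\infty$; here one uses lower semicontinuity of $\tilde J^{(2)}$ (Proposition~\ref{pr:lsc}) to control the rate function along the approximation, and the fact that the infimum over $G$ is approached. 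The crux is then to relate the event $\{\tilde L_n\in G\}$, an event about the \emph{orbit} of the (un-shifted) empirical measure $L_n$, to an event about the empirical measure itself in the weak topology. Since $\alpha$ has $J^{(2)}(\alpha)<\infty$, it is absolutely continuous with respect to $\alpha_1\otimes\pi$, hence in particular $\alpha$ is a stationary law for $\pi^{(2)}$; the Markov chain $(X_{i-1},X_i)$ started from $\alpha_1$ under the tilted kernel has empirical measure concentrating near $\alpha$ in the weak topology, and convergence in the weak topology of (a shift of) $L_n$ to $\alpha$ implies $\mathbf{D}_2$-convergence of $\tilde L_n$ to $\tilde\alpha$. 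Because $G$ is $\mathbf{D}_2$-open and contains $\tilde\alpha$, a weak neighbourhood of $\alpha$ gets mapped into $G$, so $\bP(\tilde L_n\in G)\ge\bP(L_n\in U)$ for a suitable weak-open $U\ni\alpha$, and the classical lower bound finishes the job, yielding the constant $J^{(2)}(\alpha)$.

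One technical point to handle carefully: the chain starts deterministically at the origin (so $L_n$ starts from $\delta_{(0,X_1)}$-type configurations), whereas the natural stationary measure $\alpha$ need not be centred at the origin; this is exactly why passing to the orbit $\tilde L_n$ rather than $L_n$ is essential, and why one must shift $\alpha$ (or equivalently the empirical measure) before comparing. The argument that starting from a fixed point versus from $\alpha_1$ does not change the exponential rate is standard for irreducible chains (a finite-time "warm-up" to reach a set of positive $\alpha_1$-mass costs nothing on the exponential scale), and assumption~(2) is precisely what guarantees this. The main obstacle is this bookkeeping — transferring an orbit-level open set to a genuine weak-topology open set around a representative, and confirming that the Donsker–Varadhan lower bound in the non-compact setting applies with rate function agreeing with $J^{(2)}$ — rather than any deep new estimate; all the heavy lifting (compactness, the structure of $\tilde\cX^{(2)}$, continuity properties) has been done in the preceding sections.
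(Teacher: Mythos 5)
Your overall strategy coincides with the paper's: approximate $\xi$ by genuine probability measures in $\cM_1^{(2)}$ (Step~1 of Proposition~\ref{pr:D2compact}), note that weak convergence of measures implies $\mathbf{D}_2$-convergence of orbits so that a $\mathbf{D}_2$-open $G$ pulls back to a weak-topology open set around the approximant, and then invoke the Donsker--Varadhan lower bound of \cite{DV76} for the pair chain $(X_{i-1},X_i)$. The bookkeeping about shifts and about the deterministic starting point is also handled as you describe.

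However, there is a genuine gap at the one place where the proof has real content: the control of the rate function along the approximating sequence. You write that one uses \emph{lower semicontinuity} of $\tilde J^{(2)}$ (Proposition~\ref{pr:lsc}) to "control the rate function along the approximation". Lower semicontinuity gives $\liminf_M J^{(2)}(\tilde\mu_M)\ge \tilde J^{(2)}(\xi)$, which is the wrong direction: for the LDP lower bound you need a \emph{recovery sequence}, i.e.\ approximants $\mu_M\in\cM_1^{(2)}$ with $\tilde\mu_M\to\xi$ \emph{and} $\limsup_M J^{(2)}(\mu_M)\le \tilde J^{(2)}(\xi)$; otherwise the classical bound only yields $-J^{(2)}(\mu_M)$, which could a priori be much worse than $-\tilde J^{(2)}(\xi)$. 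This is exactly what the paper constructs: in the density construction~\eqref{eq:explicitmu} the Gaussian filler $\gl_M$ is replaced by the product measure $\gl_{M,1}\otimes\pi$, which has zero relative entropy cost, and then the sub-additivity of $\nu\mapsto h(\nu|\nu_1\otimes\pi)$ (read off from the variational formula~\eqref{eq:rel-ent-as-sup}) gives $h(\mu_M|\mu_{M,1}\otimes\pi)\le\sum_i h(\ga_i|\ga_{i,1}\otimes\pi)\le \tilde J^{(2)}(\xi)$. Your reduction "to a single orbit of some $\ga$ with $J^{(2)}(\ga)<\infty$" silently assumes this superadditivity-type control; without the specific choice of filler and the sub-additivity argument, the approximant's rate is not bounded by the rate of $\xi$, and the argument does not close.
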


\begin{proof}[Proof of Proposition~\ref{pr:PairLDP-LB}]
Let $\xi = \{\ga_i, i\in I\}$ be an element of $\tilde\cX^{(2)}$ such that $\tilde J^{(2)}(\xi)<+\infty$, hence $h(\ga_i | \ga_{i,1} \otimes \pi)<+\infty$ for all $i\in I$. Let $U$ be any open neighborhood of $\xi$. It is enough to prove that
\beq
\liminf_{n\to\infty} \frac1n \log \bP(\tilde L_n^{(2)} \in U) \ge - \tilde J^{(2)}(\xi).
\eeq
We proceed as in~\cite[Lemma 4.3]{MV2016} and use the density of $\tilde \cM_1^{(2)}$ in $\tilde \cX^{(2)}$, see Proposition~\ref{pr:D2compact}. Let $k\ge 1$ and consider the sequence $(\mu_M)$ defined as in~\eqref{eq:explicitmu}, except that we replace the totally disintegrating sequence $(\gl_M)$ by $(\gl_{M,1}\otimes \pi)$, that is totally disintegrating as well, since for every $x\in\bbR^d$ and $r>0$, $(\gl_{M,1}\otimes \pi)(B((x,x),r)) \le \gl_{M,1}(B(x,r))$ (recall that we use the product norm on $\bbR^d\times \bbR^d$). Moreover, it
is such that $\tilde\mu_M$ still converges in $\tilde\cX^{(2)}$ to $\xi$. Note that $\nu\in\cM_{\le1}^{(2)} \mapsto h(\nu| \nu_1\otimes \pi)$ is sub-additive as the supremum of linear functions, see~\eqref{eq:rel-ent-as-sup}. Therefore, we obtain
\beq
J^{(2)} (\tilde\mu_M)=h(\mu_M | \mu_{M,1}\otimes \pi) \le \sum_{1\le i \le k}h(\ga_i | \ga_{i,1}\otimes \pi)\leq J^{(2)}(\xi).
\eeq
Here, we used that $h(\lambda_{M,1}\otimes \pi|\lambda_{M,1}\otimes \pi)=0$ to obtain the first inequality, while the last inequality follows from the next-to-last sentence in Remark~\ref{rmk:shift-inv}.
Thus, we have shown that there exists a sequence $(\mu_M)_{M\in\N_0}$ in $\tilde \cM_1^{(2)}$ which converges in $\tilde \cX^{(2)}$ to $\xi$ and is such that
	\beq
	\limsup_{M\to\infty}J^{(2)}(\tilde\mu_M)\leq J^{(2)}(\xi)\,.	
	\eeq
The lower bound now follows from the standard Large Deviation lower bound of the pair empirical measure on $\cM_1^{(2)}$ (see~\cite{DV76} and the discussion in Section~\ref{sec:intro}).	\end{proof}

\section{Upper bound}
\label{sec:ub}

In this section we prove the following
\begin{proposition}\label{pr:PairLDP-UB}
For any closed set $F$ in $\tilde\cX^{(2)}$, 
\beq
\limsup_{n\to\infty} \frac1n \log \bP(\tilde L_n^{(2)} \in F) \le - \inf_{\xi\in F} \tilde J^{(2)}(\xi).
\eeq
\end{proposition}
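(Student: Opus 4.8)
The plan is to reduce the upper bound to a \emph{local} one and to exploit that $(\tilde\cX^{(2)},\mathbf D_2)$ is compact (Proposition~\ref{pr:D2compact}), so that a closed set $F\subseteq\tilde\cX^{(2)}$ is automatically compact and a ``weak'' upper bound upgrades for free to the asserted strong one. Concretely I would establish: for every $\xi\in\tilde\cX^{(2)}$ and every $\gep>0$ there is an open neighbourhood $U=U_{\xi,\gep}\ni\xi$ with
\[
\limsup_{n\to\infty}\tfrac1n\log\bP(\tilde L_n\in U)\le-\bigl(\tilde J^{(2)}(\xi)\wedge\gep^{-1}\bigr)+\gep .
\]
Granting this, one covers $F$ by finitely many such neighbourhoods $U_{\xi_1},\dots,U_{\xi_m}$, uses $\bP(\tilde L_n\in F)\le\sum_j\bP(\tilde L_n\in U_{\xi_j})$ together with $\limsup\tfrac1n\log(a_n+b_n)=\max(\limsup\tfrac1n\log a_n,\limsup\tfrac1n\log b_n)$, and lets $\gep\downarrow0$; since each $\xi_j\in F$ this gives $\limsup\tfrac1n\log\bP(\tilde L_n\in F)\le-\inf_F\tilde J^{(2)}$. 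Lower semicontinuity (Proposition~\ref{pr:lsc}) guarantees that $\tilde J^{(2)}$ is a genuine rate function.

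The engine for the local bound is the Donsker--Varadhan exponential change of measure applied to the pair chain $Z_m:=(X_{m-1},X_m)$, which is Markov with kernel $\pi^{(2)}$ by~\eqref{eq:kernelY}, together with the variational representation of the entropy already used in the proof of Proposition~\ref{pr:lsc}. For any bounded measurable $w\colon\bbR^d\times\bbR^d\to[1,\infty)$ the process $N_n:=\tfrac{w(Z_n)}{w(Z_1)}\prod_{m=2}^n\tfrac{w(Z_{m-1})}{\pi^{(2)}w(Z_{m-1})}$ is a mean-one martingale, and, the boundary terms being harmless since $1\le\pi^{(2)}w\le\|w\|_\infty$, it satisfies $N_n\ge\|w\|_\infty^{-1}\exp\bigl(n\int\log\tfrac{w}{\pi^{(2)}w}\,\dd L_n\bigr)$; hence $\e\bigl[\exp(n\int\log\tfrac{w}{\pi^{(2)}w}\,\dd L_n)\bigr]\le\|w\|_\infty$. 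Now fix $\xi=\{\tilde\ga_i\}_{i\in I}$, put $\ell:=\tilde J^{(2)}(\xi)\wedge\gep^{-1}$, choose a finite $I_0\subseteq I$ with $\sum_{i\in I_0}J^{(2)}(\ga_i)\ge\ell-\gep/3$, and for each $i\in I_0$ use the variational formula to pick $u_i\colon\bbR^d\times\bbR^d\to[1,\infty)$ continuous and equal to $1$ off a compact set, with $\int\log\tfrac{u_i}{\pi^{(2)}u_i}\,\dd\ga_i\ge J^{(2)}(\ga_i)-\tfrac{\gep}{3|I_0|}$. Each $g_i:=\log\tfrac{u_i}{\pi^{(2)}u_i}$ is bounded, uniformly continuous, and vanishes for $|y|$ large uniformly in $x$. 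The crucial observation is that whenever points $\bar c_i\in\bbR^d$ ($i\in I_0$) are pairwise farther apart than twice the diameters of the $\supp(u_i-1)$, the function $w:=1+\sum_{i\in I_0}(u_i-1)(\cdot-(\bar c_i,\bar c_i))$ is again admissible with $\|w\|_\infty=1+\max_i\|u_i-1\|_\infty$, and, by disjointness of the relevant supports and translation covariance of $\pi^{(2)}$, $\log\tfrac{w}{\pi^{(2)}w}=\sum_{i\in I_0}g_i(\cdot-(\bar c_i,\bar c_i))$; therefore
\[
\e\Bigl[\exp\Bigl(n\sum_{i\in I_0}\int g_i(\cdot-(\bar c_i,\bar c_i))\,\dd L_n\Bigr)\Bigr]\le 1+\max_i\|u_i-1\|_\infty ,
\]
\emph{uniformly} in the admissible configurations $(\bar c_i)_{i\in I_0}$.

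It remains to feed the event $\{\tilde L_n\in U\}$ into this. By the tightness assumption~\eqref{eq:tails} the trajectory stays in $B(0,\rho_n)$ with super-exponentially small failure probability, so off that failure event $L_n$ is supported in $B(0,\rho_n)\times B(0,\rho_n)$. Next, using the description of convergence and the decomposition produced in the proof of Proposition~\ref{pr:D2compact}, one chooses $U$ small enough that $\tilde L_n\in U$ forces $L_n=\sum_{i\in I_0}\ga_n^{(i)}+\gb_n$ with $\ga_n^{(i)}*\gd_{(c_i,c_i)}$ close to $\ga_i$ (so $c_i\in B(0,\rho_n)$), the $c_i$ pairwise far apart, and $\gb_n$ of small diagonal concentration; controlling the cross terms $\int g_i(\cdot-(c_i,c_i))\,\dd\ga_n^{(j)}$ ($j\ne i$) by Lemma~\ref{lem:widely-sep} and the ``escaping'' term $\int g_i(\cdot-(c_i,c_i))\,\dd\gb_n$ as at the end of the proof of Proposition~\ref{pr:lsc}, this yields $\sum_{i\in I_0}\int g_i(\cdot-(c_i,c_i))\,\dd L_n\ge\sum_{i\in I_0}J^{(2)}(\ga_i)-2\gep/3\ge\ell-\gep$. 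Rounding each $c_i$ to a fixed grid of spacing $\eta$ in $B(0,\rho_n)$ perturbs these integrals by at most $|I_0|\max_i\omega_{g_i}(\sqrt d\,\eta)<\gep/3$ for $\eta$ small and fixed; since $\limsup\tfrac1n\log\rho_n=0$ there are only $e^{o(n)}$ such rounded configurations, all still admissible for large $n$. Thus $\{\tilde L_n\in U\}$ is, up to the super-exponentially small failure event, contained in the union over these $e^{o(n)}$ configurations of $\{\sum_{i\in I_0}\int g_i(\cdot-(\bar c_i,\bar c_i))\,\dd L_n\ge\ell-4\gep/3\}$, so by Chebyshev, the displayed exponential-moment bound and a union bound,
\[
\bP(\tilde L_n\in U)\le(\text{a super-exponentially small term})+e^{o(n)}\bigl(1+\max_i\|u_i-1\|_\infty\bigr)e^{-n(\ell-4\gep/3)},
\]
whence $\limsup_n\tfrac1n\log\bP(\tilde L_n\in U)\le-(\ell-4\gep/3)$; relabelling $\gep$ then gives the asserted local bound.

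The main obstacle is the interface between the abstract convergence $\tilde L_n\to\xi$ and the concrete integrals $\int g_i(\cdot-(c_i,c_i))\,\dd L_n$: one needs a quantitative form of the decomposition of Proposition~\ref{pr:D2compact} and, above all, control of the escaping-mass contribution $\int g_i(\cdot-(c_i,c_i))\,\dd\gb_n$ — exactly the delicate point met at the end of the proof of Proposition~\ref{pr:lsc}, and more demanding here because $g_i$ is supported on an unbounded $y$-slab rather than on a compact set, so one must build the decomposition so that $\gb_n$ genuinely avoids the relevant slabs (or argue around it). The non-translation-invariance of $g_i$ is what forces the shift-and-grid device, and it is precisely to keep the number of admissible grid configurations subexponential that Assumption~\eqref{eq:tails} enters. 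A careful bookkeeping of which piece sits near which $c_i$, and of how the required pairwise separations depend on the size of $U$, is the bulk of the remaining work.
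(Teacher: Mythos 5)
Your proposal is correct and is essentially the paper's own argument (which is Mukherjee--Varadhan's Proposition 4.4 machinery) unrolled into a single pass: your Donsker--Varadhan martingale with widely separated translates of the $u_i$ is the functional $\tilde F(u,c,R,\cdot)$ of~\eqref{eq:defg} and Lemma~\ref{lem:PairLDP-UB-1}, your grid coarse-graining over $B(0,\rho_n)$ via Assumption~\eqref{eq:tails} is exactly how that lemma handles the supremum over $a_1,\dots,a_k$, your passage from $\mathbf D_2$-closeness to lower bounds on $\sum_i\int g_i\,\dd L_n$ (including the delicate $\gb_n$/slab issue you flag) is Lemma~\ref{lem:PairLDP-UB-2}, and your use of~\eqref{eq:rel-ent-as-sup} to pick the $u_i$ is Lemma~\ref{lem:PairLDP-UB-3}, all combined with the same compactness-and-finite-cover step. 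No substantive difference in route or in the technical points that remain to be checked.
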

Recall that $\cU$ denotes the space of non-negative, continuous and compactly supported functions defined on $\bbR^d\times \bbR^d$. For any $k\ge 1$, $u := (u_1,\ldots, u_k)\in \cU^k$ and $a:=(a_1, \ldots, a_k) \in (\bbR^d)^k$, let $g = g(u,a,R) \colon \bbR^d\times \bbR^d \to (0,\infty)$ be defined by
\beq
\label{eq:defg}
g(x,y) = 1 + \sum_{i=1}^k u_i(x+a_i, y+a_i) \varphi\Big(\frac{x+a_i}{R}, \frac{y+a_i}{R}\Big), \qquad x,y\in\bbR^d,
\eeq
where $\varphi$ is a smooth non-negative function such that $0\le \varphi \le 1$, $\varphi = 1$ inside the unit ball and $\varphi=0$ outside the ball of radius two. Recall the definition of $\pi^{(2)}$ in \eqref{eq:kernelY} and define, for every $\mu\in\cM_1(\bbR^d \times \bbR^d)$,
\beq
{\sf F}(u,R, \mu)= \suptwo{a_1,\ldots, a_k}{\min_{i\neq j} |a_i - a_j| \ge 4R} \int_{\bbR^d\times \bbR^d} -\log\Big(\frac{\pi^{(2)}g(x,y)}{g(x,y)} \Big) \mu(\dd x, \dd y).
\eeq
Since ${\sf F}(u,R, \cdot)$ is invariant under shifts of the form $\mu \to \mu * \gd_{(x,x)}$, we may lift it up to a function $\tilde {\sf F}$ defined on $\tilde\cM_1^{(2)}$. In the sequel, we write
\beq
u_{i,R}(x,y) := u_i(x,y) \varphi(x/R, y/R), \qquad x,y\in\bbR^d.
\eeq

The proof of the upper bound follows from the following three lemmas:

\begin{lemma}[Sub-exponential growth]
\label{lem:PairLDP-UB-1} 
For any choice of $k\ge 1$, $u\in \cU^k$ and $R>0$,
\beq
\limsup_{n\to\infty} \frac1n \log \bE \exp(n \tilde {\sf F}(u,R, \tilde L_n^{(2)})) \le 0.
\eeq
\end{lemma}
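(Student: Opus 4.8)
The plan is to reduce the exponential functional to a standard change-of-measure argument for the Markov chain $(X_{i-1},X_i)_{i\in\bbN}$ with kernel $\pi^{(2)}$. First I would rewrite the functional $\tilde F(u,c,R,\tilde L_n)$ using the definition of $F$: since $\tilde L_n$ is the orbit of $L_n = \frac1n\sum_{i=1}^n \gd_{(X_{i-1},X_i)}$ and $F(u,c,R,\cdot)$ involves a supremum over shift-vectors $a_1,\ldots,a_k$ with $\min_{i\neq j}|a_i-a_j|\ge 4R$, I would \emph{first} bound the supremum from inside by a \emph{single} choice of the $a_j$'s that may depend on the trajectory, but—and this is the key point—observe that for \emph{any fixed} choice of well-separated $a_j$'s, the function $g=g(u,c,R)$ appearing in \eqref{eq:defg} is a fixed, strictly positive, bounded and bounded-below function on $\bbR^d\times\bbR^d$ (it equals $c$ outside finitely many balls of radius $2R$), so that $-\log(\pi^{(2)}g/g)$ is a bounded measurable function. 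Thus $n\int -\log(\pi^{(2)}g/g)\,\dd L_n = -\sum_{i=1}^n \log\big(\frac{(\pi^{(2)}g)(X_{i-1},X_i)}{g(X_{i-1},X_i)}\big)$ is a telescoping-type additive functional.

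The main step is then the classical observation that $M_n^g := \prod_{i=1}^{n} \frac{g(X_{i-1},X_i)}{(\pi^{(2)}g)(X_{i-1},X_i)}$ is, up to boundary terms, a supermartingale (in fact a mean-one martingale if $g$ is replaced by $\pi^{(2)}$-harmonic-type normalization), because $\bE\big[g(X_i,X_{i+1})\,\big|\,\cF_i\big] = (\pi^{(2)}g)(X_{i-1},X_i)$ by the Markov property and the definition \eqref{eq:kernelY} of $\pi^{(2)}$. More precisely, I would write
\beq
\bE\Big[\prod_{i=1}^{n}\frac{g(X_{i-1},X_i)}{(\pi^{(2)}g)(X_{i-1},X_i)}\Big]
= \bE\Big[\prod_{i=1}^{n-1}\frac{g(X_{i-1},X_i)}{(\pi^{(2)}g)(X_{i-1},X_i)}\cdot \frac{\bE[g(X_{n-1},X_n)\mid \cF_{n-1}]}{(\pi^{(2)}g)(X_{n-2},X_{n-1})}\Big],
\eeq
and here $\bE[g(X_{n-1},X_n)\mid\cF_{n-1}] = (\pi^{(2)}g)(X_{n-2},X_{n-1})\cdot\frac{(\pi^{(2)}g)(X_{n-2},X_{n-1})}{\ldots}$—one must be slightly careful because of the degenerate first coordinate in $\pi^{(2)}$; the clean statement is $\bE[g(X_{n-1},X_n)\mid X_{n-1}] = \int g(X_{n-1},y)\,p(y-X_{n-1})\,\gl(\dd y)$, which is exactly $(\pi g)(X_{n-1},\cdot)$ evaluated appropriately. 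Iterating, $\bE[M_n^g] \le g(x_0, \cdot)$-type boundary factor, which is a \emph{constant} depending only on $(u,c,R)$ and not on $n$; hence $\frac1n\log\bE[M_n^g]\to 0$.

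The one genuine obstacle is the trajectory-dependent supremum over the shift vectors $a_1,\ldots,a_k$: $\tilde F$ is a supremum over \emph{all} admissible configurations, so $\exp(n\tilde F(u,c,R,\tilde L_n))$ is a supremum of the functionals $M_n^{g(\cdot;a_1,\ldots,a_k)}$ over an uncountable family, and one cannot interchange $\bE$ and $\sup$ for free. Here I would invoke Assumption (3) (tightness): on the event $\{\sup_{1\le i\le n}|X_i|\le \rho_n\}$, which carries all but super-exponentially small probability, all the mass of $L_n$ sits in a ball of radius $\rho_n$, so only shift vectors $a_j$ with $|a_j|\lesssim\rho_n$ can contribute a nonzero term to $F$; the number of such $4R$-separated configurations of $k$ points in a ball of radius $\rho_n$ is at most polynomial in $\rho_n$ (of order $(\rho_n/R)^{dk}$), and since $\frac1n\log\rho_n\to 0$ this sub-exponential cardinality lets one replace the supremum by a union bound at no exponential cost: $\bE\exp(n\tilde F) \le (\text{poly in }\rho_n)\cdot\sup_{a}\bE[M_n^{g(\cdot;a)}] + \bE[\exp(n\|{\log}(\ldots)\|_\infty)\one_{\sup|X_i|>\rho_n}]$, and the second term is controlled because $\|\log(\pi^{(2)}g/g)\|_\infty \le \log(1 + \|u\|_\infty/c)$ is bounded, so $\exp(n\cdot\text{const})\cdot\bP(\sup|X_i|>\rho_n)$ still tends to $-\infty$ in exponential rate by \eqref{eq:tails}. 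Taking $\frac1n\log$ and letting $n\to\infty$ yields the claimed $\limsup \le 0$.
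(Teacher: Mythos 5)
Your proposal follows essentially the same route as the paper: for a fixed choice of the shift vectors, rewrite the functional as the product $\prod_{i=1}^n g(X_{i-1},X_i)/\pi^{(2)}g(X_{i-1},X_i)$, telescope it via the Markov property of $(X_{i-1},X_i)$ leaving only boundary factors bounded by $\sup g/c$, and then remove the trajectory-dependent supremum over $a_1,\dots,a_k$ by a coarse-graining/union bound on the event $\{\sup_{i\le n}|X_i|\le\rho_n\}$ together with the super-exponential bound \eqref{eq:tails} on its complement. The only blemish is your intermediate display, which as written double-counts $\pi^{(2)}g(X_{n-2},X_{n-1})$ and drops $\pi^{(2)}g(X_{n-1},X_n)$; the correct bookkeeping is the rearrangement $\prod_{i=1}^n \tfrac{g(X_{i-1},X_i)}{\pi^{(2)}g(X_{i-1},X_i)} = \tfrac{g(X_0,X_1)}{\pi^{(2)}g(X_{n-1},X_n)}\prod_{i=1}^{n-1}\tfrac{g(X_i,X_{i+1})}{\pi^{(2)}g(X_{i-1},X_i)}$, after which conditioning on $\cF_{n-1}$ cancels the last factor exactly, consistent with the ``clean statement'' you record.
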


\begin{lemma}[Lower-semicontinuous extension]
\label{lem:PairLDP-UB-2}
If the sequence $(\tilde \mu_n)$ converges to $\xi = (\tilde \ga_i)_{i\in I}$ in $(\tilde\cX^{(2)}, {\mathbf D}_2)$, then for every finite $k\le |I|$, $u=(u_1,\ldots, u_k)\in \cU^k$ and $R>0$,
\beq
\liminf_{n\to \infty} \tilde {\sf F}(u,R, \tilde \mu_n) \ge \tilde\Lambda(u,R,\xi),
\eeq
where
\beq
\label{eq:tildeLambda}
\tilde\Lambda(u,R,\xi):=
 \sup_{\{\tilde\ga_1, \ldots, \tilde\ga_k\}\subseteq \xi} 
\sum_{i=1}^k  \sup_{b\in\bbR^d} \int 
-\log\Big\{\frac{\pi^{(2)}(1+u_{i,R})(x,y)}{(1+u_{i,R})(x,y)}\Big\}
 \ga_i(\dd x  + b, \dd y + b).
\eeq
\end{lemma}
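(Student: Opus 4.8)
The plan is to run, for the upper bound, the mirror image of the argument behind lower semi-continuity in Proposition~\ref{pr:lsc}: one places the bumps $u_1,\dots,u_k$ on top of the ``atoms'' of $\xi$ carried by $\tilde\mu_n$, decouples the resulting integral by exploiting the compact support of the $u_i$, and passes to the weak limit bump by bump. First I would reduce to a finite sub-collection: fix $\{\tilde\ga_1,\dots,\tilde\ga_m\}\subseteq\xi$ with $m\le k$, and for each $i$ a representative $\ga_i$ together with a shift that realises the supremum over $b$ in~\eqref{eq:tildeLambda} up to a given $\gep>0$; it is then enough to show
\[
\liminf_{n\to\infty}\tilde F(u,c,R,\tilde\mu_n)\ \ge\ \sum_{i=1}^m\int-\log\Big\{\tfrac{\pi^{(2)}(c+u_{i,R})}{c+u_{i,R}}\Big\}\,\dd\ga_i-m\gep,
\]
and then to let $\gep\downarrow0$ and take the supremum over sub-collections (each of which involves only finitely many orbits; the passage from $\mu_n$ a single orbit to the general case is handled as in the last paragraph of~\cite[proof of Lemma~4.2]{MV2016}). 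Along a subsequence realising the liminf, the construction in Step~2 of the proof of Proposition~\ref{pr:D2compact}, together with Lemma~\ref{lem:vagueweak}, yields a decomposition $\mu_n=\sum_{i=1}^m\ga_n^{(i)}+\gb_n$ with pairwise disjoint supports such that $\ga_n^{(i)}*\gd_{(a_n^{(i)},a_n^{(i)})}\Rightarrow\ga_i$ weakly, $\min_{i\ne j}|a_n^{(i)}-a_n^{(j)}|\to\infty$, $\supp\ga_n^{(i)}\subseteq B((-a_n^{(i)},-a_n^{(i)}),R_n^{(i)})$ with $R_n^{(i)}\to\infty$ slowly enough that $R_n^{(i)}=o(\min_{i\ne j}|a_n^{(i)}-a_n^{(j)}|)$, and $\gb_n$ totally disintegrating with $\supp\gb_n$ contained in the complement of $\bigcup_i B((-a_n^{(i)},-a_n^{(i)}),R_n^{(i)})$.

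The second step is to test $\tilde F(u,c,R,\tilde\mu_n)$ against the admissible placement $a_i:=a_n^{(i)}$ (admissible since $\min_{i\ne j}|a_i-a_j|\ge4R$ once $n$ is large), obtaining $\tilde F(u,c,R,\tilde\mu_n)\ge\int-\log(\pi^{(2)}g_n/g_n)\,\dd\mu_n$ with $g_n$ the function from~\eqref{eq:defg} for this choice, and then to split this integral over the pieces $\ga_n^{(1)},\dots,\ga_n^{(m)},\gb_n$. Since each $u_{i,R}$ is supported in a ball of radius $2R$ and, by Assumption~(1), $\pi^{(2)}$ acts only on the second coordinate and is translation invariant (so it too is localised), for $n$ large only the $i$-th bump of $g_n$ is active on $\supp\ga_n^{(i)}$; translating by $(a_n^{(i)},a_n^{(i)})$ then shows that $\int-\log(\pi^{(2)}g_n/g_n)\,\dd\ga_n^{(i)}=\int-\log\{\pi^{(2)}(c+u_{i,R})/(c+u_{i,R})\}\,\dd\big(\ga_n^{(i)}*\gd_{(a_n^{(i)},a_n^{(i)})}\big)$. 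The integrand here is a fixed function, bounded (because $c\le g_n\le c+\max_i\|u_i\|_\infty$ and $\pi^{(2)}$ is an averaging operator) and continuous (by the Feller property of $\pi$, hence of $\pi^{(2)}$), so the weak convergence $\ga_n^{(i)}*\gd_{(a_n^{(i)},a_n^{(i)})}\Rightarrow\ga_i$ gives $\int-\log(\pi^{(2)}g_n/g_n)\,\dd\ga_n^{(i)}\to\int-\log\{\pi^{(2)}(c+u_{i,R})/(c+u_{i,R})\}\,\dd\ga_i$.

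The main obstacle is to control the remainder $\int-\log(\pi^{(2)}g_n/g_n)\,\dd\gb_n$ and to show that it does not spoil the lower bound (in fact it will vanish). For $n$ large one has $g_n\equiv c$ on $\supp\gb_n$, because $\supp\gb_n$ avoids the product balls $B((-a_n^{(i)},-a_n^{(i)}),2R)\subseteq B((-a_n^{(i)},-a_n^{(i)}),R_n^{(i)})$; consequently the integrand on $\supp\gb_n$ reduces to $-\log(\pi^{(2)}g_n/c)$, which is a bounded function of the second coordinate and is nonzero only on the slabs $\{y\in B(-a_n^{(i)},2R)\}$. The crux is therefore to show that $\gb_n$ puts negligible mass on these slabs; this is the delicate point, obtained from the fine support structure of $\gb_n$ furnished by Lemma~\ref{lem:vagueweak} (the pieces and the remainder being chosen with disjoint supports of the right shape), exactly as in the corresponding step of the proof of Proposition~\ref{pr:lsc}. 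Combining the three contributions gives the displayed inequality, which together with the reductions of the first paragraph completes the argument.
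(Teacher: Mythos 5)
Your proof follows the same route as the paper's: the decomposition of $\mu_n$ furnished by Step 2 of Proposition~\ref{pr:D2compact} together with Lemma~\ref{lem:vagueweak}, the placement of the bumps at the centres $a_n^{(i)}$ so that (by the $4R$-separation) at most one term of $g$ and of $\pi^{(2)}g$ is active at any point and the logarithm splits into a sum, weak convergence of each shifted piece against the bounded continuous integrand, and the support argument for the remainder $\gb_n$; the two extra suprema (over $b$ and over finite sub-collections) are handled exactly as in the paper's closing remarks. You in fact spell out more than the paper, which concludes by deferring to~\cite[Lemma 4.6]{MV2016}, and you correctly isolate the genuinely delicate point --- that $\gb_n$ must be shown to put negligible mass on the slabs $\{y\in B(-a_n^{(i)},2R)\}$ (where $\pi^{(2)}g_n>c$) and not merely on the product balls excluded by the disjoint-support construction --- which is precisely the step the paper itself resolves by appeal to the corresponding passage in the proof of Proposition~\ref{pr:lsc}.
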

 \begin{remark}
\label{rmk:PairLDP-UB-2}
Lemma~\ref{lem:PairLDP-UB-2} is analogous to Lemma 4.6 in~\cite{MV2016}. However, the two suprema in~\eqref{eq:tildeLambda} are not in the original paper. First, we add the supremum over $b\in\bbR^d$ so that the quantity inside is a function of the orbit $\tilde \ga_i$ only rather than a function of a particular member of its orbit. This has however no consequence on the sequel of the argument in~\cite{MV2016}, since they later consider a supremum over functions (Lemma 4.7) allowing for arbitrary shifts. The other supremum is here to stress that an element of $\xi$ is a \emph{collection} of sub-probability orbits rather than a sequence.
\end{remark}

\begin{lemma}\label{lem:PairLDP-UB-3}
We have
\beq
\tilde J^{(2)}(\xi) = \suptwo{R>0,\, 1\le k\le |I|}{u_1, \ldots, u_k\in \cU} \tilde \Lambda(u, R, \xi).
\eeq
\end{lemma}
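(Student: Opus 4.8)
The plan is to prove the two inequalities separately. For the direction $J^{(2)}(\xi) \ge \sup_{c,R,k,u} \tilde\Lambda(u,c,R,\xi)$, I would work orbit by orbit. Fix $\{\tilde\ga_1,\ldots,\tilde\ga_k\}\subseteq\xi$ and bounded functions $u_1,\ldots,u_k\in\cU$, together with $c,R>0$. For each $i$, the function $c+u_{i,R}$ is bounded away from $0$ and from $\infty$ (it takes values in $[c, c+\|u_i\|_\infty]$), so it is an admissible test function in the variational formula~\eqref{eq:rel-ent-as-sup} for $J^{(2)}(\ga_i)$ up to the harmless normalization $(c+u_{i,R})/c\ge 1$ — note that adding a constant multiplicative factor to the test function leaves $\log(v/\pi^{(2)}v)$ unchanged, so we may freely use $c+u_{i,R}$ itself. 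Since $J^{(2)}$ is shift-invariant (Remark~\ref{rmk:shift-inv}), the supremum over $b\in\bbR^d$ in~\eqref{eq:tildeLambda} does not increase the value beyond $J^{(2)}(\ga_i)=J^{(2)}(\tilde\ga_i)$. Summing over $i$ and using $\sum_{i=1}^k J^{(2)}(\tilde\ga_i)\le \sum_{i\in I} J^{(2)}(\tilde\ga_i)=J^{(2)}(\xi)$ (all terms are non-negative, as relative entropies), then taking the supremum over all choices of finite sub-collections and test functions, yields $\tilde\Lambda(u,c,R,\xi)\le J^{(2)}(\xi)$ and hence this half of the identity.

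For the reverse direction $J^{(2)}(\xi)\le \sup_{c,R,k,u}\tilde\Lambda(u,c,R,\xi)$, I would again reduce to a single orbit: it suffices to show that for every finite sub-collection $\{\tilde\ga_1,\ldots,\tilde\ga_k\}$ of $\xi$ and every $\epsilon>0$ there exist $c,R>0$ and $u_1,\ldots,u_k\in\cU$ with $\sum_{i=1}^k J^{(2)}(\tilde\ga_i)-\epsilon \le \tilde\Lambda(u,c,R,\xi)$, and then let the sub-collection exhaust $I$. So fix one orbit $\tilde\ga$ with representative $\ga$. By~\eqref{eq:rel-ent-as-sup}, pick a bounded measurable $v\colon\bbR^d\times\bbR^d\to[1,\infty)$ with $\int\log(v/\pi^{(2)}v)\,\dd\ga \ge J^{(2)}(\ga)-\epsilon/(2k)$. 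The task is to approximate $v$ by a function of the admissible form $c+u_R$ with $u\in\cU$ while controlling the change in $\int\log(v/\pi^{(2)}v)\,\dd\ga$. First, truncate: since $\ga$ is a finite measure, it puts almost all mass on a large ball, so replacing $v$ by $v\cdot\varphi(\cdot/R) + (1-\varphi(\cdot/R))$ (which keeps the function in $[1,\infty)$ and compactly perturbs it) changes the integral by at most $\epsilon/(4k)$ for $R$ large; this is where the compact-support structure in~\eqref{eq:defg} enters. Then approximate the truncated bounded function from below by a continuous compactly supported perturbation of the constant $1$ — i.e.\ write it as $1+u_R$ with $u\in\cU$ — using density of continuous functions in $L^1$ of the relevant measures ($\ga$ and $\pi^{(2)}\ga$), and dominated convergence to pass the limit through $\log(v/\pi^{(2)}v)$, noting that $\pi^{(2)}$ is a contraction on $L^\infty$ so $\pi^{(2)}(1+u_R)$ is also bounded away from $0$ and $\infty$. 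Finally take $c=1$ (or absorb a small constant), and the supremum over $b$ is only in our favor.

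The main obstacle I anticipate is the approximation step in the second direction: controlling the integrand $\log(\pi^{(2)}v/v)$, which involves $v$ both in the numerator's argument (through the Markov operator $\pi^{(2)}$, of the degenerate kernel form~\eqref{eq:kernelY}) and in the denominator, under an $L^1(\ga)$-type approximation of $v$. One must ensure simultaneously that $v$ and $\pi^{(2)}v$ stay uniformly bounded below by a positive constant after approximation — which the lower bound $v\ge 1$ and $\pi^{(2)}1=1$ guarantees — and that the approximating functions can be taken of the precise form $c+\sum u_i\varphi(\cdot/R)$. The degenerate structure of $\pi^{(2)}$, namely $\pi^{(2)}g(x,y)=\int g(y,z)\pi(y,\dd z)$, is actually helpful here: it shows $\pi^{(2)}u_{i,R}$ depends only on $y$ and is supported near $-a_i$ in the $y$-coordinate, which is exactly what makes the single-orbit reduction legitimate and matches the support bookkeeping already carried out in the proof of Proposition~\ref{pr:lsc}. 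I would lean on that bookkeeping rather than redo it. Everything else (the sum over $I$, non-negativity of entropy, shift-invariance) is routine given the results already established. The argument parallels~\cite[Lemma 4.7]{MV2016}, adapted to the pair setting and to the degenerate kernel $\pi^{(2)}$.
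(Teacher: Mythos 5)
Your proposal is correct and follows essentially the same route as the paper: the paper's two-line proof cites \cite[Lemma 4.7]{MV2016} for the interchange of the supremum with the sum over orbits and \cite[Theorem~6.5.12, Corollary~6.5.10]{DemboZei10:book} for the single-orbit identification with $h(\ga_i|\ga_{i,1}\otimes\pi)$, and your two inequalities (per-orbit bound via~\eqref{eq:rel-ent-as-sup} plus non-negativity of the entropies, and approximation of a near-optimal test function by one of the form $c+u_R$) are exactly what those citations unpack. The only detail left implicit in your second direction is that the per-orbit choices must be recombined with a \emph{common} $c$ and $R$, which is routine: normalize each $c_i$ to $1$ by the multiplicative invariance you already invoked, and take $R$ larger than all the (compact) supports of the $u_i$, so that $u_{i,R}=u_i$.
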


\begin{proof}[Proof of Lemma~\ref{lem:PairLDP-UB-1}]
Use that
\beq
- n \int_{\bbR^d\times \bbR^d} \log\Big(\frac{\pi^{(2)}g(x,y)}{g(x,y)} \Big) L_n^{(2)}(\dd x, \dd y)=
\log \prod_{i=1}^n \frac{g(X_{i-1}, X_i)}{\pi^{(2)}g(X_{i-1}, X_i)}\,,
\eeq
so that
\beq\label{eq:martingale}
\bE\Big[\exp\Big(n\int_{\R^d\times\R^d}-\log\Big(\frac{\pi^{(2)}g(x,y)}{g(x,y)}\Big)L_n^{(2)}(\dd x,\dd y)\Big)\Big]
= \bE\Big[\prod_{i=1}^n\frac{g(X_{i-1}, X_i)}{\pi^{(2)}g(X_{i-1},X_i)}\Big]\,.
\eeq
We write the product as 
\beq
\prod_{i=1}^n\frac{g(X_{i-1}, X_i)}{\pi^{(2)}g(X_{i-1},X_i)}
= \frac{g(X_0,X_1)}{\pi^{(2)}g(X_{n-1},X_n)}\prod_{i=1}^{n-1}\frac{g(X_i,X_{i+1})}{\pi^{(2)}g(X_{i-1}, X_i)}\,,
\eeq
and since $g$ is bounded from below by $1$ we see that
\beq
\bE\Big[\prod_{i=1}^n\frac{g(X_{i-1}, X_i)}{\pi^{(2)}g(X_{i-1},X_i)}\Big]
\leq (\sup g)\ \bE\Big[\prod_{i=1}^{n-1}\frac{g(X_i,X_{i+1})}{\pi^{(2)}g(X_{i-1}, X_i)}\Big]\,,
\eeq
and, by the fact that $Y_i= (X_{i-1}, X_i)$ is a Markov chain and an induction argument the last expectation is one.
 This shows that the exponential rate of the right-hand side of~\eqref{eq:martingale} is zero. It therefore only remains to deal with the case in which in~\eqref{eq:martingale} an additional supremum is taken over $a_1,\ldots, a_k$ as in the statement of the result. This follows via a coarse graining argument. The idea is that by~\eqref{eq:tails} it is exponentially unlikely that $X$ travels in the time interval $[0,n]$ to a distance $\rho_n$, which allows one to restrict the supremum over $a_1,\ldots,a_k$ to balls of radius $\rho_n$. In a very similar way as in~\cite[proof of Lemma 4.5]{MV2016} one may then conclude, so we omit the details.

\end{proof}

\begin{proof}[Proof of Lemma~\ref{lem:PairLDP-UB-2}] Let $k\le |I|$ be a finite integer and $\{\tilde \ga_1, \ldots, \tilde \ga_k\} \subseteq \xi$. As it can be seen from the second step of the proof of Proposition~\ref{pr:D2compact} (see in particular~\eqref{eq:decomposition}--\eqref{eq:divergingcenter}), convergence in ${\mathbf D_2}$ implies the existence of a decomposition 
\beq\label{eq:decompupper}
\mu_n = \sum_{j=1}^k \ga_{n,j} * \gd_{(a_{n,j},a_{n,j})} + \gb_n,
\eeq
along subsequences
as in~\eqref{eq:decomposition}, where, for all $1\le j\le k$ 
\begin{itemize}
\item $(a_{n,j})_{n\ge 1}$ is a sequence in $\bbR^d$ satisfying
\beq\label{eq:FarApart}
|a_{n,i} - a_{n,j}| \ge 4R
\eeq
if $n$ large enough and $i\neq j$;
\item $\ga_{n,j} * \gd_{(a_{n,j},a_{n,j})}$ converges weakly to $\ga_j$ as $n\to\infty$, where $\ga_j$ is some element in the orbit of $\tilde \ga_j$;
\item $(\ga_{n,j})$ and $(\gb_n)$ are widely separated.
\end{itemize}
Recall~\eqref{eq:defg}. Choosing $a_i = - a_{n,i}$ in the definition of $g$, we obtain
\beq
g(x,y) = 1 + \sum_{i=1}^k u_{i,R}(x-a_{n,i}, y-a_{n,i}).
\eeq
By~\eqref{eq:FarApart} and our assumption on $\varphi$, at most one term in the sum above can be nonzero. Also,
\beq
\pi^{(2)}u_{i,R}(x-a_{n,i}, y-a_{n,i})=
\int u_{i,R}(y-a_{n,i}, z-a_{n,i})p(z-y) \lambda(\dd z)
\eeq
is nonzero for at most one value of $1\le i \le k$ which is the same as in the above. We finally obtain:
\beq
\log\Big(\frac{\pi^{(2)}g(x,y)}{g(x,y)}\Big) = \sum_{i=1}^k 
\log\Big(\frac
{1 + \pi^{(2)}u_{i,R}(x-a_{n,i}, y-a_{n,i})}
{1 + u_{i,R}(x-a_{n,i}, y-a_{n,i})}
\Big)
\eeq%
and we can conclude almost as in~\cite[Lemma 4.6]{MV2016}. As we already pointed out in Remark~\ref{rmk:PairLDP-UB-2}, there is more flexibility in choosing $a_i$ (see~\eqref{eq:defg}), which explains the additional supremum (over $b$) in our statement, compared to~\cite[Lemma 4.6]{MV2016}. Indeed, let $b_j\in\bbR^d$ for all $1\le j \le k$. Then we may choose $a_i = - a_{n,i}+b_i$ instead of $a_i=-a_{n,i}$. If we require that $|a_{n,i} - a_{n,j}| \ge 4R + \max_{1\le j \le k} |b_j|$ instead of simply $|a_{n,i} - a_{n,j}| \ge 4R$, then we finally get our claim. To see why the second supremum appears, note that the choice of $\{\tilde\alpha_1, \tilde\alpha_2, \ldots, \tilde\alpha_k\}$ at the beginning of the proof was arbitrary.
\end{proof}

\begin{proof}[Proof of Lemma~\ref{lem:PairLDP-UB-3}]
Let $\xi = \{\tilde \ga_i\}_{i\in I} \in \tilde\cX^{(2)}$, $1\le k\le |I|$ and $u=(u_1, \ldots, u_k)\in \cU^k$.
Note that, for every $1\le i \le k$,
\beq
R\in(0,\infty)\mapsto \sup_{u_i\in \cU} \int - \log\Big\{ \frac{\pi^{(2)}(1+u_{i,R})(x,y)}{(1+u_{i,R})(x,y)}\Big\} \ga_i(\dd x, \dd y)
\eeq		
converges non-decreasingly, as $R\to \infty$, to
\beq
\sup_{u\in \cU} \int - \log\Big\{ \frac{\pi^{(2)}(1+u)(x,y)}{(1+u)(x,y)}\Big\} \ga_i(\dd x, \dd y)\, .
\eeq
Hence,	 
\beq
\ba
\suptwo{R>0}{u_1, \ldots, u_k\in \cU} \tilde \Lambda(u, R, \xi) &= \sup_{\{\tilde\ga_1, \ldots, \tilde\ga_k\}\subseteq \xi} 
\sum_{i=1}^k \suptwo{b\in\R^d}{u\in \cU} \int - \log\Big\{ \frac{\pi^{(2)}(1+u)(x,y)}{(1+u)(x,y)}\Big\} \ga_i(\dd x+b, \dd y+b)\\
&=\sup_{\{\tilde\ga_1, \ldots, \tilde\ga_k\}\subseteq \xi} \sum_{i=1}^k \sup_{u\in \cU} \int - \log\Big\{ \frac{\pi^{(2)}(1+u)(x,y)}{(1+u)(x,y)}\Big\} \ga_i(\dd x, \dd y)\,.
\ea
\eeq
Here, the second equality holds since instead of shifting the measure $\ga_i$ one might also consider a spatial shift of $u\in\cU$.
The supremum in the sum  coincides with $J^{(2)}(\ga_i)$, by Proposition~\ref{pr:alt-form-J-2-alpha}. Hence, to conclude it suffices to consider the supremum over $1\leq k\leq |I|$.
\end{proof}

\begin{proof}[Proof of Proposition~\ref{pr:PairLDP-UB}]
The proof follows from Lemmas~\ref{lem:PairLDP-UB-1}-\ref{lem:PairLDP-UB-3} in the exact same way as Proposition 4.4 in~\cite{MV2016} follows from Lemmas 4.5--4.7 therein. 

We sketch the argument for the reader's convenience. Let $\xi \in\tilde \cX^{(2)} = \{\tilde \ga_i\}_{i\in I}$, $\gd>0$ and $B(\xi,\gd)$ be the associated closed ball for the ${\mathbf D}_2$ metric. By Lemma~\ref{lem:PairLDP-UB-1}, we have for every $1\le k\le |I|$, $u\in \cU^k$ and $R>0$,
\beq
\ba
0 &\ge \limsup_{n\to\infty} \frac1n \log \bE\Big[ \exp\Big(n \tilde {\sf F}(u,R, \tilde L_n^{(2)})\Big)\ind_{\{\tilde L_n^{(2)}\in B(\xi,\gd)\}}\Big]\\
& \ge \inf_{B(\xi,\gd)} \tilde {\sf F}(u,R, \cdot) + \limsup_{n\to\infty} \frac1n \log \bP\big(\tilde L_n^{(2)}\in B(\xi,\gd)\big).
\ea
\eeq
Letting $\gd\to 0$, Lemma~\ref{lem:PairLDP-UB-2} yields:
\beq
\limsup_{\gd\to0}\limsup_{n\to\infty} \frac1n \log \bP\big(\tilde L_n^{(2)}\in B(\xi,\gd)\big) \le - \tilde \Lambda(u,R,\xi).
\eeq
Optimizing over $(k,u,R)$, we finally get, by Lemma~\ref{lem:PairLDP-UB-3}:
\beq
\limsup_{\gd\to0}\limsup_{n\to\infty} \frac1n \log \bP\big(\tilde L_n^{(2)}\in B(\xi,\gd)\big) \le - \tilde J^{(2)}(\xi).
\eeq
Since $\tilde \cX^{(2)}$ is compact (Proposition~\ref{pr:D2compact}), this is enough to conclude the proof.
\end{proof}

\section{Adaptation to rescaled random walks}
\label{sec:adapt}
A small adaptation of the proof of Proposition~\ref{pr:PairLDP-UB} yields the same result also for a rescaled random walk. To precisely formulate the result we need to introduce more notation. Let $(X_i)_{i\in\bbN_0}$ be a random walk in $\bbZ^d$. Assume that its step distribution is centered and square-integrable, with $(1/d){\rm Id}$ as covariance matrix and that a tightness condition as in~\eqref{eq:tails} holds. Let $(a_n)$ be a sequence of positive real numbers converging to $+\infty$ and such that $a_n^2=o(n)$. Let $\gep>0$. Define $\ell:= \ell(\gep,n) = \lfloor \gep a_n^2\rfloor$ and 
\beq
{X}_i^{\gep,n}:=\frac{X_{i\ell}}{a_n}, \qquad i\in\bbN_0.
\eeq
Denote by $L_{n,\gep}^{(2)}$ the corresponding pair empirical measure,
that is
\beq
L_{n,\gep}^{(2)} = \frac1{\lfloor n/\ell \rfloor} \sum_{i=1}^{\lfloor n/\ell \rfloor} \gd_{(X_{i-1}^{\gep,n},X_i^{\gep,n})}.
\eeq
\begin{remark}
The relevant scale for potential applications to the Swiss cheese model~\cite{Phetpradap, BBH2001} corresponds to the choice $a_n := n^{1/d}$.
\end{remark}
Before moving on, let us observe that for every $\gep>0$ and $n\in \bbN$, the rescaled Markov chain $(X_i^{\gep,n})_{i\in \bbN_0}$ is itself an irreducible random walk that is supported on the appropriate sub-lattice of $({\bbZ^d}/{a_n})$, thus it satisfies Assumptions (1) and (2) from Section~\ref{sec:LDP}. The relevance of Assumption (3) therein shall be discussed when completing the proof of the main result of this section, that is:
\begin{theorem}
\label{cor:PairLDP-UB-scaled}
For any closed set $F$ in $\tilde\cX^{(2)}$ and $\gep>0$, 
\beq
\limsup_{n\to\infty} \frac{a_n^2}{n} \log \bP(\tilde L_{n,\gep}^{(2)} \in F) \leq- \frac{1}{\gep}  \inf_{\xi\in F} \tilde J_{\gep/d}^{(2)}(\xi).
\eeq
 where $\tilde J_\gep^{(2)}$ is defined as in \eqref{eq:ratefct} with $\pi$ replaced by $\pi_\gep$, that is the Brownian semi-group at time $\gep$.
\end{theorem}
\begin{proof}[Proof of Theorem~\ref{cor:PairLDP-UB-scaled}]
It turns out that only the proof and the statement of Lemma~\ref{lem:PairLDP-UB-1} need to be adapted. The remaining statements are about the rate function rather than the Markov chain at hand. To that end we define
	\beq
	\pi_{n,\gep}^{(2)}g(x,y)= \bE\Big[g(y,y+X_1^{\gep,n})\Big]\,,\qquad\text{and}\qquad
	\pi_{\gep}^{(2)}g(x,y)= \bE\Big[g(y,y+B_\gep)\Big]\,.
	\eeq
With these notations at hand we define ${\sf F}$ as in Section~\ref{sec:ub} but with $\pi^{(2)}$ replaced by $\pi_\gep^{(2)}$. Then, defining $M:=\frac{n}{\ell}=\tfrac{n}
{\lfloor \gep a_n^2\rfloor}$ we show that
\beq
\limsup_{n\to\infty} \frac1M \log \bE \exp(M \tilde {\sf F}(u,c,R, \tilde L_{n,\gep}^{(2)})) \le 0\,.
\eeq
Note that the result would be immediate from the proof of Lemma~\ref{lem:PairLDP-UB-1} if $\pi_\gep^{(2)}$ would be replaced by the transition kernel $\pi_{n,\gep}^{(2)}$ of $X^{\gep,n}$.
Following the proof of Lemma~\ref{lem:PairLDP-UB-1}, we write
\beq\label{eq:prod-decomp}
\prod_{i=1}^{M}\frac{g(X_{i-1}^{\gep,n}, X_i^{\gep,n})}{\pi_\gep^{(2)}g(X_{i-1}^{\gep,n}, X_i^{\gep,n})}
= \prod_{i=1}^{M}\frac{g(X_{i-1}^{\gep,n}, X_i^{\gep,n})}{\pi_{n,\gep}^{(2)}g(X_{i-1}^{\gep,n}, X_i^{\gep,n})}\prod_{i=1}^{M}\frac{\pi_{n,\gep}^{(2)}g(X_{i-1}^{\gep,n}, X_i^{\gep,n})}{\pi_{\gep}^{(2)}g(X_{i-1}^{\gep,n}, X_i^{\gep,n})}\,.
\eeq
Since (i) $g$ is positive, continuous and constant outside of a compact set and (ii) $X_1^{\gep,n}$ converges in law to $B_{\gep/d}$ as $n\to\infty$, it follows that
\beq
\label{eq:unif-conv-g}
\lim_{n\to\infty}\sup_{x,y\in \R^d} \left|\frac{\pi_{n,\gep}^{(2)}g(x,y)}{\pi_{\gep/d}^{(2)}g(x,y)}-1\right| = 0\,.
\eeq 
The variable $x$ in the supremum plays no actual role and uniformity in $y$ can be deduced from the uniform continuity of $g$ and a coupling for which $X_1^{\gep,n}$ converges  to $B_\gep$ almost surely as $n\to\infty$. The existence of such a coupling is guaranteed by Skorohod's representation theorem~\cite[Chapter 1, Theorem 6.7]{BillingBookCvg}. The convergence in~\eqref{eq:unif-conv-g} allows to control the rightmost factor in~\eqref{eq:prod-decomp}. The first factor on the right-hand side can be dealt with as in the proof of Lemma~\ref{lem:PairLDP-UB-1}, provided that a tightness property similar to~\eqref{eq:tails} holds. This is indeed the case, since
\beq
\bP\big(\sup_{1\le i \le M} |X_i^{\gep,n}|\ge \rho'_M\big)
=\bP\big(\sup_{1\le i \le M} |X_{i\ell}|\ge a_n\rho'_M\big)
\le \bP\big(\sup_{1\le i \le n} |X_i|\ge a_n\rho'_M\big),
\eeq
which decreases super-exponentially fast, as $n\to\infty$, if we pick $\rho'_M := \rho_n / a_n$, with $\rho_n$ a diverging sequence which ensures tightness of the underlying random walk $(X_i)_{i\in\bbN_0}$.
\end{proof}

\section{From the pair empirical measure to the empirical measure}\label{sec:contraction}\label{sec:contraction}
Consider a Markov chain $(X_i)_{i\in \N_0}$ satisfying the same assumptions as in Section~\ref{sec:LDP} and define the empirical measure $L_n$ of $(X_i)_{i\in\N_0}$ as in~\eqref{def:empr-meas}.
Note that $L_n$ is simply the second marginal of the pair empirical measure $L_n^{(2)}$ defined in~\eqref{def:pair-empr-meas}.
We further denote by $\tilde L_n$ the orbit of $L_n$ in $\tilde \cM_1(\R^d)$, the space defined in~\cite[Section 2]{MV2016}, see Remark~\ref{rmk:not-the-same} for the difference between $\tilde \cM_1^{(2)}$ and $\tilde \cM_1(\R^{2d})$. We moreover denote by $\tilde\cX$ the compactification of $\tilde\cM_1(\R^d)$, equipped with the metric $\bfD$, see~\cite[Section 3]{MV2016} for details or Appendix~\ref{sec:a} for a short overview. Given a sub-probability measure $\mu\in\cM_{\leq 1}(\R^d\times\R^d)$ we denote by $\Pr(\mu)$ its projection onto the second marginal (of course, the sequel applies to the projection onto the first marginal as well). We then define a projection map $\tilde{\Pr}:\tilde \cX^{(2)}\to \tilde\cX$ by first defining 
\begin{equation}
	\tilde \Pr (\tilde \mu)= \{\Pr(\mu)*\delta_x\,:\, x\in \R^d\}
\end{equation}
for $\tilde\mu\in \tilde\cM_{\leq 1}(\R^d\times\R^d)$, then
\begin{equation}
\tilde \Pr(\xi)=\{\tilde{\Pr}(\tilde\alpha_i)\}_{i\in I}
\end{equation}
for $\xi= \{\tilde\alpha_i\}_{i\in I}\in\tilde \cX^{(2)}$. The goal of this section is to derive a Large Deviation principle for $\tilde L_n$ from one of $\tilde L_n^{(2)}$, which is done in Theorem~\ref{thm:LDPempirical} below. By the contraction principle it would be sufficient to check continuity of the projection map. This unfortunately turns out to be false as the following counter-example shows. 
Consider the sequence of measures given by $\mu_n=\delta_{(n,0)}$. Then, on the one hand we have that for any $f\in \cF_k^{(2)}$
\begin{equation}
	\int f(u_1, v_1, \ldots, u_k, v_k)\prod_{1\leq i\leq k}\delta_{(n,0)}(\dd u_i, \dd v_i)
	= f(n,0,\ldots, n,0)\,.
\end{equation}
Since $f\in\cF_k^{(2)}$ the latter quantity converges to zero as $n\to\infty$. This shows that the sequence $\tilde\mu_n$ converges to $\emptyset$ in $(\tilde\cX^{(2)},{\bf D}_2)$.
On the other hand, 
\begin{equation}
	\tilde \Pr(\tilde\mu_n)= \{\delta_x\,:\, x\in\R^d\}\,,
\end{equation}
which is a constant sequence and therefore converges to $ \{\delta_x\,:\, x\in\R^d\}$. In particular,
\begin{equation}\label{eq:c-ex}
	\lim_{n\to\infty} \tilde \Pr(\tilde\mu_n)\neq \tilde \Pr(\lim_{n\to\infty}\tilde\mu_n)\,,
\end{equation}
which shows that $\tilde \Pr$ is not continuous and we cannot apply the standard contraction principle. 
There are of course generalizations of the classical contraction principle, however it is not clear to us how to use them in the present context. The key to still obtain a Large Deviation principle is via a smoothing of the projection map. More precisely, for $A>0$, we define  $\phi_A:\R^d\to [0,1]$ to be a smooth function such that
\begin{equation}
\begin{aligned} \label{eq:def-phi-n}
\phi_A(x)=\begin{cases}
1\,, & |x|\leq  A\,,\\
0\,, &|x|\geq  2A\,.
\end{cases}
\end{aligned}
\end{equation}
We then define for any sub-probability measure $\mu\in\cM_{\leq 1}(\R^d\times\R^d)$
\begin{equation}
({\Pr}_A\mu)(B)=\int_{v\in B}\,\int_{u\in\R^d}\,\phi_A(u-v)\mu(\dd u,\dd v)\,.
\end{equation}
As the reader may check,
\begin{equation}
{\Pr}_A(\mu * \gd_{(x,x)}) = ({\Pr}_A \mu) * \gd_x, \qquad \ x\in \bbR^d,
\end{equation}
so ${\Pr}_A$ can be naturally extended to a map $\tilde \Pr_A\colon \tilde\cX^{(2)}\mapsto \tilde\cX$. We then first observe that:
\begin{proposition}\label{pr:cont-smooth-proj}
The map $\tilde \Pr_A$ is continuous from $(\tilde \cX^{(2)}, \bfD^{(2)})$ to $(\tilde \cX,\bfD)$.
\end{proposition}
\begin{proof}[Proof of Proposition~\ref{pr:cont-smooth-proj}]
Given $\xi\in\tilde\cX^{(2)}$ and $f\in \cF_k$ (in particular $f$ is bounded), we have that
\begin{equation}
\sum_{\alpha\in\xi}\int f(v_1,\ldots, v_k)\prod_{i=1}^k({\Pr}_A\alpha)(\dd v_i)
= 	\sum_{\alpha\in\xi}\int f(v_1,\ldots, v_k)\prod_{i=1}^k
\phi_A(u_i-v_i)\alpha(\dd u_i, \dd v_i)\,.
\end{equation}
To conclude it only remains to observe that 
\begin{equation}
\label{eq:vanish0}
V_A\colon (u_1, v_1, \ldots, u_k, v_k) \mapsto f(v_1, \ldots, v_k)\prod_{i=1}^k \phi_A(u_i-v_i)\in \cF_k^{(2)}\,.
\end{equation}
Indeed, $V_A$ is clearly continuous and it is also vanishing: if we write, for convenience,
\beq
(u_1, v_1, \ldots, u_k, v_k) = (x_1, \ldots, x_{2k}),
\eeq
and assume that $\max\{|x_i - x_j| \colon i\neq j\}\ge M$, then one of the four following cases indeed occurs: 
\begin{enumerate}
\item $|u_i-v_i|\ge M$ for some index $i$;
\item $|v_i - v_j|\ge M$ for some index pair $i\neq j$;
\item $|u_i - v_j|\ge M$ for some index pair $i\neq j$;
\item $|u_i - u_j|\ge M$ for some index pair $i\neq j$.
\end{enumerate}
In the first case,  we write
\beq
V_A(u_1, v_1, \ldots, u_k, v_k) \le \|f\|_{\infty} \sup\{ \phi_A(x)\colon |x|\ge M\},
\eeq
which is eventually zero when $M$ is large enough, since $\phi_A$ is compactly supported. In the second case, we write
\beq \label{eq:vanish1}
V_A(u_1, v_1, \ldots, u_k, v_k) \le \sup\Big\{f(v_1, \ldots, v_k)\colon \max_{i\neq j} |v_i-v_j|\ge M\Big\},
\eeq
which decreases to zero as $M\to \infty$, since $f$ itself is vanishing. In the third case, we use the triangular inequality $|u_i - v_j| \le |u_i - v_i| + |v_i - v_j|$ so that we are back to one of the first two cases with $M/2$ instead of $M$. In the fourth case, we use the triangular inequality $|u_i - u_j| \le |u_i - v_j| + |v_j - u_j|$ so that we are back to either the first or third case with $M/2$ instead of $M$. Hence, the proof is complete.
\end{proof}
To continue we further define the diagonal with width $A$ in $\R^d\times\R^d$ via
\beq
\cD_A:=\{(u,v)\in\R^d\times\R^d:\, |u-v|\leq A\}\,,
\eeq
along with
\beq
\cM(A,\delta):=\{\xi\in\tilde\cX^{(2)}\,:\, \xi(\cD_A)\geq 1-\delta\}\,,
\eeq
where
\beq
\xi(\cD_A)=\sum_{\alpha\in \xi}\alpha(\cD_A)\,.
\eeq
The latter is well defined, i.e., it does not depend on the choice of the orbits. In Lemmas~\ref{lem:MAclosed} and~\ref{lem:MAcontained} below, we make two observations on the set $\cM(A,\delta)$ that shall prove useful in the sequel.
\begin{lemma}\label{lem:MAclosed}
	$\cM(A,\delta)$ is a closed subset of $\tilde\cX^{(2)}$.
\end{lemma}
\begin{proof}[Proof of Lemma~\ref{lem:MAclosed}]
	Let $(\xi_m)$ be a sequence in $\cM(A,\delta)$ converging with respect to the metric ${\bf D}_2$ to some element $\xi$. We need to show that $\xi\in\cM(A,\delta)$.
	To that end we define a collection of smooth functions $\phi_{A_1, A_2}\colon [0,\infty)\mapsto [0,1]$, where $A< A_1 < A_2$, with the following properties
	\begin{equation}
	\begin{aligned} \label{eq:def-phi-M}
	\phi_{A_1,A_2}(x)=\begin{cases}
	1\,, & A_1 < x < A_2\,,\\
	0\,, &x < \tfrac12(A+A_1) \text{ or } x> 2A_2\,,
	\end{cases}
	\end{aligned}
	\end{equation}
	and such that $\phi_{A_1,A_2}$ monotonically increases to the indicator of $(A,\infty)$ (pointwise) as $A_1$ and $A_2$ tend to $A$ and $+\infty$ respectively.
	Given any sub-probability measure $\alpha\in\cM_{\leq 1}(\R^d\times\R^d)$ we then see that by the monotone convergence theorem
	\beq
	\alpha (\cD_A^c)=\sup_{A <  A_1 < A_2}\int \phi_{A_1, A_2}(|u-v|)\alpha(\dd u,\dd v).
	\eeq
	We now introduce another collection of smooth functions $\varphi_B\colon [0,\infty) \to [0,1]$ such that $\varphi_B(x)= 1$ if $x\le B$ and $\varphi_B(x)= 0$ if $x>2B$, and such that $\varphi_B$ monotonically increases (pointwise) to the constant function equal to one as $B$ tends to infinity. Again, by the monotone convergence theorem:
	\beq
	\alpha (\cD_A^c)=\suptwo{A <  A_1 < A_2}{B>0} I(\ga; A_1, A_2, B).
	\eeq
	where
	\beq
	I(\ga; A_1, A_2, B) =\int \phi_{A_1, A_2}(|u_1-v_1|)
	\varphi_B(|u_1-u_2|)
	\varphi_B(|v_1-v_2|)
	\alpha(\dd u_1,\dd v_1)\alpha(\dd u_2,\dd v_2).
	\eeq
	We finally obtain:
	\beq
	\ba
	\sum_{\ga \in \xi} \alpha (\cD_A^c) &= \suptwo{A <  A_1 < A_2}{B>0} 
	\sum_{\ga \in \xi} I(\ga; A_1, A_2, B)\\
	& = \suptwo{A <  A_1 < A_2}{B>0} \lim_{m\to \infty}
	\sum_{\ga \in \xi_m} I(\ga; A_1, A_2, B)\\
	&\le \liminf_{m\to \infty}
	\sum_{\ga \in \xi_m} \alpha (\cD_A^c) \le \gd.
	\ea
	\eeq
	On the second line we used the convergence of $\xi_m$ to $\xi$ along with the fact that the function
	\beq
	(u_1,v_1,u_2,v_2)\in (\bbR^d)^4  \mapsto \phi_{A_1, A_2}(|u_1-v_1|)
	\varphi_B(|u_1-u_2|)
	\varphi_B(|v_1-v_2|)
	\eeq
	is smooth, translation invariant and vanishing, since $\phi_{A_1, A_2}$ and $\varphi_B$ are compactly supported. This completes the proof.
\end{proof}
\begin{lemma}\label{lem:MAcontained}
	For every $A, \gd >0$,
	\beq
	\cM(A,\delta)\subseteq \{\xi\in\tilde\cX^{(2)}\,:\,{\bf D}(\tilde \Pr(\xi), \tilde \Pr_A(\xi))\leq 2\delta \}\,.
	\eeq
\end{lemma}
\begin{proof}[Proof of Lemma~\ref{lem:MAcontained}] 
We first estimate the distance between $\tilde \Pr_A$ and $\tilde \Pr$. Given any function $f\in\cF_k$ and any sub-probability measure $\mu\in\cM_{1}(\R^d\times\R^d)$,
		\begin{equation}
		\begin{aligned}
		\int &f(v_1,\ldots, v_k)\Big[\prod_{i=1}^k({\Pr}\mu)(\dd v_i)-\prod_{i=1}^k ({\Pr}_A\mu)(\dd v_i)\Big]\\
		&=\sum_{j=1}^k\int f(v_1,\ldots, v_k)\big[({\Pr}\mu-{\Pr}_A\mu)(\dd v_j)\big]\prod_{i<j}({\Pr}\mu)(\dd v_i)\prod_{i>j}({\Pr}_A\mu)(\dd v_i)\\
		&= \sum_{j=1}^k\int f(v_1,\ldots, v_k)\big[(1-\phi_A)(u_j-v_j)\mu(\dd u_j,\dd v_j)\big]\prod_{i<j}({\Pr}\mu)(\dd v_i)\prod_{i>j}({\Pr}_A\mu)(\dd v_i)\,.
		\end{aligned}
		\end{equation}
		By~\eqref{eq:def-phi-n}, the above can be  bounded from above (in absolute value) by
		\begin{equation}
		k\|f\|_\infty \times  \mu(\{(u,v)\in \bbR^d\times \bbR^d\,:\, |u-v|\ge A\})\,.
		\end{equation}
		From the definition of the metric ${\bf D}$ in~\eqref{eq:metricD} and~\cite[Section 3, Eq. (17)]{MV2016}, we may deduce that
		\begin{equation}\label{eq:off_diag_est}
		{\bf D}(\tilde \Pr (\xi), \tilde \Pr_A(\xi)) \le 2\times \xi(\cD_A^c).
		\end{equation}
\end{proof}

Our final key observation before we state the main result of this section is the following:
\begin{proposition}\label{ass:diag}
	The sequence $(\tilde L_n^{(2)})$ is diagonally exponentially tight, i.e. for all $\delta>0$
	\beq
	\lim_{A\to\infty}\limsup_{n\to\infty} \frac1n \log \bP(\tilde L_n^{(2)}\notin \cM(A, \delta))=-\infty\,.
	\eeq	
\end{proposition}
\begin{proof}[Proof of Proposition~\ref{ass:diag}]
Fix $\delta>0$ and note that
\beq
\bP(\tilde L_n^{(2)}\notin \cM(A, \delta))=\bP(\mathrm{Bin}(n, \bP(|X_1|\geq A))>n\delta)\,,
\eeq
where we recall that $X_1$ is distributed as a single step of the underlying Markov chain. Let us abbreviate $p_A:= \bP(|X_1|\geq A)$.
 Using Chernov's bound we may write for all $\lambda>0$
\begin{equation}
	\begin{aligned}
		\bP(\mathrm{Bin}(n, \bP(|X_1|\geq A))>n\delta)
		&\leq \exp\Big(-n\Big[\lambda\delta -\log\Big(1+p_A(e^\lambda -1)\Big)\Big]\Big)\\
		&\leq  \exp\Big(-n\Big[\lambda\delta -p_A(e^\lambda -1)\Big]\Big)\,.
	\end{aligned}	
	\end{equation}
Choosing $\lambda =\log(p_A^{-1})$ the right-hand side above becomes
\begin{equation}
\exp\Big(-n\Big[\log(p_A^{-1})\delta -1+p_A\Big]\Big)\,.
\end{equation}
To conclude it suffices to note that $\lim_{A\to\infty}\log(p_A^{-1}) =+\infty$.
\end{proof}

We can now formulate the following result:
\begin{theorem}
	\label{thm:LDPempirical}
Under the assumptions (1) to (3) in Section~\ref{sec:LDP}, the sequence $(\tilde L_n)$ satisfies a Large Deviation principle in $\tilde\cX$ with rate $n$ and rate function
\beq\label{eq:def-tildeJ}
\tilde J(\xi)=\sum_{\alpha\in\xi} J(\alpha)\,,
\eeq
where
\beq
J(\alpha)=  \inf\{J^{(2)}(\mu)\colon \Pr(\mu)= \ga\}\,.
\eeq
\end{theorem}
Before we proceed with the proof, let us remark that the rate function $\tilde J$ in~\eqref{eq:def-tildeJ} coincides with what we would get from a blind application of the contraction principle, that is~\eqref{eq:tildeJ} in the following lemma:
\begin{lemma}\label{lem:tildeJ} Let $\tilde J$ be defined as in~\eqref{eq:def-tildeJ} and $\xi\in\tilde \cX$. Then,
\beq\label{eq:tildeJ}
\tilde J(\xi) = \inf\{\tilde J^{(2)}(\gt) \colon \tilde \Pr(\gt)= \xi\}.
\eeq
Moreover, $\tilde J$ is a lower semi-continuous function.
\end{lemma}

\begin{proof}[Proof of Lemma~\ref{lem:tildeJ}]
Write $\xi=\{\ga_i:\, i\in I\}$. Note that for every $\theta$ such that $\tilde \Pr(\theta) = \xi$,
\beq
\sum_{i\in I} J(\ga_i) \le \tilde J^{(2)}(\theta).
\eeq
Indeed, writing $\theta = \{\nu_i \colon i \in I\}$ we have $\tilde \Pr(\nu_i) = \ga_i$ for every $i\in I$, hence $\tilde J^{(2)}(\theta) = \sum_{i\in I} J^{(2)}(\nu_i) \ge \sum_{i\in I} J(\ga_i)$. Therefore, $\tilde J(\xi) \ge \sum_{i\in I} J(\ga_i)$.
We now proceed with the upper bound. We can assume that the index set $I$ is a subset of the natural numbers or coincides with it. Fix $\gep>0$. For all $i\in I$, there exists $\nu_i^{(\gep)}$ such that $\Pr(\nu_i^{(\gep)})= \ga_i$ and $|J^{(2)}(\nu_i^{(\gep)})-J(\ga_i)|\leq \gep 2^{-i}$. Define $\theta^{(\gep)} = \{\nu_i^{(\gep)}, i\in I\}$. Then $\tilde \Pr \theta^{(\gep)} = \xi$, from which it follows that:
\beq
\tilde J(\xi) \le \tilde J^{(2)}(\theta^{(\gep)}) = \sum_{i\in I} J^{(2)}(\nu_i^{(\gep)}) \leq \sum_{i\in I} J(\ga_i) + \gep\,.
\eeq

\par To show that $\tilde J$ is lower semi-continuous, we first remark that the function $J$ appearing in~\eqref{eq:def-tildeJ} may be written as
\beq
J(\ga) = \sup_v \int \log \Big(\frac{v}{\pi v}\Big) \dd \ga,
\eeq
where the supremum runs over all bounded and Borel-measurable (or continuous and compactly supported) functions $v\colon \bbR^d \to [1, +\infty)$. This standard fact may be inferred, for instance, from~\cite[Lemma 2.1 of the first paper]{DV75} and~\cite[Theorem 2.1]{DV76}. From this variational representation, one can
mimick the proof of~Proposition~\ref{pr:lsc}, that is itself inspired by~\cite[Lemma 4.2]{MV2016}.
\end{proof}
\begin{proof}[Proof of Theorem~\ref{thm:LDPempirical}]We start with the proof of the upper bound.
Let $F\subseteq \tilde\cX$ be a closed set. We can then estimate
\beq
\ba
\bP(\tilde L_n\in F)&\leq 
\bP(\tilde L_n \in F, \tilde L_n^{(2)}\in \cM(A,\delta))
+ \bP(\tilde L_n^{(2)}\notin \cM(A,\delta))\\
&\leq \bP(\tilde L_n^{(2)}\in\tilde{\Pr}_A^{-1}(F_{2\delta}), \tilde L_n^{(2)}\in \cM(A,\delta)) + \bP(\tilde L_n^{(2)}\notin \cM(A,\delta))\,.
\ea
\eeq
Here, for any $a>0$ we defined the closed set
\beq
F_a=\{\xi\in \tilde\cX\,:\, {\bf D}(\xi, F)\leq a\}\,,
\eeq
and the second inequality above follows from Lemma~\ref{lem:MAcontained}. Since $F_{2\delta}$ is closed and $\tilde\Pr_A$ is continuous it follows that $\tilde{\Pr}_A^{-1}(F_{2\delta})$ is closed. 
For every $A,\gd>0$, we denote:
\beq
	-C(A,\delta):=\limsup_{n\to\infty} \frac1n \log \bP(\tilde L_n^{(2)}\notin \cM(A, \delta))\,.
\eeq
Using Lemma~\ref{lem:MAclosed} and the Large Deviation principle for the pair empirical measure we can conclude that
\beq
\ba
\limsup_{n\to\infty} \frac1n \log \bP(\tilde L_n\in F)
&\leq -\Big(C(A,\delta)\wedge \inf_{\xi\in\cM(A,\delta)\cap \tilde\Pr_A^{-1}(F_{2\delta})}\tilde J^{(2)}(\xi)\Big)\\
&\leq -\Big(C(A,\delta)\wedge \inf_{\xi\in\cM(A,\delta)\cap \tilde{\Pr}^{-1}(F_{4\delta})}\tilde J^{(2)}(\xi)\Big)\\
&\leq -\Big(C(A,\delta)\wedge \inf_{\xi\in \tilde{\Pr}^{-1}(F_{4\delta})}\tilde J^{(2)}(\xi)\Big)
\ea
\eeq
where the second-to-last inequality follows from Lemma~\ref{lem:MAcontained}. Note that by Proposition~\ref{ass:diag} the constant $C(A,\delta)$ tends to infinity when $A$ tends to infinity. Therefore, we see that for any $\delta>0$
\beq
\limsup_{n\to\infty} \frac1n \log \bP(\tilde L_n\in F)
\leq -\inf_{\xi\in F_{\delta}}\tilde J(\xi)\,,
\eeq
where
\beq
\label{eq:def_tildeJ}
\tilde J(\xi) := \inf\{\tilde J^{(2)}(\theta) \colon \tilde\Pr(\theta) = \xi\}.
\eeq
The fact that
\beq
\tilde J(\xi) = \sum_{\ga\in \xi} J(\ga),
\eeq
and that $\tilde J$ is lower semi-continuous are consequences of Lemma~\ref{lem:tildeJ} above.
To finish the derivation of the Large Deviation upper bound we need to send $\delta$ to $0$. Since $F_\delta$ is a closed set of a compact space it is compact itself. Since $\tilde J$ is lower semi-continuous there exists $\xi_\delta\in F_\delta$ minimizing $\tilde J$ over $F_\delta$. By compactness of $\tilde\cX$ we can extract a subsequence along which $\xi_\delta$ converges as $\delta\to 0$. For ease of notation we denote this subsequence by $\xi_\delta$ and its limit by $\xi_0$. By the lower semi-continuity of $\tilde J$ we have that
\beq
\tilde J(\xi_0)\leq \liminf_{\delta \to 0}\tilde J(\xi_\delta)\,.
\eeq
Since $\xi\mapsto {\bf D}(\xi, F)$ is continuous it follows that $\xi_0\in F$. Therefore,
\beq
\inf_{\xi \in F}\tilde J(\xi)\leq \tilde J(\xi_0)\leq \liminf_{\delta \to 0}\tilde J(\xi_\delta)\,,
\eeq
which allows to conclude the upper bound.\\ 
\par We now come to the proof of the lower bound. Let $U$ be an open set in $\tilde\cX$ w.r.t the metric $\bfD$. We first show that
\beq\label{eq:lowermain}
\liminf_{n\to\infty} \frac1n \log \bP(\tilde L_n \in U) \ge - \liminf_{A\to \infty} \inf_{\tilde\Pr_A^{-1}(B(\xi,\gd))} \tilde J^{(2)}\,,
\eeq
where $\xi \in U$ and $\gd$ is small enough such that $B(\xi,\gd) \subseteq U$. To see that, we can write using Lemma~\ref{lem:MAcontained},
\beq
\ba
\bP(\tilde L_n \in U)
&\geq \bP(\tilde L_n \in B(\xi,\delta), \tilde L_n^{(2)}\in \cM(A,\delta/4))\\
&\geq \bP(\tilde\Pr_A(\tilde L_n^{(2)}) \in B(\xi,\delta/2),\tilde L_n^{(2)} \in \cM(A,\delta/4))\\
&\geq \bP(\tilde\Pr_A(\tilde L_n^{(2)}) \in B(\xi,\delta/2))- \bP(\tilde L_n^{(2)}\notin\cM(A,\delta/4))\,.
\ea
\eeq
Applying the Large Deviation Principle for $\tilde L_n^{(2)}$ and Proposition~\ref{ass:diag} allows to conclude~\eqref{eq:lowermain}.
	To continue note that by Equation~\eqref{eq:off_diag_est} for any $\xi^{(2)}\in\tilde\cX^{(2)}$ one has that ${\bf D}(\tilde\Pr(\xi^{(2)}), \tilde\Pr_A(\xi^{(2)}))\leq 2 \xi^{(2)}(\cD_A^c)$, which tends to zero as $A\to\infty$. Indeed, note that
	\beq
	\xi^{(2)}(\cD_A^c) =\sum_{\ga\in\xi^{(2)}}\alpha(\cD_A^c)\,,
	\eeq
	and each term in the sum tends to zero as $A\to\infty$. Since moreover, $\alpha(\cD_A^c)\leq \alpha(\R^d\times\R^d)$ and the latter is summable over all the elements in $\xi$, we can conclude using the dominated convergence theorem.
	 Hence, given any $\xi^{(2)}\in\tilde\cX^{(2)}$ such that $\tilde\Pr(\xi^{(2)})=\xi$ there exists $A_0>0$ such that for all $A\geq A_0$ one has that $\tilde\Pr_A(\xi^{(2)})\in B(\xi,\delta)$. We define
	\beq
	\cC_A=\{\xi^{(2)}\in\tilde\cX^{(2)}:\, \tilde\Pr(\xi^{(2)})=\xi, \tilde{\Pr}_A(\xi^{(2)})\in B(\xi,\delta)\}\,.
	\eeq
	It follows directly from the definition of $\cC_A$ that
	\beq
	\inf_{\tilde\Pr_A^{-1}(B(\xi,\gd))} \tilde J^{(2)}\leq \inf_{\cC_A}  \tilde J^{(2)}\,.
	\eeq
	Our goal is to prove that 
	\beq\label{eq:toshow}
	\limsup_{A\to\infty} \inf_{\cC_A}  \tilde J^{(2)}\leq \inf_{\Pr^{-1}(\{\xi\})}\tilde J^{(2)}\,,
	\eeq
	which would yield the result.
	Let $\gep>0$, and fix $\xi_\gep^{(2)}\in\tilde\cX^{(2)}$ such that $\Pr(\xi_\gep^{(2)})=\xi$ and such that additionally
	\beq
	\inf_{\Pr^{-1}(\{\xi\})}\tilde J^{(2)}\geq \tilde J^{(2)}(\xi_\gep^{(2)})-\gep\,.
	\eeq
	By the above, there exists $A_0>0$ such that for all $A\geq A_0$ one has that $\xi_\gep^{(2)}\in\cC_A$. This allows us to conclude that
	\beq
	\inf_{\Pr^{-1}(\{\xi\})}\tilde J^{(2)}\geq \tilde J^{(2)}(\xi_\gep^{(2)})-\gep \geq \inf_{\cC_A}\tilde J^{(2)} -\gep\,
	\eeq
	for all $A\geq A_0$. Sending $A$ to infinity and $\gep$ to zero allows us to deduce~\eqref{eq:toshow}. Hence, given $\xi\in\tilde\cX$ and any open set $U$ containing $\xi$ we can conclude that
	\beq
	\liminf_{n\to\infty} \frac1n \log \bP(\tilde L_n \in U) \ge -	\inf_{\tilde\Pr^{-1}(\{\xi\})}\tilde J^{(2)}
	=- \tilde J(\xi)\,,
	\eeq
	and the result follows.
	To finish the proof of Theorem~\ref{thm:LDPempirical} we need to establish the lower semi-continuity of the rate function. This however is the content of Lemma~\ref{lem:tildeJ}. Hence, we can conclude.
\end{proof}
\subsection{Additional observations}
Let us come back to the counter-example provided at the beginning of this section and formulate a few observations that may be of independent interest. Looking back at~\eqref{eq:c-ex}, we remark that the left-hand side therein is in some sense ``larger" (i.e.\ contains more mass) than the right-hand side. We formalize this sort of ``lower semi-continuity'' with the following:

\begin{proposition}\label{pr:proj-lsc}
	Assume that the sequence $(\xi_n)$ in $\tilde \cX^{(2)}$ converges to $\xi$ w.r.t to the metric $\Dtwo$. Then, for every non-negative function $f\in \cF_2$,
	\beq
	\liminf_{n\to \infty} \int f(v_1, v_2) \tilde\Pr \xi_n (\dd v_1) \tilde\Pr \xi_n (\dd v_2) \ge 
	\int f(v_1, v_2) \tilde\Pr \xi (\dd v_1) \tilde\Pr \xi (\dd v_2).
	\eeq
\end{proposition}

\begin{proof}[Proof of Proposition~\ref{pr:proj-lsc}]
	We use the smooth truncation functions from~\eqref{eq:def-phi-n}. 
	Then, for all $A>0$,
	\beq
	\ba
	\int f(v_1, v_2) \tilde\Pr \xi_n (\dd v_1) \tilde\Pr \xi_n (\dd v_2) &=
	\int f(v_1, v_2) \xi_n (\dd u_1, \dd v_1)  \xi_n (\dd u_2, \dd v_2)\\
	&\ge \int f_A(u_1,v_1,u_2, v_2) \xi_n (\dd u_1, v_1)\xi_n (\dd u_2, \dd v_2),
	\ea
	\eeq
	where
	\beq
	f_A(u_1,v_1,u_2, v_2) := f(v_1,v_2)\phi_A(u_1-u_2).
	\eeq
	Repeating the arguments between~\eqref{eq:vanish0} and~\eqref{eq:vanish1}, one can show that $f_A \in \cF_2^{(2)}$. Therefore,
	\beq
	\liminf_{n\to \infty} \int f(v_1, v_2) \tilde\Pr \xi_n (\dd v_1) \tilde\Pr \xi_n (\dd v_2) \ge 
	\int f_A(u_1,v_1,u_2, v_2) \xi (\dd u_1, v_1)\xi (\dd u_2, \dd v_2).
	\eeq
	Letting $A\to\infty$ in the right-hand side above and using the monotone convergence theorem completes the proof.
\end{proof}
The proof above indicates that the concept of {\it diagonal tightness} introduced above should be sufficient to retrieve continuity of the projection map. This is the purpose of the below proposition, in which we use the following notation: if $\xi = \{\tilde \ga_i\}_{i\in I}\in \tilde \cX^{(2)}$ and if $A$ is a {\it diagonally shift invariant} Borel subset of $\bbR^d$ (meaning that $A+(x,x)=A$ for every $x\in\bbR^d$) then
\beq
\xi(A) := \sum_{i\in I} \ga_i(A),
\eeq
where the choice of $\ga_i$ in the orbit $\tilde \ga_i$ is irrelevant. We may now state:
\begin{proposition}\label{pr:proj-diag}
	Assume that the sequence $(\xi_n)$ in $\tilde \cX^{(2)}$ converges to $\xi$ w.r.t to the metric $\Dtwo$. Assume in addition that this sequence is \emph{diagonally tight}, meaning that
	\beq
	\lim_{M\to \infty}\sup_{n\in\bbN} \xi_n(\{(u,v)\colon |u-v| >M\})=0.
	\eeq
	Then, the sequence $\tilde \Pr(\xi_n)$ converges to $\tilde \Pr(\xi)$ w.r.t. to the metric ${\bf D}$.
\end{proposition}

Let us remark that any \emph{finite} collection of elements in $\tilde \cX^{(2)}$ is diagonally tight, since any probability measure on the Polish space $\bbR^d$ is tight and the total mass of an element in $\tilde \cX^{(2)}$ is bounded by one.

\begin{proof}[Proof of Proposition~\ref{pr:proj-diag}]
	We write
	\beq
	{\bfD}(\tilde \Pr(\xi_n),\tilde \Pr(\xi))\leq {\bfD}(\tilde \Pr(\xi_n), \tilde \Pr_A(\xi_n)) + {\bfD}(\tilde \Pr_A(\xi_n), \tilde \Pr_A(\xi)) + {\bfD}(\tilde \Pr_A(\xi), \tilde \Pr(\xi))\,.
	\eeq
	The first term can be estimated from above by $2\times \xi_n(\cD_A^c)$ by~\eqref{eq:off_diag_est}, which tends to zero as $A$ tends to infinity, uniformly in $n$, from our assumption. The second term tends to zero as $n\to\infty$ for every $A$ by the continuity of $\tilde \Pr_A$ proven in Proposition~\ref{pr:cont-smooth-proj}. Finally, the last term above tends to zero as $A$ tends to infinity, since by~\eqref{eq:off_diag_est} it can be estimated from above by $2\times \xi(\cD_A^c)$. This concludes the proof.
\end{proof}

\appendix
\section{On the Mukherjee-Varadhan topology}\label{sec:a}
This section collects the most important ingredients about the topology introduced by Mukherjee and Varadhan in~\cite[Section 3]{MV2016}.
Similarly to Section~\ref{sec:topology} we write $\cM_1= \cM_1(\bbR^d)$ for the space of probability measures on $\bbR^d$ and $\cM_{\le1} = \cM_{\le1}(\bbR^d)$ for the space of sub-probability measures on $\bbR^d$. For any $\alpha\in \cM_{\le 1}$ we define its orbit $\tilde\alpha=\{\alpha*\delta_x:\, x\in\R^d\}$, and we identify two measures if they have the same orbit. This introduces an equivalence relation on  $\cM_{\le 1}$ of which $\tilde \cM_{\le 1}$ denotes the corresponding quotient space. For $k\ge 2$, define $\cF_k$ as the space of continuous functions $f\colon (\bbR^d)^k \mapsto \bbR$ that are {\it translation invariant}, i.e.
\beq
f(u_1+x, \ldots, u_k +x) = f(u_1,\ldots, u_k), \quad \forall x,u_1,\ldots, u_k \in \bbR^d,
\eeq
and {\it vanishing at infinity}, in the sense that
\beq
\lim_{\max_{i\neq j} |u_i-u_j|\to \infty} f(u_1, \ldots, u_k) = 0.
\eeq
For $k\ge 2$, $f\in \cF_k$ and $\ga \in \cM_{\le 1}$, write
\beq
\label{eq:defLambda0}
\gL(f, \ga) := \int f(u_1,\ldots, u_{k}) \prod_{1\le i\le k} \ga(\dd u_i),
\eeq
which only depends on $\tilde \ga$. 
Define
\beq
\cF := \bigcup_{k\ge 2} \cF_k,
\eeq
for which there exists a countable dense set (under the uniform metric) denoted by
\beq
\{f_r(u_1,\ldots, u_{k_r}),\, r\in \bbN\}.
\eeq
One can then define the following set of empty, finite, or countably infinite collections of sub-probability measure orbits:
\beq
\tilde\cX := \Big\{
\xi = \{\tilde \ga_i\}_{i\in I} \colon \tilde \ga_i \in \tilde \cM_{\le 1},\ \sum_{i\in I} \ga_i(\bbR^d) \le 1\Big\}\,,
\eeq
see Equation (16) in~\cite{MV2016}.
For every $\xi_1, \xi_2\in \tilde \cX$, one can then define the following metric on $\tilde\cX$:
\beq\label{eq:metricD}
\mathbf{D}(\xi_1, \xi_2) := \sum_{r\ge 1} \frac{1}{2^r} \frac{1}{1+\|f_r\|_{\infty}}
\Big|%
\sum_{\tilde\ga\in\xi_1} \gL(f_r, \ga)- \sum_{\tilde\ga\in\xi_2} \gL(f_r, \ga)
\Big|.%
\eeq
It was then shown in~\cite{MV2016} that the space $\tilde\cX$ equipped with $\mathbf{D}$ is a compact metric space and that $\tilde \cM_1$ is dense in $\tilde\cX$. 
\section*{Acknowledgements}
D.E.~was supported by the National Council for Scientific and Technological Development - CNPq via a Bolsa de Produtividade 303520/2019-1 and 303348/2022-4 and via a Universal Grant (Grant Number 406001/2021-9). D.E. moreover acknowledges support by the Serrapilheira Institute (Grant Number Serra-R-2011-37582). Finally, D.E. was partially
supported by FAPESB (EDITAL FAPESB Nº 012/2022 - UNIVERSAL - NºAPP0044/2023). FAPESB is the Bahia Research Foundation. J.P. is supported by the ANR-22-CE40-0012 LOCAL. Both authors would like to thank an anonymous referee for helpful remarks on a previous version of this paper.\\

\bibliographystyle{abbrv}
\bibliography{SwissCheese}

\end{document}